\documentclass[10 pt]{amsart}
\usepackage{amsmath,amsthm,amssymb,mathtools}
\usepackage{xcolor}
\usepackage[shortlabels]{enumitem}
\definecolor{darkspringgreen}{rgb}{0.09, 0.45, 0.27}
\usepackage[colorlinks=true,citecolor=darkspringgreen,linkcolor=darkspringgreen]{hyperref}
\usepackage[capitalize]{cleveref}
\usepackage{autonum}
\usepackage{subcaption}
\usepackage{bbm}
\usepackage{tikz}
\usepackage{mathabx} 

\usetikzlibrary{intersections}

\newcommand{\dd}{\mathrm{d}}
\newcommand{\bP}{\mathbb{P}}
\newcommand{\cP}{\mathcal{P}}

\newcommand{\N}{\mathbb{N}}
\newcommand{\R}{\mathbb{R}}
\newcommand{\C}{\mathbb{C}}
\newcommand{\E}{\mathbb{E}}
\newcommand{\eps}{\varepsilon}

\allowdisplaybreaks

\DeclareMathOperator{\spec}{\mathrm{spec}}

\DeclareMathOperator*{\esup}{ess\,sup}

\DeclarePairedDelimiter\abs{|}{|}
\DeclarePairedDelimiterX{\inner}[2]{\langle}{\rangle}{#1,#2}
\DeclarePairedDelimiterX{\norm}[1]{\|}{\|}{#1}

\numberwithin{equation}{section}

\newtheorem{thm}{Theorem}[section]

\newtheorem{prop}[thm]{Proposition}
\newtheorem{cor}[thm]{Corollary}
\newtheorem{lemma}[thm]{Lemma}
\theoremstyle{definition}

\newtheorem{ass}[thm]{Assumption}

\newtheorem{ex}[thm]{Example}
\theoremstyle{remark}
\newtheorem{rmk}[thm]{Remark}

\Crefname{cor}{Corollary}{Corollaries}

\title{Separation of time scales in weakly interacting diffusions}
\author[Adams]{Zachary P. Adams}
\address[1]{Freie Universit\"at Berlin \&~Scads.AI Leipzig}
\email{zachary.adams@fu-berlin.de}
\author[Engel]{Maximilian Engel}
\address[2]{University of Amsterdam \& Freie Universit\"at Berlin}
\email{maximilian.engel@fu-berlin.de}
\author[Gvalani]{Rishabh S. Gvalani}
\address[3]{D-MATH, ETH Z\"urich, R\"amistraße 101, 8001 Z\"urich}
\email{rgvalani@ethz.ch}
\date{}

\begin{document}
\begin{abstract}
We study metastable behaviour in systems of weakly interacting Brownian particles with localised, attractive potentials which are smooth and globally bounded. 
In this particular setting, numerical evidence suggests that the particles converge on a short time scale to a ``droplet state'' which is \emph{metastable}, i.e.~persists on a much longer time scale than the time scale of convergence, before eventually diffusing to $0$.  

In this article, we provide rigorous evidence and a quantitative characterisation of this separation of time scales.  Working at the level of the empirical measure, we show that (after quotienting out the motion of the centre of mass) the rate of convergence to the quasi-stationary distribution, which corresponds with the droplet state,  is $O(1)$ as the inverse temperature $\beta \to \infty$. Meanwhile the rate of leakage away from its centre of mass is $O(e^{-\beta})$. Futhermore, the quasi-stationary distribution is localised on a length scale of order $O(\beta^{-\frac12})$. We thus provide a partial answer to a question posed by Carrillo, Craig, and Yao (see~\cite[Section 3.2.2]{CCY19}) in the microscopic setting.

\end{abstract}
\maketitle

\section{Introduction}
\label{sec:introduction}

Consider the following system of weakly interacting diffusions
\begin{equation}
    \dd X_t^i = -\frac{1}{N}\sum_{i=1}^N \nabla W (X_t^i -X_t^j) \, \dd t + \sqrt{2 \beta^{-1}} \dd
 B_t^i \, ,
    \label{eq:particlesystem}
\end{equation}
where $(X_t^i)_{i=1}^N$, $N \in \mathbb{N}$, represent the positions of $N$ exchangeable particles (or agents or spins), taking values in some smooth manifold $\Omega$, $W$ is a smooth even interaction potential with well-behaved growth properties, and $(B_t^i)_{i=1}^N$ are $N$ independent Brownian motions.

Depending on the choice of the interaction potential $W$, the system in \eqref{eq:particlesystem} can exhibit a wide variety of interesting dynamical phenomena. We are particularly interested in the case in which $W$ is attractive (i.e.~$\nabla W (x)\cdot x \geq 0$), globally bounded, and has smooth and globally bounded derivatives of all orders. In this situation the pairwise attraction mediated by $W$ competes with the diffusive behaviour produced by the independent Brownian motions.  This competition is most easily observed  at the level of the thermodynamic limit $N\to \infty$ when $\Omega$ is bounded. Consider the empirical measure $\mu^N_t$ of the system~\eqref{eq:particlesystem}, which is defined as 
\begin{align}
\mu^N_t \coloneqq \frac1N \sum_{i=1}^N\delta_{X_t^i} \, .
\end{align}
Then, assuming $W$ is sufficiently well-behaved, it is well-known (see, for example, \cite{Szn91}) that we have the estimate 
\begin{equation}
\sup_{t \in [0,T]} \E \left[ d_2^2(\mu_t,\mu_t^N)  \right] \lesssim \frac1N \, ,
\end{equation}
for any $T<\infty$, where $d_2(\cdot,\cdot)$ is the $2$-Wasserstein distance and $\mu_t$ is the unique distributional solution of the  nonlocal parabolic PDE
\begin{equation}
\partial_t \mu_t = \beta^{-1}\Delta \mu_t + \nabla \cdot(\mu_t\nabla W * \mu_t) \, ,
\label{eq:MVintro}
\end{equation}
often referred to as the McKean--Vlasov equation. The above PDE can exhibit the phenomenon of phase transitions (see \cite{CGPS20, CP10}): for $\beta$ sufficiently small, equation~\eqref{eq:MVintro} has a unique steady state (in the space of probability measures) while for $\beta$ sufficiently large, equation~\eqref{eq:MVintro} possesses multiple steady states. This switch between one and multiple steady states for \eqref{eq:MVintro} is caused exactly by the aforementioned competition between the diffusive behaviour produced by the Laplacian, which dominates in the $\beta \ll 1$ regime and the tendency to aggregate due to the nonlocal drift, which prevails in the $\beta \gg 1$ regime. 

\subsection*{The setting: bounded potential on unbounded domain}

In this article, we are interested in the setting of $\Omega=\R^d$. Since $W$ and all its derivatives are assumed to be bounded, solutions of \eqref{eq:MVintro} are not uniformly tight and the equation has no steady states which are probability measures. A proof of this can be found in \cite[Theorem 3.1]{CDP19}. 

Even though \eqref{eq:MVintro} has no steady states, the competition between the diffusive and attractive terms manifests itself in a different manner. Numerical experiments (see \cite[Figure 8]{CCY19}) suggest that if the potential $W$ is sufficiently localised, $\beta \gg 1$, and the initial datum is well-prepared, then solutions of \eqref{eq:MVintro} appear to converge to a \emph{localised droplet state} and stay close to it for a very long time before eventually converging to $0$, as expected.  Thus, there is a separation of time scales between convergence to the droplet state and the eventual escape of mass to infinity and convergence to $0$. This separation is an instance of what is commonly referred to as \emph{dynamical metastability}, wherein solutions of a dissipative dynamical system converges on a fast time scale to a submanifold of its state space, along which the time evolution is slow. 
We refer the reader to \cref{fig:schematic}, where we provide a simple schematic sketch of this phenomenon, which has been studied in the context of other equations as well, such as reaction-diffusion equations (see \cite{CP90,AM25}) or the Cahn--Hilliard equation (see \cite{OR07}).  We also point the reader to~\cite[Section 3.2.2]{CCY19} for a more detailed discussion of this phenomenon in the context of~\eqref{eq:MVintro}.

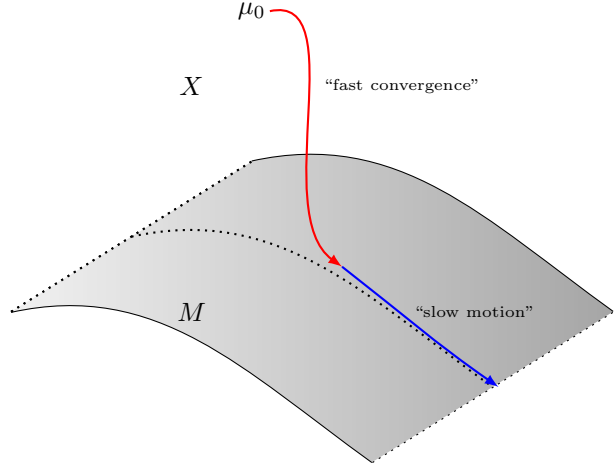
\begin{figure}
\begin{tikzpicture}[xscale=0.8, >=latex]
\path[draw, thick, name path=border1] (0,0) to [out=10,in=150] (6,-2);
\path[draw, thick, name path=border1,dotted] (2,-4) to  (6,-2);
\path[draw, thick, name path=border1] (-4,-2) to [out=10,in=150] (2,-4);
\path[draw, thick, name path=border1,dotted] (-4,-2) to  (0,0);
\shade[left color = gray!10, right color=gray!70]
(0,0) to [out=10, in =150] (6,-2) -- (2,-4) to[out=150,in=10] (-4,-2) -- cycle;
\node at (-1,-2) {$M$};
\node at (-1,1) {$X$};
\path[draw, thick, name path=border1,dotted] (-4,-2) to  (0,0);
\path[draw, thick, dotted] (-2,-1) to [out=10,in=150] (4,-3);
\node[circle] at (0,2)  {$\mu_0$};
\path[draw, thick, color=red,->] (0.3,2)  to [out=10,in=150] (1.5,-1.4) ;
\path[draw, thick, color=blue,->] (1.5,-1.4) to[out=-32,in=151]  (4.1,-3);
\node[circle] at (2.5,1)  {\tiny ``fast convergence''};
\node[circle] at (3.7,-2)  {\tiny ``slow motion''};
\end{tikzpicture}
\caption{A schematic depiction of dynamical metastability: the coloured paths represent trajectories of a dynamical system which can be divided into a regime of fast convergence towards the slow submanifold $M$ (red), and slow motion along $M$ (blue).}
\label{fig:schematic}
\end{figure}

\subsubsection*{Absence of invariant probability measure for the particle system}
We are studying this separation of time scales as it shows up in the corresponding particle system. In analogy to the absence of steady states for the thermodynamic limit, the particle system \eqref{eq:particlesystem} does not have an invariant probability measure under our assumptions. To see this, consider the Fokker--Planck equation associated to the law $\rho_t(x) \, \mathrm{d}x $ of \eqref{eq:particlesystem}, 
\begin{equation}
\partial_t \rho_t =\beta^{-1}\Delta \rho_t + \nabla \cdot (\rho_t\nabla H_N )  \, ,
\end{equation}
where  the Hamiltonian $H_N : (\R^d)^N \to \R$ is given by 
\begin{equation}
H_N(x) \coloneqq \frac{1}{2N}\sum_{i,j=1}^N W(x_i -x_j) \, .
\end{equation}
We define $f_t \coloneqq\rho_t e^{\beta H_N}$ and observe that it solves the following equation
\begin{align}
\partial_t f_t = & \, \beta^{-1}\Delta f_t - \nabla H_N \cdot \nabla f_t \\
=& \, \beta^{-1}e^{\beta H_N}\nabla \cdot (e^{-\beta H_N}\nabla f_t) \, .
\end{align}
Multiplying by $f_t$ and integrating by parts against the measure $e^{-\beta H_N(x)} \, \mathrm{d}x$, we obtain the estimate
\begin{equation}
\frac{\dd }{\dd t} \int_{(\R^d)^N} |f_t|^2 e^{-\beta H_N(x)}\, \mathrm{d}x =  -2 \beta^{-1}\int_{(\R^d)^N} |\nabla f_t|^2 e^{-\beta H_N(x)}\, \mathrm{d}x \, .
\label{eq:energyestimate} 
\end{equation}
We now use the fact that $W$ is globally bounded by some constant $K<\infty$ along with the Nash inequality \cite{Nash1958} to assert that there exists a constant $C=C(N,d, K,\beta)$ such that
\begin{equation}
\frac{\dd }{\dd t} \int_{(\R^d)^N} |f_t|^2 e^{-\beta H_N(x)}\, \mathrm{d}x \leq -C \left[\int_{(\R^d)^N} |f_t|^2 e^{-\beta H_N(x)}\, \mathrm{d}x\right]^{\frac{Nd+2}{Nd}} \, .
\end{equation}
Applying Gr\"onwall's inequality, rewriting in terms of $\rho_t$,  and using the global boundedness of $W$ again, we see that 
\begin{equation}
\label{eq:polynomialdecay}
\int_{(\R^d)^N} |\rho_t|^2 \, \mathrm{d}x\lesssim (1+t)^{-\frac{Nd}{2}} \,,
\end{equation} 
which contradicts the existence of an invariant probability measure. 

\subsubsection*{Phenomenology of the particle system}
In the absence of an invariant probability measure, one observes a similar dynamical picture as that seen for the thermodynamic limit: the particles almost instantaneously converge to a droplet state in which they are all close to their collective centre of mass; a configuration which is analogous to the droplet state observed in the thermodynamic limit. This configuration persists for a long time before eventually dissipating, similar to the slow motion regime observed in the $N \to \infty$ limit. We refer the reader to~\cref{fig:2dsim} where we have demonstrated this phenomenon for a particular choice of $W$ in dimension $d=2$. 
\begin{figure}
\centering
\begin{minipage}{0.3\linewidth}
\centering
\includegraphics[scale=0.28]{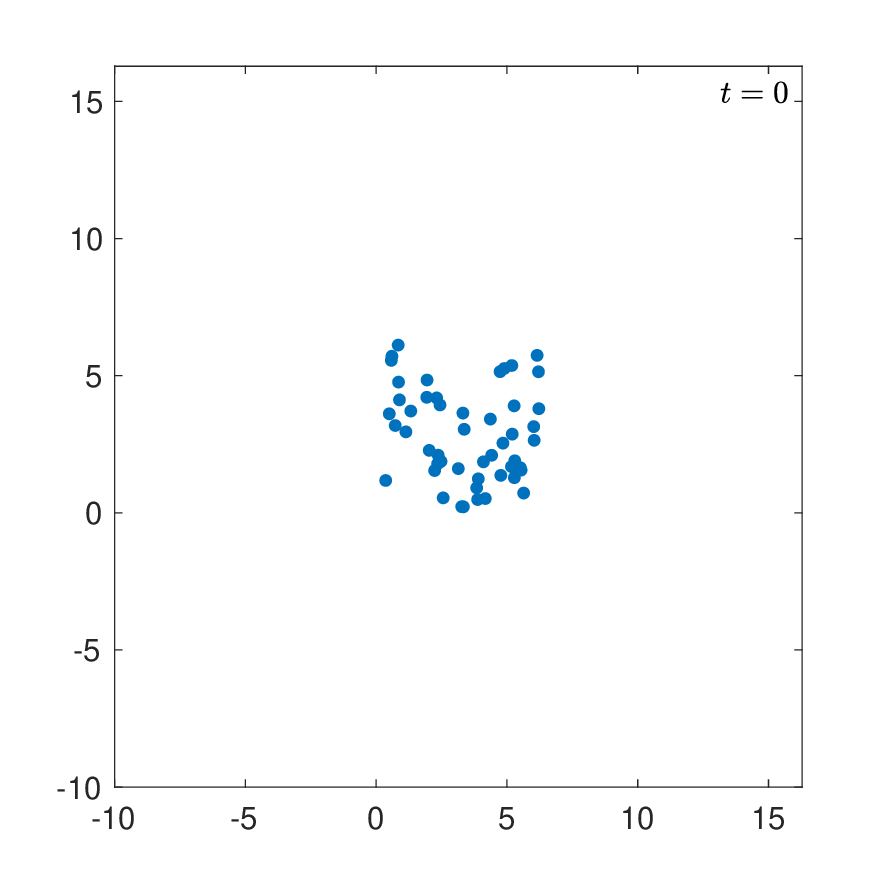}
\end{minipage}
\begin{minipage}{0.3\linewidth}
\centering
\includegraphics[scale=0.28]{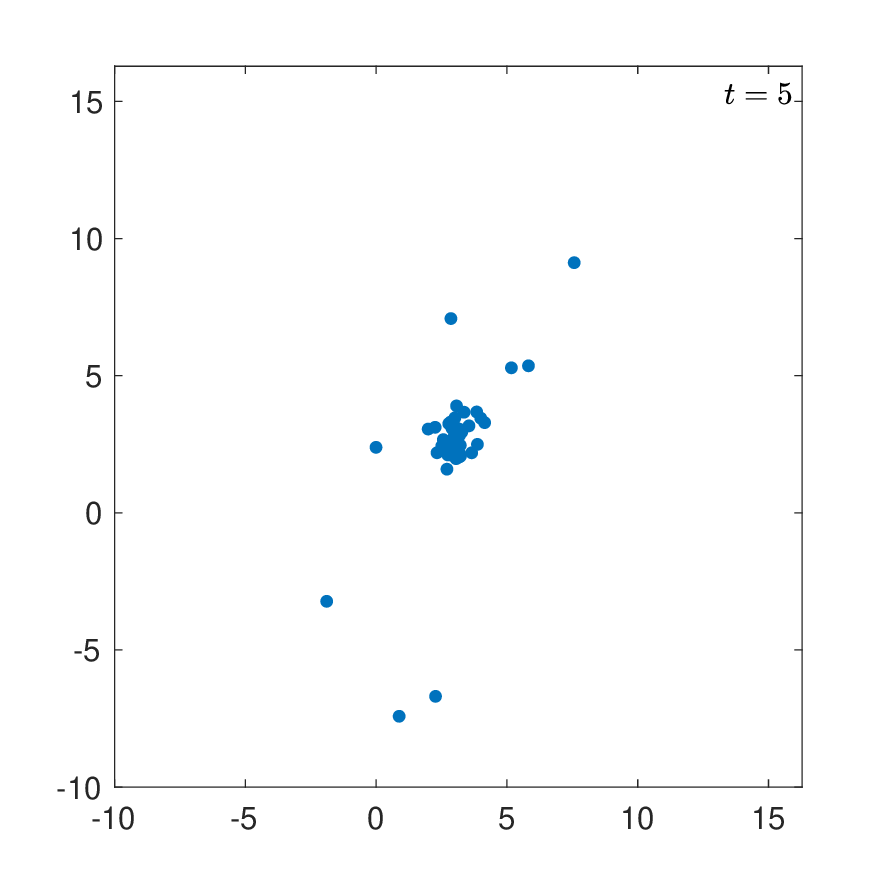}
\end{minipage}
\begin{minipage}{0.3\linewidth}
\centering
\includegraphics[scale=0.28]{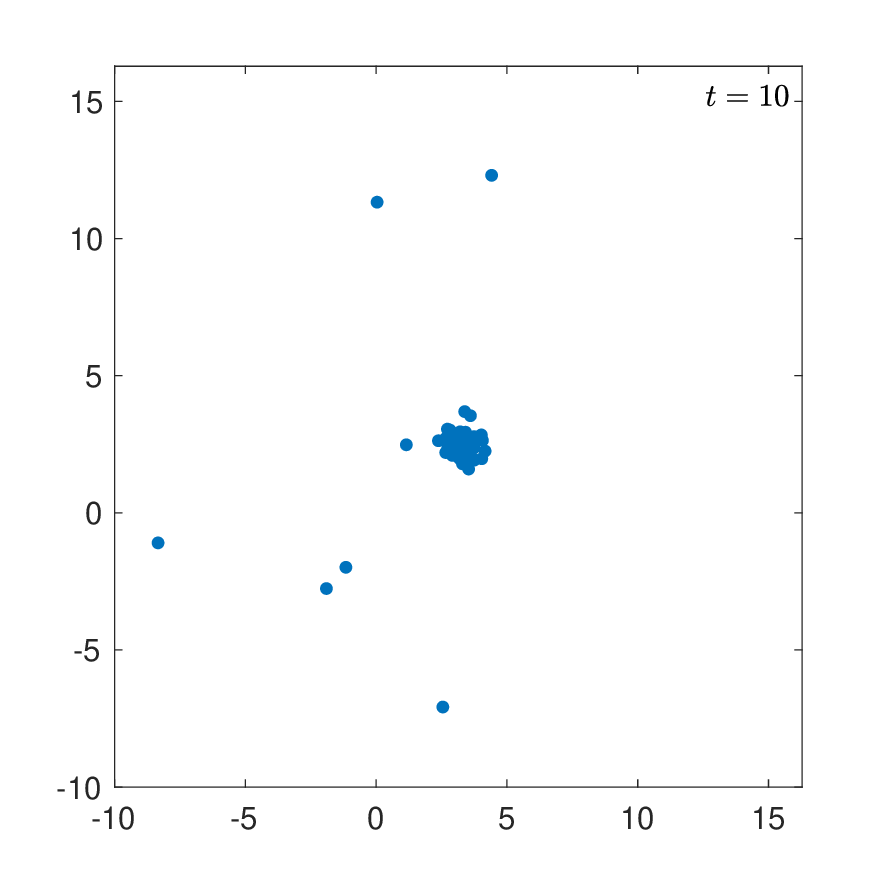}
\end{minipage}
\begin{minipage}{0.3\linewidth}
\centering
\includegraphics[scale=0.28]{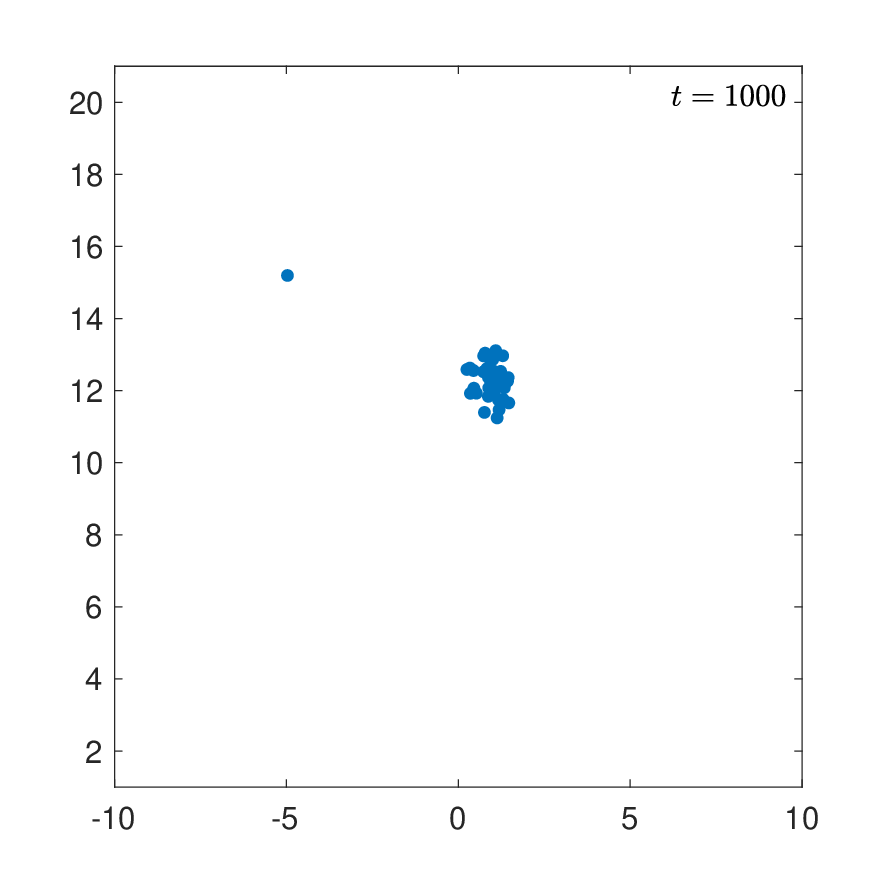}
\end{minipage}
\begin{minipage}{0.3\linewidth}
\centering
\includegraphics[scale=0.28]{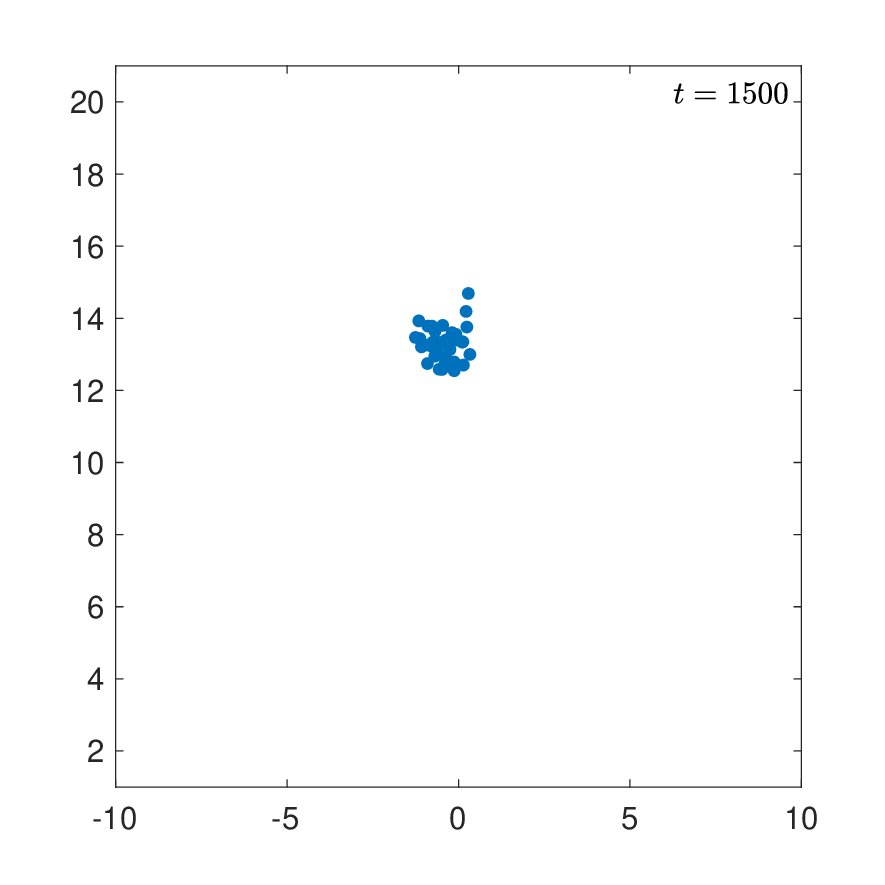}
\end{minipage}
\begin{minipage}{0.3\linewidth}
\centering
\includegraphics[scale=0.28]{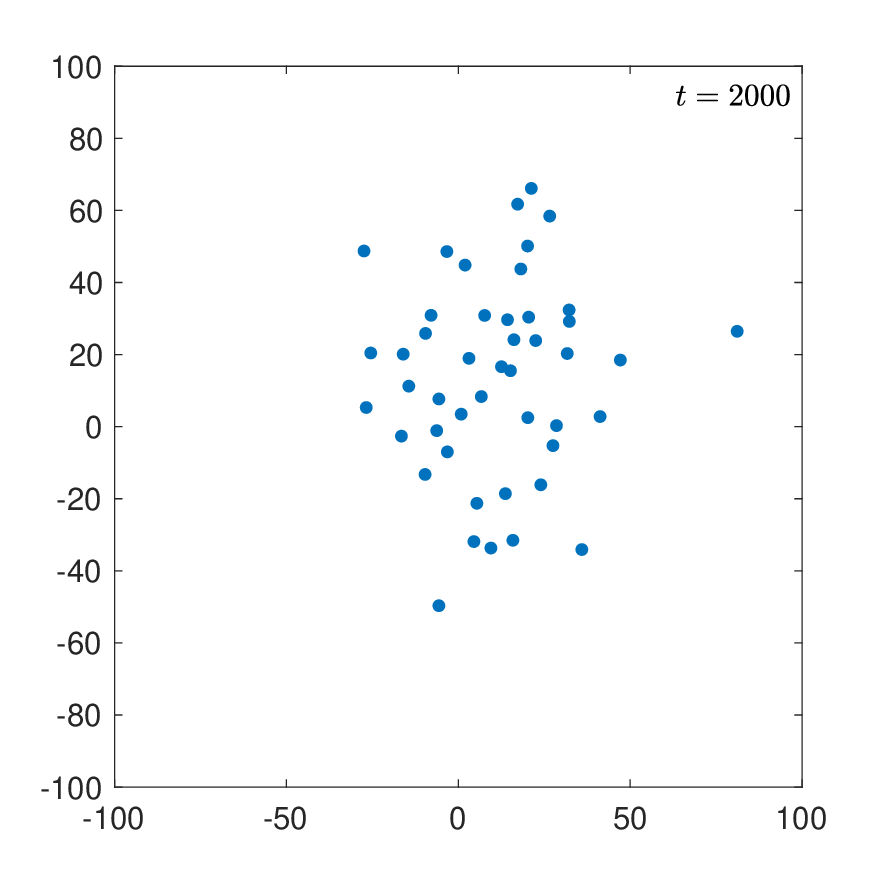}
\end{minipage}
\caption{
Simulation showing convergence to a single droplet state on a short time scale, followed by the eventual escape of particles from the droplet on a much longer time scale. 
The simulations are performed with $W=1-e^{-|x|^2}$ and $\beta=0.3$.}
\label{fig:2dsim}
\end{figure}

A key difference between the particle system and the limit is that the centre of mass of the particles is not conserved while that of the limit $\mu_t$ is, i.e.~$\int_{\R^d} x \mu_t(x) \, \mathrm{d}x = \int_{\R^d} x \mu_0(x) \, \mathrm{d}x$ for all times $t\geq0$. In contrast, one can check that the centre of mass of the particles satisfies
\begin{align}
\bar{X}_t^N \coloneqq & \,\int_{\R^d} x \, \mathrm{d}\mu_t^N \\
=& \, \frac1N\sum_{i=1}^N X_t^i = \frac{\sqrt{2\beta^{-1}}}{N}\sum_{i=1}^N B_t^i  \, ,
\end{align}
where we have used the fact that $W$ is even. Thus, $\bar{X}_t^N$ performs Brownian motion with variance of order $N^{-1}$, and this  motion vanishes in the limit. However, since we are only interested in the distance of the particles from their centre of mass and not the motion of the centre of mass itself, we quotient out this random drift. This quotienting procedure is explained in more detail in \cref{sec:prelim}. 

Based on these observations, we study the separation of time scales for the particle system \eqref{eq:particlesystem}~instead of the PDE \eqref{eq:MVintro}. 
The reason for this is two-fold. 
Firstly,  our approach to the problem involves studying the spectrum of the generator of the interacting particle system~\eqref{eq:particlesystem}. 
As this generator is linear, it can be tackled using standard tools from spectral theory (as opposed to the limit~\eqref{eq:MVintro}, which is nonlinear). 
Secondly, if one views the interacting particle system~\eqref{eq:particlesystem} as representing the ``ground truth'' and the limit~\eqref{eq:MVintro} only as an approximation, studying the time scale separation on this level is more appropriate.\\

\subsection*{Main result}
We now present a heuristic version of the main result of this paper. Let $\mu_t^N$ denote the empirical measure of the system of SDEs \eqref{eq:particlesystem} (after quotienting out the centre of mass $\bar{X}_t^N$), and let $\bar{\mu}_N$ denote the empirical measure of the droplet configuration described above (also after quotienting out $\bar{X}^N_t$). 
Assume that $W$ satisfies the previous boundedness assumptions and is attractive.
Furthermore, assume that $\beta \gg 1$, and that the particles are sufficiently localised (in a sense that we make more precise later on). 

\begin{thm}
 For any $N \in \mathbb N$ and large $\beta\gg1$, there exist constants $\lambda_{1,\beta,N}>0$ and $\lambda_{2,\beta,N}>0$  such that $\mu_t^N$ converges to the random droplet configuration $\bar{\mu}^N$ on a time scale of order $\lambda_{2,\beta,N}^{-1}$, 
while the leakage from the droplet state occurs on a time scale of order $\lambda_{1,\beta,N}^{-1}$, with
\begin{align}
\lim_{\beta \to + \infty} \lambda_{1,\beta,N}=0, \qquad \lim_{\beta \to + \infty} \lambda_{2,\beta,N}= \lambda_{2,N} >0 \,. 
\end{align}
Moreover, the limit $\lambda_{2,N}$ is bounded away from $0$ uniformly in $N$. 
Finally, $\bar{\mu}^N$ is localised on a length scale of order $\beta^{-\frac12}$, i.e. 
\begin{equation}
\mathbb{E}\left[\int_{\R^d}(x-\int \bar{x}\,\mathrm{d}\bar{\mu}^N(\bar{x}))^2\,\mathrm{d}\bar{\mu}^N(x) \right] \leq \beta^{-1} \, . 
\end{equation} 
\label{thm:heuristic}
\end{thm}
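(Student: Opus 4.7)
My strategy is to recast the statement as a spectral problem for the generator of the quotient particle system and then to analyse the resulting operator via a Witten-type conjugation combined with semiclassical techniques in the regime $\beta\to\infty$. Let $E_N \coloneqq \{x\in(\R^d)^N : \sum_{i=1}^N x_i = 0\}$. By translation invariance of $H_N$, the diffusion obtained from \eqref{eq:particlesystem} after quotienting out the centre-of-mass Brownian motion $\bar X^N_t$ lives on $E_N$ and has generator
\begin{equation*}
\mathcal{L}_N = \beta^{-1}\Delta_{E_N} - \nabla H_N \cdot \nabla,
\end{equation*}
which is symmetric with respect to the $\sigma$-finite reference measure $e^{-\beta H_N(x)}\dd x$. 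The unitary map $f \mapsto f e^{-\beta H_N/2}$ from $L^2(e^{-\beta H_N}\dd x)$ to $L^2(\dd x)$ conjugates $-\mathcal{L}_N$ into the Schr\"odinger operator
\begin{equation*}
S_\beta = -\beta^{-1}\Delta + V_\beta, \qquad V_\beta := \tfrac{\beta}{4}|\nabla H_N|^2 - \tfrac12\Delta H_N,
\end{equation*}
whose eigenvalues are precisely the $\lambda_{k,\beta,N}$ of the theorem. Formally $S_\beta\,e^{-\beta H_N/2} = 0$, but because $H_N$ is globally bounded this would-be ground state fails to lie in $L^2(\dd x)$: since $V_\beta\to 0$ at infinity the essential spectrum of $S_\beta$ fills $[0,\infty)$, and $\lambda_{1,\beta,N}$ is a strictly positive eigenvalue isolated below it.

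\textbf{The two scales from the harmonic approximation.} Evenness and attractiveness of $W$ make $0\in E_N$ the unique global minimum of $H_N$, and a direct computation gives $\mathrm{Hess}\,H_N(0) = (I_N - N^{-1}J_N)\otimes \partial^2 W(0)$, whose eigenvalues on $E_N$ are those of $\partial^2 W(0)$, namely $\omega_1\le\dots\le \omega_d$ (strictly positive by nondegeneracy), each with multiplicity $N-1$; in particular $\omega_1$ is $N$-independent. The harmonic model $S_\beta^{\mathrm{harm}}$ obtained by replacing $V_\beta$ with its quadratic Taylor polynomial at $0$ has explicit spectrum $\{\sum_{i,\alpha}\omega_\alpha n_{i,\alpha} : n_{i,\alpha}\in\N_0\}$, with ground state $0$ and first excited eigenvalue $\omega_1$. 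The proof then reduces to controlling the true spectrum of $S_\beta$ by that of $S_\beta^{\mathrm{harm}}$ and showing $\lambda_{1,\beta,N}\to 0$ while $\lambda_{2,\beta,N}\to \omega_1$. The upper bound $\lambda_{1,\beta,N} = o_\beta(1)$ follows by plugging the cutoff trial function $u_R := \chi_R e^{-\beta H_N/2}$, with $\chi_R$ supported in a ball of radius $R(\beta)\gg\beta^{-1/2}$, into the Rayleigh quotient; the identity $S_\beta e^{-\beta H_N/2}=0$ reduces this to
\begin{equation*}
\lambda_{1,\beta,N} \le \frac{\int_{E_N} \chi_R\bigl(-\beta^{-1}\Delta\chi_R + \nabla\chi_R\cdot\nabla H_N\bigr) e^{-\beta H_N}\,\dd x}{\int_{E_N}\chi_R^2\, e^{-\beta H_N}\,\dd x},
\end{equation*}
which is $o(1)$ on an appropriate choice of $R(\beta)$: the denominator is a Gaussian integral of size $\sim \beta^{-(N-1)d/2}$, while the numerator is supported on the annulus $R\le|x|\le 2R$ where $e^{-\beta H_N}$ is strictly smaller. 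For the lower bound I would employ an IMS partition of unity splitting $S_\beta$ into an inner piece on a $\beta^{-1/4}$-neighbourhood of the well and an outer piece, compare the inner operator with $S_\beta^{\mathrm{harm}}$ by semiclassical perturbation (\`a la Helffer--Sj\"ostrand/Simon), and bound the outer piece below by exploiting that $V_\beta \ge c>0$ at positive distance from the critical set of $H_N$.

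\textbf{Localisation of the droplet.} Under the Witten conjugation the normalised principal eigenfunction $\psi_\beta$ of $S_\beta$ corresponds to the quasi-stationary density via $\bar\mu^N \propto \psi_\beta^2$. Semiclassical estimates give $\psi_\beta(x) \sim Z_\beta^{-1/2} e^{-\beta H_N(x)/2}$ on the droplet scale, so that $\bar\mu^N$ is essentially the Gibbs measure truncated to the well. Using the quadratic lower bound $H_N(x)\ge \tfrac12\langle x,\mathrm{Hess}\,H_N(0)\,x\rangle + O(|x|^3)$ near the origin and standard Gaussian moment computations, $\int |x|^2\,\bar\mu^N(\dd x)$ is of order $\beta^{-1}$ with a constant controlled by $\mathrm{tr}((\partial^2 W(0))^{-1})$ and the normalisation factor $1/N$ of the empirical measure; this yields the claimed second-moment bound.

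\textbf{Main obstacle.} The crux of the argument is establishing the lower bound $\lambda_{2,\beta,N}\ge \omega_1 - o(1)$, since the essential spectrum of $S_\beta$ touches $0$ and the formal ground state $e^{-\beta H_N/2}$ is not square integrable. Classical semiclassical results on the low-lying spectrum of Witten Laplacians (Helffer--Nier, Bovier--Eckhoff--Gayrard--Klein, \ldots) assume confining potentials and do not apply out of the box to our bounded-potential, non-confining setting; one must adapt those techniques to exclude any accumulation of discrete spectrum of $S_\beta$ inside $[\lambda_{1,\beta,N},\omega_1/2]$ from the essential part. Achieving $N$-uniformity of the estimate, which rests on the explicit pair-interaction structure of $\mathrm{Hess}\,H_N(0)$, is an additional technical subtlety.
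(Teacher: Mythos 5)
Your proposal shares the central structural idea with the paper — quotient out the centre of mass, conjugate the generator to a Witten-type Schr\"odinger operator $S_\beta = -\beta^{-1}\Delta + \frac{\beta}{4}|\nabla H_N|^2 - \frac12\Delta H_N$, identify the harmonic frequencies via $\mathrm{Hess}\,H_N(0) = (I_N - N^{-1}J_N)\otimes D^2W(0)$, and read off $\lambda_{2,\beta,N}\to w''(0)$ from the gap of the harmonic oscillator — and the block-circulant computation of the Hessian and the resulting $N$-uniformity are exactly right. However, there is a genuine gap that undermines the whole set-up: you never introduce killing, and the unkilled operator on $E_N$ simply has no discrete spectrum to speak of. Since $W$ and its derivatives vanish at infinity, $V_\beta\to 0$ at infinity, the essential spectrum of $S_\beta$ on $L^2(E_N)$ is $[0,\infty)$, and because $S_\beta \ge 0$ there are no eigenvalues below it. Your sentence asserting that ``$\lambda_{1,\beta,N}$ is a strictly positive eigenvalue isolated below [the essential spectrum]'' is self-contradictory, and your Rayleigh-quotient computation with $\chi_R e^{-\beta H_N/2}$ only shows $\inf\mathrm{spec}(S_\beta)=0$, not that a ground-state eigenvalue tends to $0$. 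The paper defines $\lambda_{1,\beta,N},\lambda_{2,\beta,N}$ as eigenvalues of the Dirichlet generator $-L_D$ on the ball $B_{\sqrt{N}\delta}$, which genuinely has compact resolvent and discrete spectrum; the droplet state is then the QSD of the killed process. Without this step the quantities your theorem refers to are not defined.

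The same issue invalidates the ``outer piece'' of your IMS argument: you write that one can bound it below using ``$V_\beta \ge c > 0$ at positive distance from the critical set of $H_N$,'' but this is false in the bounded-potential regime — $V_\beta$ decays to $0$ as $|x|\to\infty$, which is precisely the obstruction you flagged earlier. The paper circumvents it by first imposing the Dirichlet boundary on $B_{\sqrt{N}\delta}$ and then approximating the Dirichlet operator by a family $\tilde S_\beta = S_\beta + \beta V_0$ on all of $\R^{(N-1)d}$, where $V_0$ is an explicit confining potential vanishing inside the ball; after establishing norm-resolvent convergence $\tilde S_\beta \to S_\beta$, Simon's semiclassical eigenvalue asymptotics (which do require confinement) apply to $\tilde S_\beta$ and transfer to $S_\beta$. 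A secondary point: your $\bar\mu^N\propto\psi_\beta^2$ is the quasi-ergodic distribution rather than the QSD $q_N\propto e_1\,p_N$ used in the paper; both concentrate at scale $\beta^{-1/2}$, so the moment estimate would still go through in spirit, but it is not the object the theorem is about. The exponential smallness of $\lambda_{1,\beta,N}$, which your Rayleigh quotient cannot capture because $\lambda_1$ is undefined in your framework, is proved in the paper by Mathieu's valley-depth theorem for the Dirichlet problem, giving $\lambda_{1,\beta,N}\lesssim e^{-\beta(w(\delta)+\lambda\delta^2/2)/2}$ rather than mere $o(1)$.
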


The mathematically precise version of the above result can be found in~\cref{sub:statement_of_the_main_results}. The proof relies on studying the system \eqref{eq:particlesystem} on a bounded domain with killing and using the theory of quasi-stationary distributions (QSDs) (introductions to which can be found in \cite{CMSM13}~or \cite{CV16,CV17a}). The quantities $\lambda_{1,\beta},\lambda_{2,\beta}$ then correspond to the top two eigenvalues of the generator of the killed process. Furthermore, the droplet configuration mentioned in~\cref{thm:heuristic} is exactly the empirical measure of points which are sampled from the quasi-stationary distribution of the system.

\subsection*{Structure of the paper}
Section 2 gives an overview of the notation, techniques, and results of this paper. In Section 2.1, we specify the assumptions on the interaction potential $W$ and, in Section 2.2, we introduce the setting of the projected dynamics after quotienting out the motion of the centre of mass.
After introducing a more precise definition of the droplet state $\bar{\mu}^N$ in Section 2.3, we introduce the (sub-)Markovian semigroups and their generators used in our setting in Section 2.4. The sub-Markovian part concerns the situation where trajectories hitting the boundary of the domain of interest are disregarded, and conditioning on survival within this domain is considered. This leads to the notion of a quasi-stationary distribution in Section 2.5 that helps to describe the droplet state and convergence to it.
In Section 2.6 our main results are stated: Theorem 2.2 describes the rate of escape from the droplet state as exponentially small in the inverse temperature $\beta$ as $\beta \to \infty$, while Theorem 2.5 yields a precise description of the $\beta \to \infty$-convergence for higher order eigenvalues; in particular, this yields an $\mathcal O (1)$ rate for the convergence to the QSD and the associated droplet state. 
Theorem 2.6 then translates these spectral results into a precise characterisation of the droplet state and convergence to it, along with leakage from the droplet state, occurring on different time scales. In the following, Section 3 contains the proof of Theorem 2.2, Section 4 gives the proof of Theorem 2.5 and Section 5 is dedicated to proving Theorem 2.6.

\section{Notation, preliminaries and main results}\label{sec:prelim}
We start this section by introducing notation needed for the rest of the paper. Given a Polish space $(X,d)$, we denote by $\cP(X)$ the space of Borel probability measures on $X$ and by $\cP_p(X)\subset \cP(X)$, $1 \leq p < \infty$, the set of those measures with finite $p$th moments. 
We also denote by $\mathcal{M}_+(X)$ the space of non-negative, $\sigma$-finite measures on $X$. We will often need to equip these spaces with some topology. For $ \cP(X)$, we work either with the weak topology, i.e.~the coarsest topology such that the map $\cP(X) \ni \mu \mapsto \int f \, \mathrm{d}{\mu}$ is continuous for every bounded and continuous $f$, or with the topology induced by the total variation metric $\lVert \cdot\rVert_{\mathrm{TV}}$, 
\begin{equation}
\lVert \mu -\nu \rVert_{\mathrm{TV}} \coloneqq \sup_{A \in \mathcal{B}(X)} |\mu(A) -\nu(A)| \, ,
\end{equation}
where $\mathcal{B}(X)$ is the Borel sigma-algebra on $X$. We equip the spaces $\cP_p(X)$ with the $p$-Wasserstein transportation cost metric $W_p(\cdot,\cdot)$, defined as follows
\begin{equation}
W_p(\mu,\nu) \coloneqq   \left(\inf_{\pi \in \Pi(\mu,\nu)} \int_{X\times X} |x-y|^p\, \mathrm{d}\pi(x,y)\right)^{\frac 1p} \,,
\end{equation}
where $\Pi(\mu,\nu)$ denotes the set of all couplings between the measures $\mu$ and $\nu$. We also need the following dual formulation of $W_1(\cdot,\cdot)$:
\begin{equation}
W_1(\mu,\nu) = \sup_{f \in \mathrm{Lip}_1(X)}\int_X f\, \mathrm{d}(\mu-\nu) \, ,
\label{eq:W1}
\end{equation}
where
\begin{equation}
\mathrm{Lip}_1(X)\coloneqq\{f\in C(X): |f(x)-f(y)|\leq d(x,y)\quad \forall x,y \in X\}\, .
\end{equation}

Given an open set $U \subseteq \R^d$ and a measure $\mu \in \cP(U)$, we denote by $L^p(U;\mu)$, $1 \leq p \leq \infty$, the weighted Lebesgue spaces on $U$ and by $C^k(U)$ (and $C_c^k(U)$) the space of $k$-times continuously differentiable (and compactly supported) functions on $U$. 

\subsection{Assumptions on the potential \texorpdfstring{$W$}{W}}
It is clear from the discussion in the introduction that a result of the form we are searching for cannot possibly be true for all forms of $W$. The potential $W$ should be locally attractive at a minimum, to allow the formation of a droplet, but at the same time its gradient should decay near infinity, to allow particles to escape the droplet. The assumption below captures these restrictions on $W$.
\begin{ass}
The interaction potential $W$ is radial, with $W(x)\coloneqq w(|x|)$ for some $w:\R \to \R$ which is bounded, even and smooth with $w'$ being a Schwartz function. Finally, we assume that $w(0)=w'(0)=0$, $w''(0)> -\min_{\R} w''> 0$, and $w'(x) > 0$ for $x >0$. 
\label{ass:W}
\end{ass}

It follows from the above assumption that $W$ is $\lambda$-convex, i.e.
\begin{equation}
W((1-t)x +t y) \leq (1-t)W(x) + tW(y) - \frac\lambda2 t(1-t)|x-y|^2 
\label{eq:lconvex}
\end{equation}
for all $x,y,\in \R^d$ and $t\in [0,1]$, with 
\begin{equation}
\lambda = \inf_{x \in \R^d} \inf\left\{\kappa:\kappa \in  \spec( (D^2 W) (x))\right \} \, ,
\label{eq:lambdaconvexity}
\end{equation}
where $\spec(A)$ denotes the spectrum of the matrix $A$. Note that we necessarily have $\lambda<0$, since if $\lambda \geq 0$, we would have that $W$ is a bounded and convex function and thereby constant, which it is not.  The following form of $\lambda$-convexity is needed for our arguments
\begin{equation}
W\left(\frac{1}{N}\sum_{i=1}^N x_i\right) \leq \frac{1}{N}\sum_{i=1}^N W\left( x_i\right) +\frac{\lambda(1-N)}{2N^2} \sum_{i=1}^N|x_i|^2 \, , 
\label{eq:lconvexN}
\end{equation}
for any collection of points $(x_i)_{i=1,\dots,N}\subset \Omega$. The above inequality is a straightforward consequence of \eqref{eq:lconvex}.

\subsection{Projected dynamics}
We now describe the quotienting procedure alluded to in the introduction. In the following, we set $ \Omega =\R^d$. 
As discussed earlier, we are interested in the relative positions of the particles, and would like to disregard the motion of their centre of mass. To this end, we define the following linear subspaces
\begin{align}
\Gamma_N \coloneqq & \, \left\{x \in \Omega^N : x_i=c \in \Omega,\,\forall i =1,\dots,N   \right\},\\
\Gamma_N^\perp \coloneqq & \, \left\{x \in \Omega^N : \sum_{i=1}^N x_i =0    \right\}  \, .
\end{align}
Furthermore, we define $\Pi_N : \Omega^N \to \Gamma_N^\perp$ to be the orthogonal projection onto $\Gamma_N^\perp$. 
Let $X_t = (X_t^1, \dots, X_t^N)$ denote the strong solution to the system of SDEs in \eqref{eq:particlesystem}. Then, we define
\begin{equation}
Y_t \coloneqq \Pi_N \, X_t \, .
\end{equation}
Since $X_t$ is a solution of the SDE
\begin{equation}
\dd X_t = -\nabla H_N(X_t) \, \dd t + \sqrt{2\beta^{-1}} \dd B_t \, ,
\end{equation}
where $B_t = (B_t^1, \dots, B_t^N)$, it follows that $Y_t$ is a solution of
\begin{equation}
\dd Y_t = -\Pi_N \, \nabla H_N(X_t) \, \dd t + \sqrt{2\beta^{-1}} \dd (\Pi_N \, B_t) \, .
\end{equation}
We now note that, since $\Pi_N$ is orthogonal, $\bar {B_t}\coloneqq \Pi_N \, B_t $ is a standard $(N-1)d$-dimensional Brownian motion on $\Gamma_N^\perp$. Furthermore, since $(\nabla H_N)(x)=(\nabla H_N)(\Pi_N\, x)$ we have that $\Pi_N \, \nabla H_N(X_t) = \bar{\nabla}U_N (Y_t)$, where $\bar{\nabla}$ is the Euclidean gradient on $\Gamma_N^\perp$ and $U_N$ is the restriction of $H_N$ to $\Gamma_N^\perp$. To see this, note that for any $y \in \Gamma_N^\perp, v \in T_y \Gamma_N^\perp$
\begin{align}
\langle\bar{\nabla} U_N (y), v \rangle\,\
=& \, \left. \frac{d}{d\eps} U_N(y + \eps v) \right|_{\eps=0}\\
=& \, \left. \frac{d}{d\eps} H_N(y + \eps v) \right|_{\eps=0} \,
= \, \langle \nabla  H_N (y), v \rangle \, ,
\end{align}
where we have used the fact that the metric on $\Gamma_N^\perp$ is the restriction of the metric on $\R^{Nd}$. Using the fact that projections are idempotent and self-adjoint, we have that 
\begin{equation}
\langle\bar{\nabla} U_N (y), v \rangle = \langle \Pi_N \,\nabla  H_N (y), v \rangle \, .
\end{equation}
Thus, we have that $Y_t$ solves the SDE
\begin{equation}
dY_t = -\bar{\nabla} U_N(Y_t) \, dt + \sqrt{2\beta^{-1}} d \bar{B}_t \, ,
\label{eq:quotientparticle}
\end{equation}
implying that $Y_t$ is also a Markov process. Additionally, the positive measures ${\mu}_{N,X}\in \mathcal{M}_+(\Omega^N)$ and ${\mu}_{N,Y}\in \mathcal{M}_+(\Gamma_N^\perp)$, given by
\begin{equation}
\mu_{N,X}(\mathrm{d}x) \coloneqq \exp(-\beta H_N(x)) \, \mathrm{d}x \qquad \mu_{N,Y}(\mathrm{d}y) \coloneqq \exp(-\beta U_N(y)) \, \mathrm{d}y \, ,
\label{eq:muN}
\end{equation}
are invariant for the processes $X_t$ and $Y_t$, respectively. In the following sections, for the sake of notational convenience, we drop the bar on $\bar{\nabla}$ (resp.~$\bar{B}_t$) when referring to the gradient (resp.~Brownian motion) on $\Gamma_N^\perp$. Furthermore, since we will only be referring to and working with $\mu_{N,Y}$ in the remainder of the article, we will drop the subscript $Y$.

\subsection{The empirical measure} 
\label{sub:the_empirical_measure}
In this section, we provide a more precise mathematical formulation of what it means for the particles to droplet together. As mentioned previously, we are interested in the positions of particles relative to their centre of mass. Keeping this in mind, we arrive at the following natural notion of a droplet: we say the configuration of particles $x \in \Omega^N$  lies in a droplet of size $\ell>0$ if 
\begin{align} \label{eq:droplet_x}
\int_{\R^d}\left(z - \bar{x}\right)^2 \, \mathrm{d}\mu^{(N),x} \leq \ell^2 \, ,
\end{align}
where $\mu^{(N),x}$ is the empirical measure associated to the configuration $x$, i.e.
\begin{equation}
\mu^{(N),x} \coloneqq \frac1N\sum_{i=1}^N\delta_{x_i}\, 
\end{equation}
and
\begin{equation}
\bar{x}\coloneq\int_{\R^d} z \, \mathrm{d}\mu^{(N),x}\, .
\end{equation}
The above notion of droplet is agnostic to the choice of centre of mass, $\bar{x}$. Indeed, we can equivalently express condition~\eqref{eq:droplet_x} in terms of $\mu^{(N),y}\coloneqq  \mu^{(N), \Pi_N\, x}$, $y=\Pi_N \,x$, as 
\begin{align}  \label{eq:droplet_y}
\int_{\R^d}|z|^2 \, \mathrm{d}\mu^{(N),y} \leq \ell^2\, .
\end{align}
At the level of the configuration of particles, conditions~\eqref{eq:droplet_x} and~\eqref{eq:droplet_y} reduce to
\begin{equation}
\frac{1}{2N}\sum_{i,j=1}^N |x_i-x_j|^2 \leq \ell^2
\end{equation}
and, equivalently,
\begin{equation}
\frac{1}{N}\sum_{i=1}^N |y_i|^2 \leq \ell^2\, ,
\end{equation}
for $y=\Pi_N\, x$.  We note in particular that~\eqref{eq:droplet_x} (resp.~\eqref{eq:droplet_y}) implies that $W_2(\mu^{(N),x},\delta_{\bar{x}})=\ell$ (resp. $W_2(\mu^{(N),y},\delta_{0})=\ell$).

\subsection{Markov/sub-Markov semigroups and generators} 
\label{sub:semigroups_and_generators}
We denote by $(S_t)_{t\geq0}$, $S_t: L^\infty(\Gamma_N^\perp)\to L^\infty(\Gamma_N^\perp)$\footnote{We remark on the subtle difference between $L^\infty$ which is an equivalence class and the space of bounded, measurable functions. Since our semigroup is sufficiently regular (the law of $Y_t$ is absolutely continuous) we can define it directly on $L^\infty$.} the Markov semigroup associated to $Y_t$, 
\begin{equation}
(S_tf)(y)\,\coloneqq\,\E\left[f(Y_t)|Y_0=y\right] \, .
\label{eq:markovsg}
\end{equation}

\paragraph{1.~\textit{Properties of the Markov semigroup $S_t$.}}
It is clear from the discussion in \cref{sub:the_empirical_measure} that $\mu_N$ is invariant for the semigroup $S_t$. Since the diffusion \eqref{eq:quotientparticle} has smooth and bounded coefficents, it follows from \cite[Proposition 6.16]{Bau14} that $S_t$ is Feller and its infinitesimal generator $L$ takes the following form on any $f \in C_c^\infty(\Gamma_N^\perp)$:
\begin{equation}
Lf:= \Delta f -\nabla U_N\cdot \nabla f \, . 
\end{equation}
Furthermore, one can check that $L$ is non-positive and  is essentially self-adjoint on $C_c^\infty(\Gamma_N^\perp)$ with respect to the inner product on $L^2(\Gamma_N^\perp;\mu_N)$ (see \cite[Proposition 4.11 \& Exercise 4.12]{Bau14}) and its minimal non-positive self-adjoint extension (which we continue to denote by $L$) generates a strongly continuous contraction Markov semigroup on $L^2(\Gamma_N^\perp;\mu_N)$ (see \cite[Theorem 4.15 \& Theorem 4.25]{Bau14} and the surrounding discussion). Clearly this semigroup must agree with $S_t$ on $L^2(\Gamma_N^\perp;\mu_N)\cap L^\infty(\Gamma_N^\perp)$ and so we continue to denote it by $S_t$. By interpolation (see \cite[Theorem 4.31]{Bau14}), we can extend $S_t$ to act as a contraction Markov semigroup on $L^p(\Gamma_N^\perp;\mu_N)$ for any $1\leq  p \leq \infty$.

Given the notion of droplet introduced in \cref{sub:the_empirical_measure}, it makes sense to define the following stopping time for the process $Y_t$:
\begin{equation}
\label{eq:taudelta}
\tau_{\delta}\,\coloneqq\,\inf\left\{t\geq 0\,:\,Y_t\notin B_{\sqrt{N}\delta}\right\} \, ,
\end{equation}
where $B_{\sqrt{N}\delta}$ is the open ball of radius $\sqrt{N}\delta$ in $\Gamma_N^\perp$ for some $\delta>0$. In what follows, we will choose $\delta>0$ such that 
\begin{equation}
w(\delta) + \frac{\lambda}{2}\delta^2\,>\,0\, , 
\label{eq:deltaCondition}
\end{equation}
where $w$ is as given in \cref{ass:W}. Note that this is always possible since $w''(0)$ is assumed to be larger that $-\min_{\R}w''$, and we can choose $\lambda=\min_{\R}w''$. Indeed, by Taylor expanding we see that there exist constants $c_w,\delta'>0$ such that for all $\delta<\delta'$
\begin{equation}
w(\delta) + \frac{\lambda}{2}\delta^2\,> c_w \delta^2\, .
\label{eq:quadraticlowerbound}
\end{equation}
\\
The rationale for the condition \eqref{eq:deltaCondition}~will become apparent in the proof of Lemma \ref{lemma:depth}. 

We denote by $(P_t)_{t\geq 0}$, $P_t: L^\infty(\bar{B}_{\sqrt{N}\delta}) \to L^\infty(\bar{B}_{\sqrt{N}\delta}) $, the sub-Markov semigroup associated to $Y_t,\tau_\delta$, 
\begin{equation}
\label{eq:submarkov}
(P_tf)(y)\,\coloneqq\,\E\left[f(Y_t)\,\mathbf{1}_{t<\tau_\delta}|Y_0=y\right] \, ,
\end{equation} 
where $\bar{B}_{\sqrt{N}\delta}$ denotes the closed ball of radius $\sqrt{N}\delta$.

\vspace{1em}

\paragraph{2.~\textit{Properties of the sub-Markov semigroup $P_t$.}}
It is straightforward to check that, for $f \in C_c^\infty(\Gamma_N^\perp)$ with $\mathrm{supp\,} f$ strictly contained in $\bar{B}_{\sqrt{N}\delta}$, the generator of $P_t$ is exactly given by $L$. 
Furthermore, we denote by $p_N \in \mathcal{P}(\bar{B}_{\sqrt{N}\delta}) $  the normalized version of $\mu_N$, i.e.
\begin{equation}
\label{eq:pN}
p_N\coloneqq\left(\mu_N(\bar{B}_{\sqrt{N}\delta})\right)^{-1}\mu_N\, . 
\end{equation}

Consider the set
\begin{equation}
C_{c}^\infty(\bar{B}_{\sqrt{N}\delta}):=\left\{f \in C_c^\infty(\Gamma_N^\perp): \mathrm{supp\,} f \subseteq \bar{B}_{\sqrt{N}\delta} \right\}\, .
\end{equation}
Then, it follows from \cite[Proposition 4.50]{Bau14}, that $L$ is essentially self-adjoint on $C_{c}^\infty(\bar{B}_{\sqrt{N}\delta})$ with respect to the inner product on $L^2(\bar{B}_{\sqrt{N}\delta};p_N)$, and its minimal self-adjoint extension which we denote by $L_D$ generates a strongly continuous sub-Markov semigroup $P_t^D$ on $L^2(\bar{B}_{\sqrt{N}\delta};p_N)$.
Note that for any $f \in C_{c}^\infty(\bar{B}_{\sqrt{N}\delta})$, $P_t^Df$ is the unique solution to the Dirichlet initial value problem with initial data $f$. Applying It\^o's formula to $f(Y_t)$, one can check that this is also the case for $P_tf$. It thus follows that $P_t f= P_t^D f $ for all $f \in C_{c}^\infty(\bar{B}_{\sqrt{N}\delta})$. Additionally, from Jensen's inequality and the invariance of $\mu_N$ it follows
that
\begin{equation}
\lVert P_tf \rVert_{L^2(\bar{B}_{\sqrt{N}\delta};p_N)}\leq \lVert f \rVert_{L^2(\bar{B}_{\sqrt{N}\delta};p_N)} \, ,
\end{equation}
for all $f \in L^\infty(\bar{B}_{\sqrt{N}\delta})$. It therefore follows that $P_tf =P_t^Df$ for all $f \in L^\infty(\bar{B}_{\sqrt{N}\delta})$ and $P_t^D$ is the unique uniformly continuous extension of $P_t$ to $L^2(\bar{B}_{\sqrt{N}\delta};p_N)$. We will drop the superscript $D$ for the sake of notational convenience. Again, by interpolation (see \cite[Theorem 4.31]{Bau14}), we can extend $P_t$ to act as a  sub-Markov contraction semigroup on $L^p(\bar{B}_{\sqrt{N}\delta};p_N)$ for any $1\leq p \leq \infty$. 

\vspace{1em}

\paragraph{3.~\textit{Spectrum and heat kernel representation of $P_t$}.}
It now follows from \cite[Theorem 4.52]{Bau14} that $P_t$ is a compact operator with a discrete spectrum on $L^2(\bar{B}_{\sqrt{N}\delta};p_N)$. More precisely, there exist a complete orthonormal basis $(e_{k})_{k \in \N }$ of the space $L^2(\bar{B}_{\sqrt{N}\delta};p_N)$ and real numbers $0<\lambda_1 <\lambda_2 \leq \lambda_{3}\leq  \dots <\infty$, such that $-L_D$ has discrete spectrum with eigenfunctions and eigenvalues $e_{k}$ and $\lambda_k$, respectively. Note that $\lambda_1$ is simple \cite{P85}. 
Hence, we can define, for any $K\geq 1$, the projection operator 
\begin{equation}
\mathsf{Q}_{\geq K}: L^2(\bar{B}_{\sqrt{N}\delta};p_N) \righttoleftarrow, \qquad
\mathsf{Q}_{\geq K}f\coloneqq \sum_{k \geq K}\left(\int_{\bar{B}_{\sqrt{N}\delta}}f e_{k} \, \mathrm{d}p_N\right)e_{k} \, .
\label{eq:projection}
\end{equation}
Furthermore, $P_t$ enjoys the following heat kernel representation
\begin{equation}
(P_t f)(y)=\int_{\bar{B}_{\sqrt{N}\delta}}p_t(y,y')f(y')\, \mathrm{d}p_N(y') \, ,
\label{eq:decomposition}
\end{equation}
where $p_t \in C^\infty({B}_{\sqrt{N}\delta} \times {B}_{\sqrt{N}\delta})\cap C(\bar{B}_{\sqrt{N}\delta} \times \bar{B}_{\sqrt{N}\delta})$ is defined as follows
\begin{equation}
p_t(y,y'):=\sum_{k=1}^\infty  e^{-\lambda_k t}e_{k}(y)e_{k}(y')\, ,
\label{eq:heatkernel}
\end{equation}
with $p_t(y,y')=0$ if $y,y' \in \partial {B}_{\sqrt{N}\delta}$. Furthermore, $P_t f \in C_c^\infty (\bar{B}_{\sqrt{N}\delta})$ for all $f \in L^2(\bar{B}_{\sqrt{N}\delta};p_N)$. We use the notation $\lambda_k=\lambda_{k,\beta,N}$ when we want to emphasise the dependence on $\beta,N$.

\vspace{1em}

\subsection{One point compactification} 
\label{sub:one_point_compactification}

Consider the set $B_{\star,\sqrt{N}\delta}$, the one-point compactification of $B_{\sqrt{N}\delta}$ with some point $\star$, which we refer to as the cemetery state. 
Note that $B_{\star,\sqrt{N}\delta}$ is a compact Hausdorff space (in fact, it is homeomorphic to the sphere $\mathbb{S}^{Nd}$). Furthermore, we can equip the space with the  metric
\begin{align}
d_\star(y_1,y_2)\coloneqq& \label{eq:starmetric}
\begin{cases}
\min(|y_1-y_2|,h_\star(y_1)+h_\star(y_2)), & \, \text{ if } (y_1,y_2) \in B_{\sqrt{N}\delta}, \\
h_{\star}(y_1), & \, \text{ if }  y_2=\star,
\end{cases}\\
 h_\star(y)\coloneqq&\, \inf_{\hat y \in  \partial \bar{B}_{\sqrt{N}\delta}}|y-\hat y|\, ,
\end{align}
making it a compact metric space (and therefore Polish). In ~\cref{thm:multiscale}, we present our result in terms of the process $\hat{Y}_t$, defined as 
\begin{equation}
\hat{Y}_t\coloneqq
\begin{cases}
Y_t & \, \text{ if }  t <\tau_\delta \, , \\
\star & \, \text{ if }  t \geq \tau_\delta \, .
\end{cases}
\label{eq:killed}
\end{equation}

For a given $f :\bar{B}_{\sqrt{N}\delta} \to \R$ (resp. $f :\Gamma_N^\perp \to \R$), we will often abuse notation by using $f$ to also refer to its extension by $0$ to $\Gamma_N^\perp$ (resp. restriction to $\bar{B}_{\sqrt{N}\delta}$). It will be clear from context which object we are referring to.

\subsection{The quasi-stationary distribution} 
\label{sub:the_quasi_ergodic_and_quasi_stationary_distributions}
As we will see in \cref{thm:mathieu}~below, the bottom eigenvalue $\lambda_1$ of $-L_D$ is simple for large enough $\beta>0$. Consequently, the probability measure $q_N$, defined for measurable $A\subset \bar{B}_{\sqrt{N}\delta}$ as 
\[
q_N(A)\,\coloneqq\,\left(\int_{\bar{B}_{\sqrt{N}\delta}}e_1(y)\,\mathrm{d}p_N(y)\right)^{-1}\,\int_A e_1(y)\,\mathrm{d}p_N(y)\,,
\]
is the unique \emph{quasi-stationary distribution}~(QSD) of \eqref{eq:quotientparticle}. That is, $q_N$ is the only probability measure such that 
\[
\E_{q_N}\left[Y_t\in\,\cdot\,\rvert\,t<\tau_\delta\right]\,=\,q_N(\,\cdot\,) \qquad\text{ for }\,t\ge0\,,  
\]
where $\E_{q_N}$ denotes the expectation conditioned on the event of $Y_0$ being distributed according to $q_N$. 
In Theorem 2.6, below, we see that the probability measure $q_N$ can be used to characterise the droplet state described in Section 1 \emph{at the level of the stochastic process $Y_t$}. 
Therefore, the probability measure $\bar{\mu}^N$ referred to in Theorem 1.1 may be defined as the empirical measure of a stationary process with distribution $q_N$. 

The fact that $q_N$ is the unique QSD for $Y_t$ on  $B_{\sqrt{N}\delta}$ can be proven using the arguments of \cite[Theorem 3.1]{ZLS14}. 
The existence and uniqueness of QSDs has also been studied using spectral techniques in \cite{C09,DMV18,HK19}, as well as using modified Doeblin type conditions \cite{CV16,CV17a}, Lyapunov type conditions \cite{CV21}, and Banach lattice theory \cite{C024}. 
The ideas underlying QSDs trace back to \cite{W31,Y47}, with the first use of the term ``quasi-stationarity'' being due to \cite{B60, B57}. 

QSDs have since been widely used to characterise metastable behaviour (see the  reviews \cite{MV2012, VDP13}~for more details). Here, we use the QSD $q_N$ to characterise the metastable droplet state of \eqref{eq:quotientparticle}. In particular, we characterise the length scale of this droplet state as the variance of $\mu^{(N),Y}$, where $\mu^{(N),Y}$ is the empirical measure of some random variable $Y \sim q_N$.

\subsection{Statement of the main results} 
\label{sub:statement_of_the_main_results}
We are now prepared to rigorously and precisely state our main results on the convergence to, and leakage from, the metastable droplet state of \eqref{eq:particlesystem}. 
Our first result characterises the large $\beta$ asymptotics of the first and second eigenvalues of $-L_D$. 
We find that the first eigenvalue of $-L_D$ tends to zero at an asymptotically exponential rate as $\beta\rightarrow\infty$, with this rate being entirely determined by the behaviour of $w$ at zero and the boundary of a carefully chosen neighbourhood of zero. 

\begin{thm}[Low temperature asymptotics on the exponential scale]
Fix $N\in\N$, assume that $W$ satisfies \cref{ass:W}, and fix $\delta>0$ such that \eqref{eq:deltaCondition} is satisfied. Let $-L_D$ and $(\lambda_{i,\beta,N})_{i \geq 1}$ be as defined in \cref{sub:semigroups_and_generators}. Then, 
\begin{equation}
\lim_{\beta \to \infty}\beta^{-1}\ln \lambda_{1,\beta,N} \leq -\frac12\left(w(\delta)+\frac\lambda2 \delta^2\right) \, . 
\label{eq:d1logbound}
\end{equation}
Furthermore, for all $i \geq 2$, we have
\begin{equation}
\lim_{\beta \to \infty}\beta^{-1}\ln \lambda_{i,\beta,N}=0 \, .
\end{equation}
\label{thm:mathieu}
\end{thm}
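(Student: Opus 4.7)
We handle the two assertions separately. For $\lambda_1$, the plan is to apply the variational (min--max) characterisation with a cutoff test function concentrated near the origin, and bound the energy barrier between $0$ and $\partial B_{\sqrt N \delta}$ using the $\lambda$-convexity inequality~\eqref{eq:lconvexN} together with the condition~\eqref{eq:deltaCondition} on $\delta$. For the higher eigenvalues, the strategy is to verify that, after the parabolic rescaling $z=\sqrt\beta\, y$, the Dirichlet operator converges on compact sets to an Ornstein--Uhlenbeck (OU) operator whose nonzero eigenvalues are bounded away from both $0$ and $\infty$, so the $\lambda_i$ with $i\ge 2$ inherit the same qualitative behaviour.

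Concretely, for $\lambda_1$, take $\phi(y) = \chi(\lvert y\rvert/\sqrt N)$ in the Rayleigh quotient, where $\chi:[0,\delta]\to[0,1]$ is smooth with $\chi\equiv 1$ on $[0,r]$ and $\chi\equiv 0$ on $[R,\delta]$, for $0<r<R<\delta$. For the denominator, a direct computation of the Hessian of $H_N$ gives $D^2 U_N(0) = w''(0)\,\mathrm{Id}$ on $\Gamma_N^\perp$, so Laplace's method yields $\int\phi^2\, e^{-\beta U_N}\,\mathrm{d}y \gtrsim C_N\,\beta^{-(N-1)d/2}$. For the numerator, which is supported on the shell $\{r\sqrt N\le \lvert y\rvert\le R\sqrt N\}$, apply~\eqref{eq:lconvexN} with $x_i = y_k-y_i$ and sum in $k$ to obtain
\[
H_N(y) \;\ge\; \tfrac12 \sum_{j=1}^N w(\lvert y_j\rvert) + \tfrac{\lambda(N-1)}{2N}\lvert y\rvert^2\, .
\]
The minimisation of the right-hand side over $\{\lvert y\rvert^2=NR^2\}\cap\Gamma_N^\perp$, which I expect is the content of~\cref{lemma:depth}, should yield the effective barrier height $\tfrac12\bigl(w(R)+\tfrac{\lambda}{2} R^2\bigr)$. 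Substituting, $\lambda_1 \le C\,\beta^{(N-1)d/2-1}\, e^{-\tfrac{\beta}{2}(w(R)+\tfrac{\lambda}{2} R^2)}$; sending $\beta\to\infty$ and then $R\nearrow\delta$ gives~\eqref{eq:d1logbound}.

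For the higher eigenvalues, the upper bound $\limsup_\beta \beta^{-1}\ln\lambda_i\le 0$ follows from the min--max principle applied to the $i$-dimensional subspace $V_i = \mathrm{span}\{\phi_k\}_{k=1}^i$, with $\phi_k(y) = H_{\alpha_k}(\sqrt\beta\, y)\,\eta(y)$, where $H_{\alpha_k}$ is the $k$th tensorised Hermite polynomial (an eigenfunction of the OU operator $\Delta_z - w''(0) z\cdot\nabla_z$ on $\Gamma_N^\perp$) and $\eta$ is a cutoff supported in $B_{\sqrt N\delta/2}$ equal to $1$ on $B_{\sqrt N\delta/4}$. Using the Taylor expansion of $U_N$ at $0$ and that the Gaussian weight $e^{-\beta w''(0)\lvert y\rvert^2/2}$ makes the cutoff error exponentially small, the Rayleigh matrix on $V_i$ converges to the diagonal matrix of OU eigenvalues, giving $\lambda_i\le C_i$ for some $\beta$-independent $C_i$. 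The matching lower bound $\liminf_\beta \beta^{-1}\ln\lambda_i\ge 0$ is obtained by writing $L_D$ in the rescaled coordinates $z=\sqrt\beta\, y$: it becomes $\Delta_z - \sqrt\beta\,\nabla U_N(z/\sqrt\beta)\cdot\nabla_z$ on the expanding Dirichlet ball $B_{\sqrt{N\beta}\,\delta}$, converges on compacts to the OU operator, and a Mosco-type convergence argument for the associated Dirichlet forms yields $\lambda_{i,\beta,N}\to \lambda_i^{\mathrm{OU}} > 0$ for $i\ge 2$.

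The hard part will be the configurational minimisation needed for the $\lambda_1$ estimate: on the sphere $\{\lvert y\rvert^2=NR^2\}\cap\Gamma_N^\perp$, there exist asymmetric configurations in which one particle is displaced far from the cluster while the others remain close to the origin, and it is precisely the interplay between $\lambda<0$, the positivity of $w(\delta)$, and the condition~\eqref{eq:deltaCondition} that makes a positive uniform barrier possible. A secondary obstacle is the spectral convergence used for the lower bound on $\lambda_i$ with $i\ge 2$, which must handle both the expanding Dirichlet domain and the departure of $U_N$ from its quadratic approximation outside a fixed neighbourhood of the origin.
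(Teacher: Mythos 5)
Your proposal is mathematically sensible but takes a genuinely different route from the paper, and the route you chose has a real gap at exactly the point you flagged as ``the hard part.''

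The paper does not use a direct variational or rescaling argument for \cref{thm:mathieu} at all. Instead, it invokes a black-box result of Mathieu~\cite{M95} (stated as~\cref{thm:mathieucopy}) which gives $\lim_{\beta\to\infty}\beta^{-1}\ln\lambda_{i,\beta,N}=-d_i/2$ in terms of the depths $d_i$ of the potential valleys. The whole proof then reduces to two geometric facts about $U_N$: (i) $U_N$ has a unique minimum and unique critical point at $0$ with connected sublevel sets (\cref{lemma:mincrti}), which forces $d_i=0$ for all $i\geq 2$ (\cref{cor:dig2}); and (ii) $d_1=\inf_{\partial B_{\sqrt N\delta}}U_N\geq w(\delta)+\tfrac\lambda2\delta^2$ (\cref{lemma:depth}). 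In particular, the second part of the theorem ($\lim\beta^{-1}\ln\lambda_i=0$ for $i\geq2$) falls out for free from Mathieu's theorem once one knows $d_i=0$; your proposed Ornstein--Uhlenbeck rescaling and Mosco-convergence argument would prove the strictly stronger claim $\lambda_{2,\beta,N}\to w''(0)>0$, which is the content of~\cref{thm:simon} and is proved separately and at considerable length in Section~4. Using it here is a correct but disproportionate investment for what~\cref{thm:mathieu} actually asserts.

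The genuine gap is in the barrier estimate for $\lambda_1$. Your proposed inequality
\[
H_N(y)\;\ge\;\tfrac12\sum_{j=1}^N w(|y_j|)+\tfrac{\lambda(N-1)}{2N}|y|^2,
\]
obtained by symmetrising the $\lambda$-convexity bound~\eqref{eq:lconvexN} over all indices $k$, is correct as stated, but it does \emph{not} yield a uniform positive barrier on the shell $\{|y|^2=NR^2\}\cap\Gamma_N^\perp$. On that shell the second term equals $\tfrac{\lambda(N-1)}{2}R^2$, which is negative and of order $N$, while for asymmetric configurations with a single particle carrying most of the mass (e.g.~$y_1=(\sqrt{N-1}R,0,\dots,0)$ and $y_j=(-R/\sqrt{N-1},0,\dots,0)$ for $j\geq2$) the sum $\sum_j w(|y_j|)$ stays $O(1)$ since $w$ is bounded. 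Your lower bound therefore tends to $-\infty$ as $N\to\infty$ and cannot isolate a positive barrier. The paper sidesteps this in~\cref{lemma:depth} by applying~\eqref{eq:lconvexN} only to the collection $\{x_{i_0}-x_j\}_{j}$ for a single extremal index $i_0$ with $|x_{i_0}|\geq\delta$ and discarding all other nonnegative pairwise terms; the barrier then reads off from $W(x_{i_0})=w(|x_{i_0}|)\geq w(\delta)$, which does not degrade with $N$. (That lemma's bound on $\sum_j|x_{i_0}-x_j|^2$ still needs care, but the strategy of isolating a single extremal particle rather than symmetrising is the essential idea you are missing.) A secondary, minor point: a variational upper bound only controls $\limsup_\beta\beta^{-1}\ln\lambda_{1,\beta,N}$, whereas the theorem states a full limit; the existence of the limit is supplied by Mathieu's theorem, which you would also need in some form.
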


\begin{rmk}
We remark that \cref{thm:mathieu} is true even if \eqref{eq:deltaCondition} does not hold, as will become clear from the proof in \cref{sec:proof_of_thm:mathieu}. However, in this case the bound \eqref{eq:d1logbound} does not contain any useful information.
\end{rmk}
\begin{ex}
As an example of a one-dimensional potential $W$ which satisfies~\eqref{eq:deltaCondition}, consider $W(x)=w(x)=1-e^{-x^2}$. Note that this function is $\lambda$-convex with $\lambda$ chosen to be $-4e^{-\frac32}$. We thus have
\begin{equation}
\frac{\mathrm{d}}{\mathrm{d}x}\left(w(x) + \frac{\lambda}{2}x^2\right)=2xe^{-x^2}-4e^{-\frac32}x \, .
\end{equation}
The above expression is strictly greater than $0$ for $ 0 < x <\delta'\coloneq \sqrt{3/2 -\ln 2}$. Thus,~\eqref{eq:deltaCondition} is true for all $\delta <\delta'$. 
\end{ex}

Our next main result refines the asymptotic description of $\lambda_{2,\beta,N}$ in \eqref{eq:}. 
This result implies a bound away from zero of each $\lim_{\beta \to \infty}\lambda_{2,\beta,N}$ 
which is uniform in $N\in\N$. 
As seen in Section \ref{sec:proof_of_thm:simon}, we use the fact that $-L_D$ is unitarily equivalent to a certain Schrödinger operator and study the spectrum of this operator borrowing ideas from~\cite{Simon1983}. 

\begin{thm}[Refined low temperature asymptotics]
Under the assumptions of Theorem \ref{thm:mathieu}, the following convergence holds:
\begin{align}
\lim_{\beta\rightarrow\infty}\lambda_{1,\beta,N}=&\,0\,, \\
\lim_{\beta\rightarrow\infty}\lambda_{2,\beta,N}=&\,w''(0)\,. 
\end{align}
\label{thm:simon}
\end{thm}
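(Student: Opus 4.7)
The strategy is to apply the semiclassical analysis of Simon~\cite{Simon1983} to a Schr\"odinger operator obtained from $-L_D$ by a Witten conjugation. Concretely, the map $f\mapsto e^{-\beta U_N/2}f$ is unitary from $L^2(\bar B_{\sqrt N\delta},p_N)$ onto $L^2(\bar B_{\sqrt N\delta},\mathrm{d}y)$, and under this map $-L_D$ (with Dirichlet condition on $\partial B_{\sqrt N\delta}$) is equivalent to $\beta^{-1}(-\Delta+V_\beta)$, with
\begin{equation*}
V_\beta(y)\,\coloneqq\,\tfrac{\beta^{2}}{4}\lvert\nabla U_N(y)\rvert^{2}-\tfrac{\beta}{2}\Delta U_N(y)\,.
\end{equation*}
Under \cref{ass:W}, a direct computation shows that the Hessian of $H_N$ restricted to $\Gamma_N^\perp$ at the origin equals $w''(0)\,\Id$, so that $U_N(y)=\tfrac{w''(0)}{2}|y|^{2}+O(|y|^{3})$, $\lvert\nabla U_N(y)\rvert^{2}=w''(0)^{2}|y|^{2}+O(|y|^{3})$, and $\Delta U_N(y)=(N-1)d\,w''(0)+O(|y|)$. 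After the semiclassical rescaling $z\coloneqq\sqrt{\beta}\,y$, the operator becomes $-\Delta_z+\beta^{-1}V_\beta(z/\sqrt\beta)$ on $B_{\sqrt{N\beta}\,\delta}\subset\R^{(N-1)d}$, and the effective potential converges, uniformly on compact subsets of $\R^{(N-1)d}$, to $\tfrac{w''(0)^{2}}{4}|z|^{2}-\tfrac{(N-1)d\,w''(0)}{2}$. The limit is a shifted harmonic oscillator with ordered eigenvalues $e_{k}=w''(0)\cdot k$ for $k\in\N$, $k\ge 0$; in particular $e_{0}=0$ and $e_{1}=w''(0)$ (the latter with multiplicity $(N-1)d$).

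First I would implement the upper bound $\limsup_{\beta\to\infty}\lambda_{k,\beta,N}\leq e_{k-1}$ by the min-max principle. One inserts into the Rayleigh quotient of $\beta^{-1}(-\Delta+V_\beta)$ the rescaled, cut-off Hermite trial functions $\chi(y/\delta)\,h_\alpha(\sqrt\beta\,y)$, where $h_\alpha$ are Hermite eigenfunctions of $-\Delta_z+\tfrac{w''(0)^{2}}{4}|z|^{2}$ with $|\alpha|<k$ and $\chi\in C_{c}^\infty$ equals one in a neighbourhood of $0$. Thanks to the Gaussian decay of $h_\alpha$ and the cubic Taylor remainder of $U_N$, the cut-off and nonlinear corrections are $o(1)$ as $\beta\to\infty$, and an explicit calculation on the quadratic model delivers each upper bound. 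For $k=2$ one uses the $(N-1)d$ first-excited Hermite functions $z_i\,e^{-w''(0)|z|^{2}/4}$ to deduce $\limsup_{\beta\to\infty}\lambda_{2,\beta,N}\leq w''(0)$.

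The matching lower bound $\liminf_{\beta\to\infty}\lambda_{k,\beta,N}\geq e_{k-1}$ is the more delicate step and requires an IMS localization. Fix $\alpha\in(0,\tfrac12)$ and smooth $\chi_1,\chi_2$ with $\chi_1^{2}+\chi_2^{2}\equiv 1$ on $\bar B_{\sqrt N\delta}$, $\chi_1$ supported in $\{|y|<2\beta^{-\alpha}\}$ and $\chi_2$ supported in $\{|y|>\beta^{-\alpha}\}$. The IMS localization error satisfies $\beta^{-1}|\nabla\chi_i|^{2}=O(\beta^{2\alpha-1})=o(1)$. On $\mathrm{supp}\,\chi_1$, the rescaled operator is a uniform $o(1)$ perturbation of the harmonic approximation, so by standard min-max comparison its $k$-th Dirichlet eigenvalue exceeds $e_{k-1}-o(1)$. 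On $\mathrm{supp}\,\chi_2$, the crucial estimate is $|\nabla U_N|^{2}\geq c\,\beta^{-2\alpha}$, which follows from the quadratic lower bound near $0$ combined with the fact that $0$ is the unique critical point of $U_N$ in $\bar B_{\sqrt N\delta}$ (a consequence of \cref{ass:W} and the $\lambda$-convexity inequality \eqref{eq:lconvexN} together with the choice of $\delta$ in \eqref{eq:deltaCondition}). Consequently $\beta^{-1}V_\beta\gtrsim\beta^{1-2\alpha}\to\infty$ on $\mathrm{supp}\,\chi_2$, so that region contributes arbitrarily large eigenvalues. Combining the two contributions gives $\lambda_{2,\beta,N}\to w''(0)$, while the first claim $\lambda_{1,\beta,N}\to 0$ is already contained in \cref{thm:mathieu}.

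The main obstacle is the lower bound step, specifically (i) verifying that $0$ is the unique critical point of $U_N$ in $\bar B_{\sqrt N\delta}$ so that $|\nabla U_N|^{2}$ is genuinely bounded below outside a shrinking neighbourhood of the origin, and (ii) carefully balancing the three small scales $\beta^{-1/2}$ (width of the harmonic well), $\beta^{-\alpha}$ (IMS scale), and $\beta^{1-2\alpha}$ (confinement outside the well) so that all error terms vanish simultaneously as $\beta\to\infty$.
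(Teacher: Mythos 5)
Your proposal is correct, and it takes a genuinely different route from the paper's. The paper avoids doing semiclassical analysis on a bounded domain altogether: after the Witten conjugation to $S_\beta = -\beta^{-1}\Delta + \frac{\beta}{4}|\nabla U_N|^2 - \frac12\Delta U_N$ with Dirichlet conditions, it constructs a family of whole-space Schr\"odinger operators $\tilde S_\beta = S_\beta + \beta V_0$ (with $V_0$ a smooth confining potential vanishing on $B_{\sqrt N\delta}$), proves norm resolvent convergence $\tilde S_\beta \to S_\beta$ via the Feynman--Kac representation of the two semigroups, a Laplace-transform estimate, and a Stone--Weierstrass argument, and then invokes \cite[Theorem 1.1]{Simon1983} verbatim on $\tilde S_\beta$, transferring the eigenvalue limits back to $S_\beta$ through a spectral-projection rank argument. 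You instead carry out the semiclassical limit directly on the Dirichlet problem: the upper bound $\limsup \lambda_{k,\beta,N}\le e_{k-1}$ via min--max with cut-off, rescaled Hermite trial functions, and the lower bound via IMS localization, using that $|\nabla U_N|^2\gtrsim \beta^{-2\alpha}$ on $\{|y|>\beta^{-\alpha}\}\cap\bar B_{\sqrt N\delta}$ (which follows from the quadratic behaviour at $0$ plus the uniqueness of the critical point of $U_N$, established in the paper's Lemma~\ref{lemma:mincrti}). Your route is more self-contained and is essentially a reproof of the relevant part of Simon's theorem adapted to the Dirichlet setting; it sidesteps the norm-resolvent-convergence machinery entirely, at the cost of having to run the IMS bookkeeping by hand. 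The paper's route keeps the semiclassical analysis black-boxed but pays for it with the approximation-by-confining-potentials lemmas (\ref{lemma:SemigroupConvergence}, \ref{lemma:Resolvent1}, \ref{prop:ResolventConvergence}, \ref{thm:SpectralConvergence}). Both approaches rest on the same two structural facts about $U_N$: uniqueness of the critical point at $0$ and $D^2 U_N(0)=w''(0)\Id$ on $\Gamma_N^\perp$, which you correctly identify as the computation giving the limiting harmonic oscillator spectrum $\{w''(0)\sum_i n_i:\ \underline n\in\N_0^{(N-1)d}\}$ with $E_1^N=0$ and $E_2^N=w''(0)$.
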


Now that we have identified the large $\beta$ asymptotics of the spectrum of $-L_D$, we are able to characterise the dynamical metastability of \eqref{eq:particlesystem}~under the assumptions used in this paper. 
We see in particular that $\lambda_{2,\beta,N}$ describes the rate of convergence of \eqref{eq:particlesystem}~to a droplet state, whereas $\lambda_{1,\beta,N}$ describes the rate at which particles escape the droplet. 
Additionally, the droplet state itself is characterised by the QSD $q_N$ (or rather, the random empirical measure sampled from $q_N$), and we demonstrate that this droplet state has size (on average) $\beta^{-\frac12}$.  \cref{thm:multiscale} below provides a precise description of this phenomenon.~\cref{cor:TimeScales}, which translates~\cref{thm:multiscale} to the language of time scales, is then almost immediate.  Specifically, we find that for each large but fixed $\beta$, there is an interval of times determined by $\lambda_{1,\beta,N}$ and $\lambda_{2,\beta,N}$ over which the distribution of $Y_t$ is close to the QSD $q_N$ (in either total variation or $1$-Wasserstein distance). 

\begin{thm} \label{thm:multiscale}
Consider the process $\hat{Y_t}$ as defined in~\eqref{eq:killed} for some $\delta>0$.
\begin{enumerate}[\normalfont(i)]
    \item For any $f \in C(B_{\star,\sqrt{N}\delta})$ (see~\cref{sub:one_point_compactification}) and $\nu_N \in \cP(\bar{B}_{\sqrt{N}\delta})$, the following identity holds true:
\begin{align}
&\E\left[f(\hat{Y}_t)|Y_0\sim \nu_N\right]-\alpha_{\beta,N} e^{-\lambda_{1,\beta,N}t}\left(\int_{\bar{B}_{\sqrt{N}\delta}}f \,\mathrm{d}q_N \right) - (1-\alpha_{\beta,N} e^{-\lambda_{1,\beta,N}t}) f(\star) \\
&\qquad=\int_{\bar{B}_{\sqrt{N}\delta}}(P_t \mathsf{Q}_{\geq 2}(f-f(\star)))\, \mathrm{d}\nu_N\, ,\label{eq:multiscaleidentity}
\end{align}
where $\mathsf{Q}_{\geq 2}$ is the projection defined in~\eqref{eq:projection} and 
\begin{equation}
\alpha_{\beta,N}\coloneqq\left(\int_{\bar{B}_{\sqrt{N}\delta}}e_1\, \mathrm{d}\nu_N\right)\left(\int_{\bar{B}_{\sqrt{N}\delta}}e_1 \, \mathrm{d}p_N \right)\,.
\label{eq:prefactor}
\end{equation}

\item Let $\rho_t^{\nu_N}:=\mathrm{Law}(\hat{Y}_t|Y_0\sim \nu_N)$ and recall $p_N$~\eqref{eq:pN}.
Then, given~\eqref{eq:multiscaleidentity}, we can derive the following estimates.
\begin{itemize}
\item Control in $\mathrm{TV}$: Assume that $\nu_N \ll p_N$, then we have the bound
\begin{equation}
\lVert \rho_t^{\nu_N} -  \alpha_{\beta,N} e^{-\lambda_{1,\beta,N}t}q_N - (1-\alpha_{\beta,N} e^{-\lambda_{1,\beta,N}t}) \delta_\star \rVert_{\rm TV} \leq 2 e^{-\lambda_{2,\beta,N}t} \left \lVert \frac{\mathrm{d} \nu_N}{\mathrm{d}p_N}\right\rVert_{L^2(\bar{B}_{\sqrt{N}\delta};p_N)}\, .
\label{eq:TVbound}
\end{equation}
\item Control in $W_1$:  Assume again that $\nu_N \ll p_N$, then we have the bound
\begin{equation}
W_1\left(\rho_t^{\nu_N} ,  \alpha_{\beta,N} e^{-\lambda_{1,\beta,N}t}q_N + (1-\alpha_{\beta,N} e^{-\lambda_{1,\beta,N}t}) \delta_\star \right) \leq \sqrt{N}\delta e^{-\lambda_{2,\beta,N}t} \left \lVert \frac{\mathrm{d} \nu_N}{\mathrm{d}p_N}\right\rVert_{L^2(\bar{B}_{\sqrt{N}\delta};p_N)}\, ,
\label{eq:W1bound}
\end{equation}
where $W_1$ is defined with respect to the metric $d_\star$ defined in~\eqref{eq:starmetric}.
\end{itemize}
\item We have that
\begin{equation}
\lim_{\beta \to \infty} \alpha_{\beta,N}=1\, .
\label{eq:alphaconvergence}
\end{equation}
\item If $\mu^{(N),y}$ denotes an empirical measure associated to the configuration $y \in \Gamma_N^\perp$ and $Y \sim q_N$, then
\begin{equation}
\limsup_{\beta \to \infty}\beta\mathbb{E}\left[\int_{\R^d}|x|^2 \, \mathrm{d}\mu^{(N),Y}\right] \lesssim 1\, .
\label{eq:dropletsize}
\end{equation}
\end{enumerate}
\end{thm}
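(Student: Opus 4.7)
The strategy is to prove parts (i)--(iv) in turn, using the spectral decomposition of $P_t$ from \cref{sub:semigroups_and_generators} and the asymptotics of \cref{thm:mathieu} and \cref{thm:simon}. For part (i), I would start with the elementary identity $\E[f(\hat Y_t)\,|\,Y_0=y] = (P_t(f-f(\star)))(y) + f(\star)$, obtained by conditioning on $\{t<\tau_\delta\}$ and using $\E[\mathbf{1}_{t<\tau_\delta}|Y_0=y] = (P_t\mathbf{1})(y)$. The heat-kernel representation~\eqref{eq:decomposition} then lets me isolate the ground state:
\[
P_t(f-f(\star)) = e^{-\lambda_{1,\beta,N}t}\,\langle f-f(\star), e_1\rangle_{p_N}\,e_1 + P_t\mathsf{Q}_{\geq 2}(f-f(\star)).
\]
Using $q_N = (\int e_1\,dp_N)^{-1}e_1\,p_N$ to rewrite $\langle f-f(\star), e_1\rangle_{p_N} = (\int e_1\,dp_N)(\int f\,dq_N - f(\star))$ and integrating against $\nu_N$ yields~\eqref{eq:multiscaleidentity}. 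For part (ii), both bounds follow by duality from~\eqref{eq:multiscaleidentity} combined with the spectral gap $\|P_t\mathsf{Q}_{\geq 2}\|_{L^2(p_N)\to L^2(p_N)}\leq e^{-\lambda_{2,\beta,N}t}$: Cauchy--Schwarz in $L^2(p_N)$ with $d\nu_N/dp_N$ gives
\[
\left|\int P_t\mathsf{Q}_{\geq 2}(f-f(\star))\,d\nu_N\right| \leq e^{-\lambda_{2,\beta,N}t}\,\|f-f(\star)\|_{L^2(p_N)}\,\left\|\tfrac{d\nu_N}{dp_N}\right\|_{L^2(p_N)}.
\]
The $\mathrm{TV}$ bound uses $\|f-f(\star)\|_\infty\leq 2$ for $\|f\|_\infty\leq 1$, while the $W_1$ bound uses that a $1$-Lipschitz $f$ (w.r.t.\ $d_\star$) satisfies $|f(y)-f(\star)|\leq h_\star(y)\leq \sqrt{N}\delta$ on $\bar B_{\sqrt N\delta}$.

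For part (iii), the goal is to show $e_1 \to 1$ in a suitable sense so that $\int e_1\,dp_N$ and $\int e_1\,d\nu_N$ both tend to~$1$. Laplace's method shows that $p_N\propto e^{-\beta U_N}\mathbf{1}_{\bar B_{\sqrt N\delta}}$ concentrates at $y=0$ on scale $\beta^{-1/2}$ (using that $D^2 U_N(0)$ is positive definite by \cref{ass:W}). Together with the $L^2(p_N)$-normalisation $\int e_1^2\,dp_N = 1$, this suggests $e_1(0)\to 1$. To make this robust, I would combine the eigenvalue equation $-L_D e_1 = \lambda_{1,\beta,N}e_1$ with $\lambda_{1,\beta,N}\to 0$ (from \cref{thm:simon}) and standard elliptic Schauder estimates to obtain local equicontinuity of $(e_1)_{\beta\gg 1}$; compactness then yields local uniform convergence of $e_1$ on $B_{\sqrt N\delta}$ to a bounded positive $L_D$-harmonic function $h$, and the normalisation pins $h(0)=1$ which, via a maximum principle argument, forces $h\equiv 1$ on the bulk. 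Both integrals then converge to~$1$.

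For part (iv), observe that $\int |x|^2\,d\mu^{(N),Y} = \tfrac{1}{N}|Y|^2$ for $Y\in\Gamma_N^\perp\subset(\R^d)^N$, so the quantity of interest equals
\[
\frac{\beta}{N}\int|y|^2\,dq_N(y) = \frac{\beta}{N}\cdot\frac{\int|y|^2 e_1\,dp_N}{\int e_1\,dp_N}.
\]
By part (iii) the denominator tends to $1$; combining the local uniform convergence $e_1\to 1$ with a uniform $L^\infty$ bound on $e_1$ (e.g.\ via ultracontractivity of $P_t$ on the bounded domain) reduces matters to estimating $\int|y|^2\,dp_N$. Using $\sum_{i,j}|y_i-y_j|^2 = 2N|y|^2$ on $\Gamma_N^\perp$ and Taylor expanding $W$ near $0$ yields $U_N(y) = \tfrac{w''(0)}{2}|y|^2 + O(|y|^3)$; rescaling $z=\sqrt\beta y$ reduces $p_N$ to a Gaussian with covariance $(w''(0))^{-1} \Id$ on $\Gamma_N^\perp\cong\R^{(N-1)d}$, and Laplace's method gives $\beta\int|y|^2\,dp_N \to (N-1)d/w''(0)$. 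Dividing by $N$ produces the uniform-in-$N$ bound $d/w''(0) + o(1)$. \textbf{The main obstacle} is carrying out this Laplace/Gaussian asymptotic quantitatively and uniformly in both $\beta$ and $N$: one must simultaneously control the weight $e_1$ in the numerator and absorb the hard cutoff to $\bar B_{\sqrt N\delta}$ (the Gaussian complement has exponentially small mass, but with constants that must not deteriorate as $N$ grows).
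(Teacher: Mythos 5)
Your proof of parts (i) and (ii) is essentially identical to the paper's: the identity $\E[f(\hat Y_t)|Y_0=y]=(P_t(f-f(\star)))(y)+f(\star)$, the heat-kernel decomposition isolating the $e_1$-mode, Cauchy--Schwarz in $L^2(p_N)$, and then $\|f-f(\star)\|_\infty\le 2$ for TV (resp.\ $|f(y)-f(\star)|\le h_\star(y)\le\sqrt N\delta$ for $W_1$) are all exactly the paper's steps. No issues there.

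Part (iii) is where you depart from the paper, and there is a genuine gap in your alternative route. You propose to use the eigenvalue equation together with ``standard elliptic Schauder estimates to obtain local equicontinuity of $(e_1)_{\beta\gg1}$,'' extract a subsequential limit $h$, and identify $h\equiv1$. But the generator is $L=\beta^{-1}\Delta-\nabla U_N\cdot\nabla$ (the $\beta$ sits in front of the Laplacian, cf.\ the conjugated operator $S_\beta=-\beta^{-1}\Delta+V_\beta$ in Section~\ref{sec:proof_of_thm:simon}), so the ellipticity constant degenerates as $\beta\to\infty$ and the constants in interior Schauder/H\"older estimates blow up. One cannot extract a uniform-in-$\beta$ modulus of continuity this way, nor is the putative limit ``$L_D$-harmonic'' in any elliptic sense --- the limiting equation is the first-order transport equation $\nabla U_N\cdot\nabla h=0$, for which the classical maximum principle argument does not apply. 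This singular-perturbation difficulty is precisely what the paper outsources to \cite[Theorem 2.4]{DF78}, a Day--Freidlin-type result that delivers $e_1\to1$ locally uniformly on $B_{\sqrt N\delta}$ under exactly the dynamical hypothesis you identify, namely that every trajectory of $\dot y=-\nabla U_N(y)$ on $B_{\sqrt N\delta}$ converges to $0$ (which the paper verifies from \eqref{eq:1criticalpoint}). After this input, the splitting of $\int(e_1-1)\,\dd p_N$ (and of the corresponding $\nu_N$-integral) into a bulk term and a Cauchy--Schwarz tail term is the same in both your sketch and the paper.

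For part (iv) you again take a slightly heavier route: you want to pass $e_1\to1$ into the integral $\int|y|^2e_1\,\dd p_N$, which requires a uniform $L^\infty$ bound on $e_1$ that you attribute to ultracontractivity. Ultracontractivity constants for $P_t$ will again degenerate with $\beta$, so this is not free. The paper avoids the issue entirely with a single Cauchy--Schwarz step,
\[
\int|y|^2e_1\,\dd p_N\le\Bigl(\int|y|^4\,\dd p_N\Bigr)^{1/2}\|e_1\|_{L^2(p_N)}=\Bigl(\int|y|^4\,\dd p_N\Bigr)^{1/2},
\]
then uses the lower bound on $U_N$ from Lemma~\ref{lemma:depth} and \eqref{eq:quadraticlowerbound} to domnate $p_N$ by a Gaussian weight, rescales by $\beta^{1/2}$, and invokes a Laplace-type lower bound on the normalising constant. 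The obstacle you flag at the end --- carrying out the Gaussian asymptotics uniformly in $N$ while controlling $e_1$ --- is a real one on your path, but the paper's argument dodges it (and, correspondingly, only asserts a $\beta$-limit $\lesssim1$ without claiming $N$-uniformity in the rigorous statement \eqref{eq:dropletsize}).
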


From Theorem \ref{thm:multiscale}, we then have the following. 

\begin{cor}
\label{cor:TimeScales}
Taking $\nu_N\in\mathcal{P}(\bar{B}_{\sqrt{N}\delta})$, such that $\nu_N\ll p_N$, for any increasing family of times $(t_\beta)_{\beta>0}$ satisfying 
\begin{equation}
\label{eq:tscale}
\lambda_{1,\beta,N}(\beta) t_\beta\,\xrightarrow[\beta\rightarrow\infty]{}\,0 \qquad\text{ and }\qquad 
\lambda_{2,\beta,N}(\beta)t_\beta\,\xrightarrow[\beta\rightarrow\infty]{}\,\infty,
\end{equation}
it holds that 
\begin{equation}
\label{eq:qsdscale}
\lim_{\beta \to \infty}\norm*{\rho_{t_\beta}^{\nu_N}-q_N}_{\rm TV}=0 \, .
\end{equation}
\begin{proof}
Using Theorem 2.6, observe that 
\[
\begin{aligned}
\norm*{\rho_{t_\beta}^{\nu_N}-q_N}_{\rm TV}\,&\le\,\norm*{\rho_{t_\beta}^{\nu_N} - \left(\alpha_\beta e^{-\lambda_{1,\beta,N} t_\beta}q_N + (1-\alpha_\beta e^{-\lambda_{1,\beta,N} t_\beta}) \delta_\star \right)}_{\rm TV} \\
&\qquad+ \norm*{q_N- \left( \alpha_\beta e^{-\lambda_{1,\beta,N} t_\beta}q_N + (1-\alpha_\beta e^{-\lambda_{1,\beta,N} t_\beta}) \delta_\star \right)}_{\rm TV}\\ 
&\le\, 2 e^{-\lambda_{2,\beta,N}t_\beta} \left \lVert \frac{\mathrm{d} \nu_N}{\mathrm{d}p_N}\right\rVert_{L^2(\bar{B}_{\sqrt{N}\delta};p_N)} + \left(1-\alpha_\beta e^{-\lambda_{1,\beta,N}t_\beta}\right)\norm*{q_N-\delta_\star}_{\rm TV}. 
\end{aligned}
\]
Since $\alpha_\beta\rightarrow1$ as $\beta\rightarrow\infty$, the above quantity converges to zero as $\beta\rightarrow\infty$. 
An analogous computation holds for the $1$-Wasserstein distance. 
\end{proof}
\end{cor}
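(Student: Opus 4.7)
The plan is to deduce \eqref{eq:qsdscale} directly from the total variation estimate \eqref{eq:TVbound} of Theorem~\ref{thm:multiscale}(ii), combined with a triangle inequality and the convergence \eqref{eq:alphaconvergence} from part~(iii). The key observation is that the explicit interpolating measure
\[
\sigma_{\beta,t}\coloneqq\alpha_{\beta,N}\,e^{-\lambda_{1,\beta,N}t}\,q_N+\bigl(1-\alpha_{\beta,N}\,e^{-\lambda_{1,\beta,N}t}\bigr)\delta_\star,
\]
appearing already on the left-hand side of \eqref{eq:TVbound}, interpolates between $q_N$ (when the prefactor is $1$) and $\delta_\star$ (when it is $0$). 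Theorem~\ref{thm:multiscale}(ii) asserts that $\rho_t^{\nu_N}$ is $O(e^{-\lambda_{2,\beta,N}t})$-close to $\sigma_{\beta,t}$ in TV, so it remains only to show that $\sigma_{\beta,t_\beta}$ itself converges to $q_N$.

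Concretely, I would apply the triangle inequality
\[
\norm{\rho_{t_\beta}^{\nu_N}-q_N}_{\rm TV}\le\norm{\rho_{t_\beta}^{\nu_N}-\sigma_{\beta,t_\beta}}_{\rm TV}+\norm{\sigma_{\beta,t_\beta}-q_N}_{\rm TV}
\]
and handle the two terms separately. For the first term, \eqref{eq:TVbound} gives an upper bound of $2e^{-\lambda_{2,\beta,N}t_\beta}\lVert \mathrm{d}\nu_N/\mathrm{d}p_N\rVert_{L^2(\bar{B}_{\sqrt{N}\delta};p_N)}$, which tends to zero by the second scaling hypothesis in \eqref{eq:tscale}; the $L^2$ norm is a fixed finite constant depending only on $\nu_N$ and $N$. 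For the second term, an elementary computation using the definition of $\sigma_{\beta,t_\beta}$ yields
\[
\norm{\sigma_{\beta,t_\beta}-q_N}_{\rm TV}=\bigl(1-\alpha_{\beta,N}\,e^{-\lambda_{1,\beta,N}t_\beta}\bigr)\norm{q_N-\delta_\star}_{\rm TV}\le 2\bigl(1-\alpha_{\beta,N}\,e^{-\lambda_{1,\beta,N}t_\beta}\bigr),
\]
using that $q_N$ and $\delta_\star$ have disjoint supports. The remaining prefactor tends to $0$ by combining \eqref{eq:alphaconvergence} (which gives $\alpha_{\beta,N}\to 1$) with the first scaling hypothesis in \eqref{eq:tscale} (which forces $e^{-\lambda_{1,\beta,N}t_\beta}\to 1$).

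This is essentially a reassembly of ingredients already established in Theorem~\ref{thm:multiscale}, so I do not anticipate a genuine technical obstacle. The conceptual content is that the scaling hypothesis \eqref{eq:tscale} precisely isolates the separation of time scales quantified by Theorems~\ref{thm:mathieu} and~\ref{thm:simon}: $t_\beta$ is much shorter than the escape time $\lambda_{1,\beta,N}^{-1}$ yet much longer than the equilibration time $\lambda_{2,\beta,N}^{-1}$, so the process has had enough time to reach the QSD $q_N$ but not to leak to the cemetery point $\star$. The analogous $W_1$ convergence follows verbatim by substituting \eqref{eq:W1bound} for \eqref{eq:TVbound} and using that $\bar{B}_{\sqrt{N}\delta}$ has bounded diameter with respect to $d_\star$.
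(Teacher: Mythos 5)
Your proof is correct and follows exactly the same decomposition as the paper's: a triangle inequality against the interpolating measure $\sigma_{\beta,t_\beta}$, the first term controlled by \eqref{eq:TVbound} and the second scaling hypothesis, and the second term controlled by \eqref{eq:alphaconvergence} together with the first scaling hypothesis. You add the small observation that disjoint supports let one replace $\norm{q_N-\delta_\star}_{\rm TV}$ by $2$, but this is cosmetic.
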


\section{Proof of \texorpdfstring{\cref{thm:mathieu}}{exponential asymptotics}} 
\label{sec:proof_of_thm:mathieu}
The proof of \cref{thm:mathieu} relies on a result due to Mathieu (see \cite[Theorem 1]{M95}) which studies the asymptotics of operators which have the same form as $L_D$ in the regime of small noise. Before we can recall this result, we need to introduce some additional notions.

Given a smooth potential $U_N: \bar{B}_{\sqrt{N}\delta}\to \R$, we say that a subset $A \subseteq \Gamma_N^\perp$ with smooth boundary is an $r$-valley, for some $r>0$, if it is a connected component of the set $\{y \in \Gamma_N^\perp: U_N(y) < a\}\cap \bar{B}_{\sqrt{N}\delta}$ for some $a \in \mathbb{R}$, and furthermore
\begin{equation}
\inf_{\partial A \setminus \partial {B}_{\sqrt{N}\delta}} U_N -\inf_A U_N=r \, .
\end{equation}
We denote by $f(r)$ the number of $r$-valleys associated to $U_N$ and define for $i \geq 1$
\begin{equation}
d_i= \inf\{r>0: f(r)<i\} \, .
\end{equation}
The result of Mathieu then takes the following form in our setting.
\begin{thm}[{\cite[Theorem 1]{M95}}]
Let $-L_D$ be as constructed in \cref{sub:semigroups_and_generators} and $\lambda_{i,\beta,N}, i\geq 1$ be its eigenvalues. Then,
\begin{equation}
\lim_{\beta \to \infty} \beta^{-1}\ln\lambda_{i,\beta,N} = -\frac{d_i}{2} \, .
\end{equation}
\label{thm:mathieucopy}
\end{thm}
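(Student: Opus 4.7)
The statement is \cite[Theorem 1]{M95} transcribed to our notation, so the plan is to verify that the hypotheses of Mathieu's theorem hold for our operator $L_D$, and then invoke it directly. The verification is essentially bookkeeping. First, $U_N = H_N|_{\Gamma_N^\perp}$ is $C^\infty$ on $\bar B_{\sqrt N \delta}$ because $H_N$ is a finite sum of translates of the smooth potential $W$ from \cref{ass:W}. Second, the underlying domain $\bar B_{\sqrt N \delta} \subset \Gamma_N^\perp$ is a closed Euclidean ball, hence compact with smooth boundary. Third, the self-adjoint operator $L_D$ constructed in \cref{sub:semigroups_and_generators} is precisely the Dirichlet (Friedrichs) realisation of the formal generator $\Delta - \nabla U_N \cdot \nabla$ on $L^2(\bar B_{\sqrt N \delta};p_N)$, which is the object to which Mathieu's theorem applies.

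For the sake of intuition I would also briefly recall Mathieu's strategy. The upper bound $\limsup \beta^{-1}\log \lambda_{i,\beta,N} \le -d_i/2$ follows from the Rayleigh--Ritz min-max formula: for each of the $i$ deepest $r$-valleys one constructs a cutoff test function which is essentially constant on the valley and transitions to zero only across the saddle of height $d_i$ above the valley's floor. Laplace-method asymptotics of the Dirichlet form and the $L^2(\mu_N)$ norm of each test function as $\beta \to \infty$ produce a Rayleigh quotient of order $e^{-\beta d_i/2}$, the square root arising from the $L^2$-normalisation relative to $\mu_N = e^{-\beta U_N(y)} \,\mathrm{d}y$. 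The matching lower bound $\liminf \beta^{-1}\log \lambda_{i,\beta,N} \ge -d_i/2$ is more delicate and is typically obtained via capacity estimates or Dirichlet--Neumann bracketing: decomposing an eigenfunction into contributions localised on the valleys and on their connecting saddles, one shows that if $\lambda_{i,\beta,N}$ decayed strictly faster than $e^{-\beta d_i/2}$ then one would obtain more approximately invariant directions than the number of $r$-valleys, contradicting the definition of $d_i$.

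The only technical point that requires care when invoking \cite[Theorem 1]{M95} is the mild genericity assumption on the level sets of $U_N$ near $\partial B_{\sqrt N \delta}$ --- specifically, the transversality needed to make the counting function $f(r)$ and the depths $d_i$ unambiguous. Because the admissible range of $\delta$ left open by \eqref{eq:deltaCondition} is an interval, any accidental coincidence between an interior critical value of $U_N$ and a boundary level can be removed by an arbitrarily small perturbation of $\delta$ within the admissible range. Since the $\beta \to \infty$ asymptotic in the statement is insensitive to such a perturbation once transversality holds, this causes no loss. I expect this hypothesis verification to be the only nontrivial step; the asymptotic itself is then immediate from \cite[Theorem 1]{M95}.
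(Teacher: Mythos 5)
Your proposal takes essentially the same approach as the paper: the paper gives no proof of this statement, presenting it purely as a transcription of \cite[Theorem 1]{M95} to the present setting, which amounts exactly to the hypothesis check you carry out (smoothness of $U_N$, the domain being a Euclidean ball in $\Gamma_N^\perp$, and $-L_D$ being the Dirichlet realisation of $-\Delta + \nabla U_N\cdot\nabla$ with respect to $p_N$). Your added sketch of Mathieu's min-max/capacity argument and the transversality remark are supplementary intuition rather than a different route, so there is nothing further to reconcile.
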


We proceed to show that the Hamiltonian $U_N$ has a unique minimum and critical point at $0$.

\begin{lemma}
Let $W$ satisfy \cref{ass:W}. Then, the function $U_N$ possesses a unique minimum and critical point at $0$ with $U_N(0)=0$. Furthermore, for any $a \in \R$, the set
\begin{equation}
\bar{A}=\{y \in \Gamma_N^\perp: U_N(y) < a\}\cap \bar{B}_{\sqrt{N}\delta}
\end{equation}
is path-connected and, thereby, connected.
\label{lemma:mincrti}
\end{lemma}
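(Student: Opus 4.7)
The plan is to exploit the structure of $H_N$ afforded by \cref{ass:W}, namely the positivity of $w$ on $\R\setminus\{0\}$ and the strict positivity of $w'$ on $(0,\infty)$. The key observation is that the attractiveness condition can be upgraded to a strict pointwise inequality $z\cdot \nabla W(z) = |z|\,w'(|z|) \geq 0$, with equality if and only if $z = 0$.

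First, I would verify the statement $U_N(0)=0$ together with positivity of $U_N$ on $\Gamma_N^\perp\setminus\{0\}$. Since $w(0)=0$ and $w(r) = w(0) + \int_0^r w'(s)\,\mathrm{d}s > 0$ for all $r>0$, we have $W(z)>0$ for all $z\neq0$. Thus $U_N(y) = \tfrac{1}{2N}\sum_{i,j} W(y_i-y_j) \geq 0$, with equality if and only if $y_i=y_j$ for all $i,j$. Combined with the constraint $\sum_i y_i=0$ defining $\Gamma_N^\perp$, this forces $y=0$, so $0$ is the unique minimiser.

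Next, to show $0$ is the unique critical point, I would use the symmetrisation trick. A direct computation using evenness of $W$ gives $\nabla_i H_N(x) = \tfrac{1}{N}\sum_j \nabla W(x_i - x_j)$, and antisymmetry of $\nabla W$ yields $\sum_i \nabla_i H_N(x) = 0$, so that $\nabla H_N(x)\in\Gamma_N^\perp$ automatically and consequently $\nabla U_N(y) = \nabla H_N(y)$ for $y\in\Gamma_N^\perp$. Pairing with $y$ and symmetrising the double sum:
\begin{equation}
y\cdot\nabla U_N(y) \;=\; \frac{1}{2N}\sum_{i,j} (y_i-y_j)\cdot \nabla W(y_i-y_j) \;=\; \frac{1}{2N}\sum_{i,j}|y_i-y_j|\,w'(|y_i-y_j|).
\end{equation}
By \cref{ass:W} every summand is non-negative and the sum vanishes only when all $y_i$ are equal; on $\Gamma_N^\perp$ this happens only at $y=0$. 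Hence $\nabla U_N(y)\neq 0$ for $y\in\Gamma_N^\perp\setminus\{0\}$.

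For the path-connectedness of $\bar A$, I would show that $\bar A$ is star-shaped about the origin. If $a\leq 0$ then $\bar A=\emptyset$ (by positivity of $U_N$) and the claim is vacuous, so assume $a>0$, in which case $0\in\bar A$. For any $y\in\bar A$, the ray $t\mapsto ty$ with $t\in[0,1]$ stays in $\bar B_{\sqrt N\delta}$ since the ball is convex, and the same symmetrisation as above gives
\begin{equation}
\frac{\mathrm{d}}{\mathrm{d}t} U_N(ty) \;=\; \frac{1}{2N}\sum_{i,j}|y_i-y_j|\,w'(t|y_i-y_j|)\;\geq\;0
\end{equation}
for all $t\in[0,1]$. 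Thus $U_N(ty)\leq U_N(y)<a$, i.e.\ $ty\in\bar A$ for all $t\in[0,1]$. Hence every $y\in\bar A$ is connected to $0$ by a continuous path lying in $\bar A$, which proves path-connectedness. The only real subtlety is recognising that \cref{ass:W} gives a strict gradient monotonicity along rays through the origin, which is what converts the non-convex energy landscape into one with star-shaped sublevel sets.
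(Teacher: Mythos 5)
Your proposal is correct and follows essentially the same route as the paper: establish that $0$ is the unique minimum and critical point of $U_N$ via positivity of $w$ and $w'$ on $(0,\infty)$ together with the symmetrisation $y\cdot\nabla U_N(y)=\tfrac{1}{2N}\sum_{i,j}|y_i-y_j|\,w'(|y_i-y_j|)$, then conclude the sublevel sets are star-shaped about the origin. The paper phrases the last step by integrating $\langle\nabla U_N(\alpha y_0),y_0\rangle$ over $\alpha$ rather than differentiating $U_N(ty)$ directly, but this is the same computation.
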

\begin{proof}
We start by noting that $H_N(x)=0$ if $x \in \Gamma_N$ , the orthogonal complement of $\Gamma_N^\perp$. Furthermore, if $x \notin \Gamma_N$, we can write $x=z+y$ for some $z \in \Gamma_N$ and some $0\neq y \in \Gamma_N^\perp$. This implies that there exists some $(i_0,j_0)\in \{1,\dots,N\}^2$ such that $y_{i_0}\neq y_{j_0}$. Using \cref{ass:W}, we then have 
\begin{align}
H_N(x)=& \,H_N(y)\\
\geq& \, \frac{1}{N}w(|y_{i_0}-y_{j_0}|)>0 \, .
\end{align}
We recall that $U_N=\left.H_N\right|_{\Gamma_N^\perp}$. Hence, we have $U_N(0)=0$ and, for any $0\neq y\in \Gamma_N^\perp$, 
\begin{equation}
U_N(y)=H_N(y)>0\,.
\end{equation}
This establishes the uniqueness of the minimum at $0$. 

To argue that $0$ is also the unique critical point, 
we observe from the properties of $W$ in \cref{ass:W} that
\begin{align}
\langle \nabla H_N(x),x\rangle = & \,\frac{1}{N}\sum_{i,j=1}^N w'(|x_i-x_j|)\frac{x_i-x_j}{|x_i-x_j|}\cdot x_i  \, ,
\end{align}
from which it follows by flipping the roles of $i$ and $j$ and summing that 
\begin{align}
\langle \nabla H_N(x),x\rangle = & \,\frac{1}{2N}\sum_{i,j=1}^N w'(|x_i-x_j|)|x_i-x_j|>0  \, ,
\end{align}
if $x \notin \Gamma_N$. It follows from the definition of $U_N$ that
\begin{equation}
\langle \nabla U_N(y), y \rangle>0 \label{eq:1criticalpoint} \, ,
\end{equation}
for $y \neq 0$. This establishes the uniqueness of $0$ as a critical point. 

For path-connectedness, we argue that, for any $(y_0,y_1)\in \bar{A}$, the curve
\begin{align}
y(t)=
\begin{cases}
(1- 2t )y_0, & \text{ if } t \in [0,\frac12]\\
(2t-1) y_1, & \text{ if } t \in (\frac12,1]
\end{cases}
\, ,
\end{align}
lies in $\bar{A}$. This is clearly true for $\bar{B}_{\sqrt{N}\delta}$. For  $y_0\in\{y \in \Gamma_N^\perp:U_N(y)< a\}$, we note that
\begin{equation}
U_N(y_0)=U_N(\alpha y_0) + \int_\alpha^1 \langle \nabla U_N(\alpha'y_0), y_0\rangle \, \mathrm{d}\alpha'\geq U_N(\alpha y_0) \, ,
\end{equation}
as long has $\alpha\leq 1$. From this it follows that $\alpha y_0 \in\{y \in \Gamma_N^\perp:U_N(y)< a\}$ for any $\alpha \in [0,1]$. The same holds for $y_1$ such that path-connectedness follows.
\end{proof}
As an immediate corollary of the above lemma, we have the following result.
\begin{cor}\label{cor:dig2}
Let $W$ satisfy \cref{ass:W}. Then, for all $i\geq 2$, $d_i=0$.
\end{cor}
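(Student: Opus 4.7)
The plan is to derive the corollary essentially by direct inspection: the only $r$-valley is the one around the unique minimum of $U_N$ at $0$, so there is never more than one valley, which forces $d_i = 0$ for every $i \geq 2$.

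More concretely, I would first record the observation that, by \cref{lemma:mincrti}, every sublevel set of the form
\begin{equation}
A_a := \{y \in \Gamma_N^\perp : U_N(y) < a\} \cap \bar{B}_{\sqrt{N}\delta}
\end{equation}
is path-connected (and hence connected) for every $a \in \R$. In particular, $A_a$ has at most one connected component: it is empty for $a \leq 0 = U_N(0)$ and a single connected component for $a > 0$. Consequently, any $r$-valley $A$ in the sense of the definition preceding \cref{thm:mathieucopy} must coincide with some nonempty $A_a$, and therefore for every $r > 0$ there is at most one $r$-valley, i.e.\ $f(r) \leq 1$.

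I would then conclude by plugging this into the definition of $d_i$. For any $i \geq 2$ and any $r > 0$, we have $f(r) \leq 1 < 2 \leq i$, so the set $\{r > 0 : f(r) < i\}$ is all of $(0,\infty)$, giving
\begin{equation}
d_i = \inf\{r > 0 : f(r) < i\} = \inf (0,\infty) = 0 \, .
\end{equation}

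There is no real obstacle here, as the corollary is a direct consequence of the connectedness of sublevel sets already established in \cref{lemma:mincrti}; the only point to check carefully is that the definition of an $r$-valley allows one to identify it with a connected component of some $A_a$, so that connectedness of $A_a$ translates into an upper bound on $f(r)$. Combined with \cref{thm:mathieucopy}, this immediately yields the second conclusion $\lim_{\beta\to\infty}\beta^{-1}\ln \lambda_{i,\beta,N} = 0$ for $i \geq 2$ announced in \cref{thm:mathieu}.
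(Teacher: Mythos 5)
Your proposal is correct and follows essentially the same route as the paper: both arguments invoke \cref{lemma:mincrti} to conclude that every sublevel set $A_a$ is connected, hence $f(r)\leq 1$ for all $r>0$, which forces $d_i=0$ for $i\geq 2$. Your version simply spells out the intermediate steps (identifying $r$-valleys with connected components of sublevel sets and evaluating $\inf(0,\infty)$) that the paper leaves implicit.
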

\begin{proof}
The proof is a straightforward consequence of the fact that the set $\bar{A}$ in the statement of \cref{lemma:mincrti} is  connected, since it implies that $f(r)<2$ for all $r > 0$, and so $d_i=0$ for all $i\geq 2$.
\end{proof}
It remains to establish a lower bound on $d_1$. To do this, we note, from the discussion in \cite[pg. 4]{M95},  the following alternative characterisation of $d_1$:
\begin{equation}
d_1 = \sup_{y_0 \in \bar{B}_{\sqrt{N}\delta}} \inf_{y_1 \in \partial\bar{B}_{\sqrt{N}\delta}}\inf_{\substack{\phi \in C([0,1];\bar{B}_{\sqrt{N}\delta})\\\phi(0)=y_0,\phi(1)=y_1}} \sup_{t\in [0,1]}U_N(\phi(t))-U_N(\phi(0))\, .
\end{equation}
Using the fact that $U_N(0)=0$, we clearly have the lower bound
\begin{equation}
d_1 \geq \inf_{y_1 \in \partial\bar{B}_{\sqrt{N}\delta}} U_N(y_1) \, .
\label{eq:d1lowerbound}
\end{equation}
We obtain the following result.
\begin{lemma}
\label{lemma:depth}
 Let $W$ satisfy \cref{ass:W} and fix $\delta>0$.  Then  
\begin{equation}
\begin{aligned}
\inf_{y\in \partial B_{\sqrt{N}\delta}}U_N(y)\,\geq\,\,w(\delta) + \frac{\lambda}{2}\delta^2 \, , 
\end{aligned}
\label{eq:lowerbound}
\end{equation}
where $\lambda$ is as in~\eqref{eq:lambdaconvexity}. As a consequence, we have the  lower bound
\begin{equation}
d_1 \geq w(\delta) + \frac{\lambda}{2}\delta^2 \, .
\end{equation}
\end{lemma}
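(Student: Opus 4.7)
The strategy is to isolate the ``most distant'' particle $y_{i_0}$ (chosen so that $r := |y_{i_0}| \geq \delta$, which exists by $\sum_i |y_i|^2 = N\delta^2$) and exploit the $\lambda$-convexity of $W$---specifically the multi-point inequality \eqref{eq:lconvexN}---together with the orthogonality $\sum_i y_i = 0$, reducing the lemma to a one-dimensional inequality for the radial profile~$w$. The second conclusion then follows immediately from \eqref{eq:d1lowerbound}.

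Concretely, I would first decompose
\[
U_N(y) = \frac{1}{N}\sum_{j\neq i_0} W(y_{i_0} - y_j) + \frac{1}{2N}\sum_{i, j\neq i_0} W(y_i - y_j),
\]
discard the second (nonnegative) sum using $W = w(|\cdot|) \geq 0$, and apply \eqref{eq:lconvexN} to the first, with uniform weights $1/(N-1)$. The key algebraic identities are $\frac{1}{N-1}\sum_{j\neq i_0}(y_{i_0} - y_j) = y_{i_0} N/(N-1)$ (from $\sum_{j\neq i_0} y_j = -y_{i_0}$) and $\sum_{j\neq i_0}|y_{i_0} - y_j|^2 = N(r^2 + \delta^2)$, from which a direct computation gives
\[
U_N(y) \geq \frac{N-1}{N}\, w\!\left(\frac{rN}{N-1}\right) + \frac{\lambda\,[(N-1)\delta^2 - r^2]}{2(N-1)}.
\]

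To compare with the target $w(\delta) + \frac{\lambda}{2}\delta^2$, I would apply the one-dimensional $\lambda$-convex interpolation of $w$ between $0$ and $s := rN/(N-1)$ at the intermediate point~$\delta$, namely $w(\delta) \leq (\delta/s)\, w(s) - \frac{\lambda\delta(s-\delta)}{2}$. Substituting and simplifying, the $N$-dependence cancels neatly and the right-hand side reduces to $(r/\delta)\,w(\delta) + \frac{\lambda}{2}(r^2 - r\delta + \delta^2)$; its difference with the target factors cleanly as $(r-\delta)\left[w(\delta)/\delta + \lambda r/2\right]$, non-negative whenever $r \leq 2w(\delta)/(|\lambda|\delta)$.

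The main obstacle is the regime of $r$ close to the geometric maximum $\sqrt{N-1}\,\delta$ (the extremal ``one-big, rest-clustered'' configuration), where the above factorisation can fail. For this regime the plan is to use instead that $s = rN/(N-1) \geq 2\delta$, so that the midpoint form of $\lambda$-convexity gives $w(s) \geq w(2\delta) \geq 2w(\delta) + \lambda\delta^2$; combined with \eqref{eq:deltaCondition} (the only non-trivial case, since otherwise $U_N \geq 0$ already implies the claim), a short calculation yields $\frac{N-1}{N}\,w(s) \geq w(\delta) + \frac{\lambda}{2}\delta^2$ directly. Splicing the interpolation estimate for moderate~$r$ with the midpoint estimate for~$r$ near $\sqrt{N-1}\,\delta$ closes the proof.
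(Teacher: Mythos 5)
Your decomposition, the identity $\sum_{j\neq i_0}(y_{i_0}-y_j)=Ny_{i_0}$, the second-moment computation, and the interpolation step are all correct. However, the argument has a genuine gap in the ``Plan B'' regime. First, from your bound $U_N(y)\geq\frac{N-1}{N}w(s)+\frac{\lambda[(N-1)\delta^2-r^2]}{2(N-1)}$, the second term is strictly negative whenever $r<\sqrt{N-1}\,\delta$ (since $\lambda<0$), so establishing only $\frac{N-1}{N}w(s)\geq w(\delta)+\frac{\lambda}{2}\delta^2$ does \emph{not} yield the claim; that negative term can be as large in magnitude as roughly $\frac{|\lambda|}{2}\delta^2$ for $r$ near the threshold $r^*$ and large $N$, and it cannot be discarded. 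Second, the threshold where the interpolation argument stops working is $r^*=2w(\delta)/(|\lambda|\delta)$, while Plan B requires $s\geq2\delta$, i.e.\ $r\geq 2(N-1)\delta/N$. Condition \eqref{eq:deltaCondition} only gives $r^*>\delta$, so for large $N$ and $w(\delta)\in(\tfrac{|\lambda|}{2}\delta^2, |\lambda|\delta^2)$ the interval $(r^*,\,2(N-1)\delta/N)$ is non-empty and is covered by neither plan. Both issues mean the splice does not close.

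The root cause is that you applied the $\lambda$-convex Jensen inequality to the $N-1$ points $\{y_{i_0}-y_j\}_{j\neq i_0}$. The paper instead applies it to all $N$ points $\{x_{i_0}-x_j\}_{j=1}^N$, which costs nothing since $W(0)=0$, but is decisive: the barycenter is then exactly $\frac{1}{N}\sum_j(x_{i_0}-x_j)=x_{i_0}$ (using $\sum_j x_j=0$), and the variance is exactly $\frac{1}{N}\sum_j|x_{i_0}-x_j-x_{i_0}|^2=\frac{1}{N}\sum_j|x_j|^2=\delta^2$. The sharp (variance) form of the $\lambda$-convex Jensen inequality then gives in one line
\begin{equation}
U_N(x)\,\geq\,\frac{1}{N}\sum_{j=1}^N W(x_{i_0}-x_j)\,\geq\, w(|x_{i_0}|)+\frac{\lambda}{2}\delta^2\,\geq\,w(\delta)+\frac{\lambda}{2}\delta^2\,,
\end{equation}
with no case analysis, no interpolation, and no $N$-dependence to track. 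Including the ``trivial'' index $j=i_0$ is exactly what makes the barycenter land at $x_{i_0}$; dropping it shifts the barycenter to $\frac{N}{N-1}x_{i_0}$ and creates the parasitic term you are forced to chase.
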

\begin{proof}
 For the proof of \eqref{eq:lowerbound}, we note that $x \in \partial \bar{B}_{\sqrt{N}\delta}$ implies the existence of an $i_0\in\{1, \dots,N\}$ such that $\abs*{x_{i_0}}\ge\delta$. Thus, using \cref{ass:W} and the expression~\eqref{eq:lconvexN}, we have 

\begin{align}
U_N(x)= & \,H_N(x)\\\ge & \,\frac{1}{N}\sum_{j=1}^NW(x_{i_0}-x_j) \\
\geq &\,\,W\left(\frac{1}{N}\sum_{j=1}^Nx_{i_0}-\frac{1}{N}\sum_{j=1}^Nx_j\right)+\frac{\lambda(N-1)}{2N^2}\sum_{j=1}^N|x_{i_0} -x_j|^2 \\
\geq & \, w(\delta) +\frac{\lambda(N-1)}{2N^2}(N+1)\delta^2 \\
\geq & \, w(\delta) +\frac{\lambda}{2}\delta^2 \, ,
\end{align}
where we have used the fact that $\lambda<0$. We can combine the above bound with \eqref{eq:d1lowerbound} to conclude the proof of the lemma.
\end{proof}
Combining the results of \cref{thm:mathieucopy,cor:dig2,lemma:depth}, we have a complete proof of \cref{thm:mathieu}.

\section{Proof of \texorpdfstring{\cref{thm:simon}}{refined asymptotics}} 
\label{sec:proof_of_thm:simon}
In the following, for a Banach space $B$, if $x\in B$ then $\norm*{x}_{B}$ denotes the the norm of $x$, while if $A:B \to B$ is an operator, then $\norm*{A}_{B}=\norm*{A}_{B \to B}$ denotes the operator norm of $A$.  To study the asymptotic behaviour of the higher eigenvalues of $-L_D$, we adopt the approach of \cite{Simon1983}. To this end, we introduce a Schr{\"o}dinger operator $S_\beta$, which is unitarily equivalent to the sub-Markov generator $-L_D$, 
given by 
\[
S_\beta=-\beta^{-1}\Delta + \frac{\beta}{4}|\nabla U_N|^2 - \frac{1}{2}\Delta U_N\,=:-\beta^{-1}\Delta + V_\beta, 
\]
with Dirichlet boundary conditions on $B_{\sqrt{N}\delta}$. 
A Schr{\"o}dinger operator of this form is sometimes referred to as a \emph{Witten Laplacian}~\cite{HN05,HN06,W82}. 
Recall that $-L_D$ is defined on $L^2(B_{\sqrt{N}\delta};p_N)$, where $p_N$ is as defined in \eqref{eq:muN} and \eqref{eq:pN}, and let the density of $p_N$ be denoted by $u_N$.  One can check that  
\[
T:L^2(B_{\sqrt{N}\delta}; p_N)\,\rightarrow L^2(B_{\sqrt{N}\delta}),\qquad Tf\,\coloneqq\, u^{1/2}f \,,
\]
is a unitary conjugacy between $-L_D$ as an operator on $L^2(B_{\sqrt{N}\delta};p_N)$ and $S_\beta$ as an operator on $L^2(B_{\sqrt{N}\delta})$: 
\begin{equation}
\label{eq:conjugacy}
-L_D f\,=\,T^{-1}\left( S_\beta (Tf) \right) = u_N^{-1/2} S_\beta \left( u_N^{1/2}f  \right), \qquad f \in L^2(B_{\sqrt{N}\delta};p_N)\,. 
\end{equation}
It thus follows that $S_\beta$ has domain $u_N^{\frac12}D(-L_D)\subseteq L^2(B_{\sqrt{N}\delta})$, is self-adjoint, and $S_\beta,-L_D$ have the same spectrum. 
The analysis of Schr\"odinger operators in~\cite{Simon1983} is presented on the whole space. Thus, to apply the analysis in \cite{Simon1983}, we  need to establish a connection between the spectra of operators on $\Gamma_N^\perp$ and operators on $B_{\sqrt{N}\delta}$ with Dirichlet boundary conditions.    Moreover, we recall that we may identify $\Gamma_N^\perp$ with $\R^{(N-1)d}$, and so consider all Schr{\"o}dinger operators to be defined on (a subset of) Euclidean space.

\subsection{Approximating Schr{\"o}dinger operators on the whole space}
As mentioned earlier, to study the spectrum of $S_\beta$, we apply the results of \cite{Simon1983}, which are only stated for Schr{\"o}dinger operators defined on $\R^k$, $k\in\N$.  As \cite{Simon1983}~employs scaling arguments to study the asymptotics of the spectra of Schr{\"o}dinger operators, the results therein do not trivially transfer to Schr{\"o}dinger operators defined on bounded domains. 
To impose boundary conditions in a way that is compatible with \cite{Simon1983}, we approximate $S_\beta$ by a sequence of operators over $\R^{(N-1)d}$; the spectra of these operators will be shown to converge to the spectrum of $S_\beta$.

The convergence of sequences of differential operators on varying spatial domains has been widely studied, for instance in \cite{AD08,BD06,D08}.  However, issues often arise in the case where the spatial domains tend to an unbounded set.  To circumvent these issues, we use ideas found in \cite{DPL10}, and excellently explained in the thesis \cite{LBG}, where Schr{\"o}dinger operators on bounded domains are approximated by Schr{\"o}dinger operators on the whole space in a strong resolvent sense. 
As we want to approximate the spectrum of $S_\beta$ by the spectra of operators $\tilde{S}_\beta$, and it is known that strong resolvent convergence alone may allow for spectral pollution as $n\rightarrow\infty$ \cite[Chapter IV.3.1]{K13}, this result as such is insufficient. 
Hence, in the following, we strengthen the results of \cite{DPL10}~to norm resolvent convergence. 

In this section, for $f:B_{\sqrt{N}\delta}\rightarrow\R$ we denote by $\overline{f}$ the extension of $f$ to $\R^{(N-1)d}$ by setting $\overline{f}=0$ on $\bar{B}_{\sqrt{N}\delta}^c $.  Similarly, for $f:\R^{(N-1)d}\rightarrow\R$, we denote by $f\rvert$ the restriction of $f$ to $B_{\sqrt{N}\delta}$. Let $(B_t)_{t\ge0}$ denote a standard Brownian motion on $\R^{(N-1)d}$. We denote by $\bP_x$ the Wiener measure corresponding to $(B_t)_{t\ge0}$ with $B_0=x$, and by $\E_y$ the corresponding expectation. By the Feynman--Kac formula, the semigroup generated by $S_\beta$ with Dirichlet boundaries on $B_{\sqrt{N}\delta}$ can be expressed as 
\[
(e^{-S_\beta t}f)(y)\,\coloneqq\,\E_y\left[f(B_{\beta^{-1}t})e^{-\beta\int_0^{\beta^{-1}t}V_\beta(B_s)\,\mathrm{d}s}1_{\{\beta^{-1}t<\tau\}}\right]\,,
\]
where $\tau$ is the first exit time of $(B_t)_{t\ge0}$ from $B_{\sqrt{N}\delta}$ \cite[Lemma 2.2]{L98}~(note that the $\tau$ introduced here is distinct from $\tau_\delta$, defined in \eqref{eq:taudelta}).  
Let $V_0:\R^{(N-1)d}\rightarrow[0,1]$ be a smooth function such that 
\begin{enumerate}[1.]
\item $V_0$ is non-decreasing in $|y|$, 
\item $V_0(y)=0$ for $y\in B_{\sqrt{N}\delta}$, $V_0(y)>0$ for $y\in \bar{B}_{\sqrt{N}\delta}^c$, and 
\item $\lim_{|y|\to \infty}V_0(y)=+\infty$. 
\end{enumerate}
We then define $\tilde{S}_\beta$ on $C^\infty_c(\R^{(N-1)d})$ as 
\begin{equation}
\tilde{S}_\beta\,\coloneqq\,-\beta^{-1}\Delta + V_\beta + \beta V_0\,. 
\end{equation} 
One can readily check that $\tilde{S}_\beta$ is essentially self-adjoint on $L^2(\R^{(N-1)d})$ (see~\cite[Theorem X.29]{RS75}) and we continue to denote by $\tilde{S}_\beta$ its unique self-adjoint extension. One can also check that $\tilde{S}_\beta$ is a non-negative operator. As before, $\tilde{S}_\beta$ is the generator in the $L^2(\R^{(N-1)d})$-topology of the semigroup 
\begin{equation}
\label{eq:Pn}
e^{-\tilde{S}_\beta t}f(y)\,\coloneqq\,\E_y\left[f(B_{\beta^{-1}t})e^{-\beta\int_0^{{\beta^{-1}t}}V_\beta(B_s)\,\mathrm{d}s}e^{-\beta^2 \int_0^{\beta^{-1}t}V_0(B_s)\,\mathrm{d}s}\right]\,. 
\end{equation}

As mentioned above, the goal here is to prove that $\tilde{S}_\beta$ converges to $S_\beta$ in norm resolvent as $\beta\to \infty$.  To achieve this, we first prove the norm convergence of the semigroups $e^{-\tilde{S}_\beta t}$ to $e^{-S_\beta t}$ as $\beta \to \infty$ for fixed $t>0$ in \cref{lemma:SemigroupConvergence}.  Then, in \cref{lemma:Resolvent1}, we use the Laplace transform representation of the resolvent to prove norm convergence of the resolvents $R(\lambda,-\tilde{S}_\beta)$ to $R(\lambda,-S_\beta)$ for all $\lambda>0$ where the resolvent operators are given by 
\[
R(\lambda,-S_\beta)\,=\,(\lambda+S_\beta)^{-1}\,,\qquad \lambda\in\C\,\backslash\spec(-S_\beta)\,, 
\]
and similarly for $S_\beta$ replaced with $\tilde{S}_\beta$.  
Finally, we extend this norm resolvent convergence to every admissible $\lambda\in\C$ in \cref{prop:ResolventConvergence}~using the Stone--Weierstrass theorem. 
Note that in what follows, we will abuse notation by using $L^2(\R^{(N-1)d})$ to denote the space of complex-valued square-integrable functions over $\R^{(N-1)d}$.

\begin{lemma}
\label{lemma:SemigroupConvergence}
For $f\in L^2(\R^{(N-1)d})$ and $t>0$, it holds that 
\begin{equation}
\label{eq:}
\lim_{\beta \to \infty}\sup_{\norm*{f}_{L^2(\R^{(N-1)d})} \leq 1}\norm*{\overline{e^{-S_\beta t}(f\rvert)}-e^{-\tilde{S}_\beta t}f}_{L^2(\R^{(N-1)d})}\,=\,0 \, .
\end{equation}
\end{lemma}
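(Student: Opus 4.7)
My plan is to compare the two Feynman--Kac integrands pathwise in order to obtain a pointwise bound on $\Delta_\beta f\coloneqq e^{-\tilde{S}_\beta t}f - \overline{e^{-S_\beta t}(f\rvert)}$ via Cauchy--Schwarz, and then partition $\R^{(N-1)d}$ into regions on which the ``bad events'' driving $\Delta_\beta f$ carry exponentially small mass in $\beta$. Writing $T=\beta^{-1}t$, for $y\in B_{\sqrt{N}\delta}$ the event $\{\tau>T\}$ forces the Brownian path to stay inside $B_{\sqrt{N}\delta}$, so $V_0\equiv 0$ along it and $f\rvert(B_T)=f(B_T)$; consequently the two Feynman--Kac expectations agree on this event, leaving
\[
\Delta_\beta f(y) \,=\, \E_y\!\left[f(B_T)\,e^{-\beta\int_0^{T}V_\beta(B_s)\,\mathrm{d}s}\,e^{-\beta^2\int_0^{T}V_0(B_s)\,\mathrm{d}s}\,1_{\{\tau\le T\}}\right],
\]
while for $y\notin\overline{B_{\sqrt{N}\delta}}$ the second term vanishes, so $\Delta_\beta f(y)=(e^{-\tilde S_\beta t}f)(y)$. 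In both cases only paths that visit $\overline{B_{\sqrt{N}\delta}}^c$ contribute to the difference.

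The uniform bound $|\Delta U_N|\le C$, a consequence of the global boundedness of all derivatives of $W$, gives $V_\beta\ge -C$ uniformly in $\beta$, and hence $e^{-\beta\int_0^{T}V_\beta\,\mathrm{d}s}\le e^{Ct}$ along every path. Applying Cauchy--Schwarz with respect to $\bP_y$ I obtain
\[
|\Delta_\beta f(y)|^2 \,\le\, e^{2Ct}\,(p_T*|f|^2)(y)\,g_\beta(y), \qquad y\in\R^{(N-1)d},
\]
where $p_T$ denotes the Euclidean heat kernel and $g_\beta(y)$ is the second-moment factor coming from the path weights and the relevant indicator. Trivially $g_\beta(y)\le e^{2Ct}$, and my goal is to show that on suitable regions $g_\beta$ is actually exponentially small in $\beta$.

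To this end, I partition $\R^{(N-1)d}=\mathcal{R}_1\cup\mathcal{R}_2\cup\mathcal{R}_3$ with $\mathcal{R}_1=\overline{B_\rho}$ for some $0<\rho<\sqrt{N}\delta$, $\mathcal{R}_2=\{\rho<|y|\le R\}$ for $R$ large, and $\mathcal{R}_3=\{|y|>R\}$, and establish $\sup_{\mathcal{R}_i}g_\beta\le Ce^{-c\beta}$ on each: on $\mathcal{R}_1$ the reflection-principle bound $\bP_y(\tau\le T)\le Ce^{-\beta(\sqrt{N}\delta-\rho)^2/(2t)}$ does the job; on $\mathcal{R}_2$, since \cref{lemma:mincrti} identifies $0$ as the unique critical point of $U_N$, compactness yields $|\nabla U_N|\ge\eta>0$ on $\overline{\mathcal{R}_2}$, and restricting to the localisation event $A_y=\{\sup_{s\le T}|B_s-y|\le r\}$ (whose complement has probability at most $Ce^{-\beta r^2/(2t)}$ by reflection) allows the leading $(\beta/4)|\nabla U_N|^2$ term in $V_\beta$ to drive $e^{-\beta\int V_\beta}\le e^{-c\beta t}$ on $A_y$ for $r$ small; on $\mathcal{R}_3$, choosing $R$ so that $V_0\ge v_0>0$ on $\mathcal{R}_3$ (possible by the monotonicity and blow-up assumptions on $V_0$), a Chebyshev-type estimate for $\int_0^T V_0(B_s)\,\mathrm{d}s$ combined with the same localisation event gives $\E_y[e^{-2\beta^2\int V_0}]\le Ce^{-c\beta t}$. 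Integrating the pointwise bound over each $\mathcal{R}_i$ and using $\int(p_T*|f|^2)(y)\,\mathrm{d}y=\|f\|_{L^2}^2$ by Fubini, I conclude $\|\Delta_\beta f\|_{L^2}^2\le Ce^{-c\beta}\|f\|_{L^2}^2$, which yields the claimed uniform convergence as $\beta\to\infty$. The most delicate step is region $\mathcal{R}_2$: the localisation radius $r$ must be chosen small enough that $|\nabla U_N|$ remains bounded below by $\eta/2$ along the localised paths, yet large enough that $\bP_y(A_y^c)$ stays exponentially small uniformly in $y\in\overline{\mathcal{R}_2}$; both are feasible using the smoothness of $U_N$ together with the compactness of $\overline{\mathcal{R}_2}$.
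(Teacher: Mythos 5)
Your proposal is correct and follows essentially the same route as the paper's proof: a Feynman--Kac comparison of the two semigroups, Cauchy--Schwarz to separate the $|f(B_T)|^2$ factor (integrating out to $\|f\|_{L^2}^2$), and a three-region partition of $\R^{(N-1)d}$ (interior of the ball, an annulus around $\partial B_{\sqrt{N}\delta}$, and the far field) on which the exit indicator, the $|\nabla U_N|^2$ term, and the confining potential $V_0$, respectively, supply the smallness. The only real difference is cosmetic: where the paper argues qualitatively (strong-Markov comparison to a boundary point plus $\tau>0$ a.s., dominated convergence), you instead invoke explicit reflection-principle/localisation bounds that yield an exponential rate $e^{-c\beta}$ in $\beta$ --- slightly stronger than what the lemma asks for, but not needed for the rest of the argument.
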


\begin{proof}
For an arbitrary $f\in L^2(\R^{(N-1)d})$ and $\gamma \in (0,1)$, we have
\begin{align}
&\norm*{\overline{e^{-S_\beta t}(f\rvert)}-e^{-\tilde{S}_\beta t}f}_{L^2(\R^{(N-1)d})}^2\\
=& \,\int_{\mathbb{R}^{(N-1)d}}\left(\E_y\left[f(B_{\beta^{-1}t})e^{-\beta\int_0^{\beta^{-1}t}V_\beta(B_s)\,\mathrm{d}s}e^{-\beta^2\int_0^{\beta^{-1}t}V_0(B_s)\,\mathrm{d}s} 1_{\{\beta^{-1}t\ge\tau\}}\right]\right)^2\, \mathrm{d}y\\
\le & \,\int_{\R^{(N-1)d}}\E_y\left[|f(B_{\beta^{-1}t})|^2\right]\E_y\left[e^{-2\beta\int_0^{\beta^{-1}t}V_\beta(B_s)\,\mathrm{d}s}e^{-2\beta^2\int_0^{\beta^{-1}t}V_0(B_s)\,\mathrm{d}s} 1_{\{\beta^{-1}t\ge\tau\}}\right]\, \mathrm{d}y  \\
\le & \,\int_{B_{\gamma \sqrt{N}\delta}}\E_y\left[|f(B_{\beta^{-1}t})|^2\right]\E_y\left[e^{-2\beta\int_0^{\beta^{-1}t}V_\beta(B_s)\,\mathrm{d}s}e^{-2\beta^2\int_0^{\beta^{-1}t}V_0(B_s)\,\mathrm{d}s} 1_{\{\beta^{-1}t\ge\tau\}}\right]\, \mathrm{d}y\\
&\, +\int_{B_{\gamma^{-1} \sqrt{N}\delta}\setminus B_{\gamma \sqrt{N}\delta}}\E_y\left[|f(B_{\beta^{-1}t})|^2\right]\E_y\left[e^{-2\beta\int_0^{\beta^{-1}t}V_\beta(B_s)\,\mathrm{d}s}e^{-2\beta^2\int_0^{\beta^{-1}t}V_0(B_s)\,\mathrm{d}s} 1_{\{\beta^{-1}t\ge\tau\}}\right]\, \mathrm{d}y \\
&\, +\int_{B_{\gamma^{-1} \sqrt{N}\delta}^c}\E_y\left[|f(B_{\beta^{-1}t})|^2\right]\E_y\left[e^{-2\beta\int_0^{\beta^{-1}t}V_\beta(B_s)\,\mathrm{d}s}e^{-2\beta^2\int_0^{\beta^{-1}t}V_0(B_s)\,\mathrm{d}s} 1_{\{\beta^{-1}t\ge\tau\}}\right]\, \mathrm{d}y\\
\leq & \, e^{C_{\Delta}t}\left(\int_{\R^{(N-1)d}}\E_y\left[|f(B_{\beta^{-1}t})|^2\right]\, \mathrm{d}x\right)\bigg(\esup\limits_{y\in B_{\gamma\sqrt{N}\delta}}\mathbb{P}_y\left[\tau\leq \beta^{-1}t\right]  \\&\, +\esup_{y \in B_{\gamma^{-1} \sqrt{N}\delta}\setminus B_{\gamma \sqrt{N}\delta}}\E_y \left[e^{-\frac{\beta^2}{4} \int_0^{\beta^{-1}t}|\nabla U_N|^2(B_s) \, \mathrm{d}s} \right] +\esup_{y\in B_{\gamma^{-1} \sqrt{N}\delta}^c}\E_y \left[e^{-\beta^2 \int_0^{\beta^{-1}t}V_0(B_s) \, \mathrm{d}s} \right] \bigg)\\
\leq & \, e^{C_\Delta t}\lVert f\rVert_{L^2(\R^{(N-1)d})}^2\bigg(\esup\limits_{x\in B_{\gamma\sqrt{N}\delta}}\mathbb{P}_y\left[\tau\leq \beta^{-1}t\right]  \\&\, +\esup_{y \in B_{\gamma^{-1} \sqrt{N}\delta}\setminus B_{\gamma \sqrt{N}\delta}}\E_y \left[e^{-\frac{\beta^2}{4} \int_0^{\beta^{-1}t}|\nabla U_N|^2(B_s) \, \mathrm{d}s} \right] +\esup_{y \in B_{\gamma^{-1} \sqrt{N}\delta}^c}\E_y \left[e^{-\beta^2 \int_0^{\beta^{-1}t}V_0(B_s) \, \mathrm{d}s} \right] \bigg)\, , \label{eq:termstobeest}
\end{align}

where we have used the fact that the heat semigroup is non-expansive in $L^2(\R^{(N-1)d})$, $|\Delta U_N|\leq C_\Delta<\infty$, and $V_0\geq 0$. We now need to control the three terms that show up in the bracket on the right hand side of~\eqref{eq:termstobeest}. For the first term, we note that 
\begin{equation}
\esup_{x\in B_{\gamma \sqrt{N}\delta}}\mathbb{P}_y\left[\tau\leq \beta^{-1}t\right]\, \leq\, \mathbb{P}_{\bar{y}}\left[\tau\leq \beta^{-1}t\right]
\label{eq:stoppingbound}
\end{equation}
for some arbitrary $\bar{y}\in \partial B_{\gamma\sqrt{N}\delta}$ (due to the rotational invariance of Brownian motion this probability is independent of the choice of $\bar{y}$). 
The term on the right hand side goes to $0$ as $\beta \to \infty$ since $\tau>0$ almost surely for $\bar{y}\in \partial B_{\gamma\sqrt{N}\delta}$. 
To see that~\eqref{eq:stoppingbound} is true, let $\tau'$ denote the first exit time from $B_{\gamma\sqrt{N}\delta}$. 
Then, we argue that, for $y\in B_{\gamma \sqrt{N}\delta}$, 
\begin{align}
\mathbb{P}_{y}[\tau\leq t] =&\,\mathbb{P}_{y}\left[\sup_{0\leqslant s\leqslant t}|B_{s}|\geq\sqrt{N}\delta\right]\\
   =&\,\mathbb{P}_{y}\left[\sup_{\tau'\leq s\leq t}|B_{s}|\geq \sqrt{N}\delta\right]\\
   =&\,\mathbb{P}_{y}\left[\sup_{0\leq s\leq t-\tau'}|B_{s+\tau'}|\geq \sqrt{N}\delta\right]\\
   =&\,\mathbb{P}_{y}\left[\sup_{0\leq s\leq t-\tau'}|B_{s+\tau'}-B_{\tau'}+B_{\tau'}|\geq\sqrt{N}\delta\right]\, . 
\end{align}
Note now that by the strong Markov property $B_{s+\tau'}-B_{\tau'}$ is a Brownian motion itself and is independent of $\mathcal{F}_{\tau'}$. 
As a consequence, we can rewrite the above expression as
\begin{align}
&\int_{\partial B_{\gamma \sqrt{N}\delta}\times[0,\infty)}\mathbb{P}_{0}\left[\sup_{0\leq s\leq t-r}|B_{s}+y|\geq \sqrt{N}\delta \right]\mathrm{d}\mu(y,r)\\
=&\,  \int_{[0,\infty)}\mathbb{P}_{0}\left[\sup_{0\leqslant s\leqslant t-r}|B_{s}+\bar{y}|\geq \sqrt{N}\delta\right]\mathrm{d}\tilde{{\mu}}(r)
\leq  \mathbb{P}_{\bar{y}}[\tau\leq t]\,,
\end{align}
where $\bar{y}\in\partial B_{\gamma \sqrt{N}\delta}$ is arbitrary, $\mu$  is the joint law of $\left(B_{\tau'},\tau'\right)$, and $\tilde{{\mu}}$ is the law of $\tau'$. The last inequality follows from the fact that $\{\tau\leq t-r\}\subseteq\{\tau\leq t\}$ for all $r\ge0$. 

We now control the second term in the bracket on the right hand side of~\eqref{eq:termstobeest}. By~\cref{lemma:mincrti}, we know that 
\begin{equation}
\inf_{y\in B_{2\gamma^{-1} \sqrt{N}\delta}\setminus B_{(\gamma/2) \sqrt{N}\delta}}|\nabla U_N|^2(y)\,\eqqcolon\,C_\nabla\,>\,0\, .
\end{equation}
Let $\tilde{\tau}_1$ denote the first exit time from $B_{2(\gamma)^{-1} \sqrt{N}\delta}$, $\tilde{\tau}_2$ the first hitting time for $B_{(\gamma/2)\sqrt{N}\delta}$, and $\tilde{\tau}\coloneqq\tilde{\tau}_1\wedge\tilde{\tau}_2$. Then, we have that
\begin{align}
&\, \esup_{y \in B_{\gamma^{-1} \sqrt{N}\delta}\setminus B_{\gamma \sqrt{N}\delta}}\E_y \left[e^{-\frac{\beta^2}{4} \int_0^{\beta^{-1}t}|\nabla U_N|^2(B_s) \, \mathrm{d}s} \right]\\
\leq & \, \esup_{y \in B_{\gamma^{-1} \sqrt{N}\delta}\setminus B_{\gamma \sqrt{N}\delta}} \E_y \left[e^{-\frac{\beta C_\nabla t }{8} } \right] \\&\,+ \esup_{y \in B_{\gamma^{-1} \sqrt{N}\delta}\setminus B_{\gamma \sqrt{N}\delta}}\E_y \left[\mathbf{1}_{\tilde{\tau}\leq \beta^{-1}t/2} \right]\\
\leq &\, e^{-\frac{\beta C_\nabla t }{8} } +\esup_{y \in B_{\gamma^{-1} \sqrt{N}\delta}\setminus B_{\gamma \sqrt{N}\delta}}\mathbb{P}_y \left[\tilde{\tau}\leq \frac{\beta^{-1}t}{2} \right]\\
\leq & \,e^{-\frac{\beta C_\nabla t }{8} }+\esup_{y \in B_{\gamma^{-1} \sqrt{N}\delta}\setminus B_{\gamma \sqrt{N}\delta}}\mathbb{P}_y \left[\tilde{\tau}_1\leq \frac{\beta^{-1}t}{2} \right]\\&\,+\esup_{y \in B_{\gamma^{-1} \sqrt{N}\delta}\setminus B_{\gamma \sqrt{N}\delta}}\mathbb{P}_y \left[\tilde{\tau}_2\leq\frac{\beta^{-1}t}{2} \right]\\
\leq &\,e^{-\frac{\beta C_\nabla t }{8} } +\mathbb{P}_{\bar{y}_1} \left[\tilde{\tau}_1\leq \frac{\beta^{-1}t}{2}\right]\\&\,+\mathbb{P}_{\bar{y}_2} \left[\tilde{\tau}_2\leq \frac{\beta^{-1}t}{2} \right]\, ,
\end{align}
where $\bar{y}_1\in \partial B_{\gamma^{-1}\sqrt{N}\delta}$ and $\bar{y}_2 \in \partial B_{\gamma\sqrt{N}\delta}$ are arbitrary, and the last inequality can be obtained using a similar argument to the one used to obtain~\eqref{eq:stoppingbound}. 
The right hand side of the above expression goes to $0$ as $\beta \to \infty$, by the dominated convergence theorem. 

We now treat the third term in the bracket on the right hand side of~\eqref{eq:termstobeest} in a similar manner. Let $\hat{\tau}$ be the first hitting time of $B_{\kappa\sqrt{N}\delta}$ for some $1<\kappa<\gamma^{-1}$ and set $C_{V_0}:=\inf_{y\in B_{\kappa\sqrt{N}\delta}^c}V_0(y)$. We then have the following estimate
\begin{align}
&\,\esup_{y \in B_{\gamma^{-1} \sqrt{N}\delta}^c}\E_y \left[e^{-\beta^2 \int_0^{\beta^{-1}t}V_0(B_s) \, \mathrm{d}s} \right] \\
\leq & \,\esup_{y \in B_{\gamma^{-1} \sqrt{N}\delta}^c}\E_y \left[e^{-\frac{\beta  C_{V_0}t}{2}  } \right] +\esup_{y \in B_{\gamma^{-1} \sqrt{N}\delta}^c} \mathbb{P}_y\left[\hat{\tau}\leq\frac{\beta^{-1}t}{2}\right]\, ,
\end{align}
where the first term converges to $0$ as $\beta \to \infty$. 
The second term also converges to $0$ as $\beta \to \infty$ using similar arguments as for the other two terms on the right hand side of~\eqref{eq:termstobeest}. 
This completes the proof.
\end{proof}

\begin{lemma}
\label{lemma:Resolvent1}
For $f\in L^2(\R^{(N-1)d})$ and $\lambda>0$, it holds that 
\begin{equation}
\lim_{\beta \to \infty }\sup_{\norm*{f}_{L^2(\R^{(N-1)d})}\leq 1}\norm*{\overline{R(\lambda,-S_\beta)(f\rvert)}-R(\lambda,-\tilde{S}_\beta)f}_{L^2(\R^{(N-1)d})}=0 
\end{equation}
\begin{proof}
Fixing $\lambda>0$, by \cite[II.1.10]{EN00},~we write
\[
R(\lambda,\tilde{S}_\beta)f\,=\,\int_0^\infty e^{-\lambda t}e^{-\tilde{S}_\beta t}f\,\mathrm{d}t,\qquad \forall  f\in L^2(\R^{(N-1)d})\,, 
\]
and observe that 
\[
\overline{R(\lambda,-S_\beta)(f\rvert)}\,=\,\int_0^\infty e^{-\lambda t}\overline{e^{-S_\beta t}(f\rvert)}\,\mathrm{d}t, \qquad \forall f\in L^2(\R^{(N-1)d})\,. 
\]
Using~\cref{lemma:SemigroupConvergence}, the fact that the heat semigroup is non-expansive in $L^2$, and the dominated convergence theorem, we have that
\begin{equation}
\begin{aligned}
&\lim_{\beta \to \infty }\sup_{\norm*{f}_{L^2(\R^{(N-1)d})}\leq 1}\norm*{\overline{R(\lambda,-S_\beta)(f\rvert)}-R(\lambda,-\tilde{S}_\beta)f}_{L^2(\R^{(N-1)d})}\\
&\qquad\le\,\lim_{\beta \to \infty }\sup_{\norm*{f}_{L^2(\R^{(N-1)d})}\leq 1} \,\int_0^\infty e^{-\lambda t}\norm*{\overline{e^{-S_\beta t}(f\rvert)} - e^{-\tilde{S}_\beta t}f}_{L^2(\R^{(N-1)d})} \,\mathrm{d}t =0 \,  .
\end{aligned}
\end{equation}
Note, to apply the dominated convergence theorem, we have used the fact that both the semigroups are non-expansive since $\tilde{S}_\beta,S_\beta$ are non-negative.
\end{proof}
\end{lemma}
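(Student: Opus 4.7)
The plan is to leverage the Laplace transform representation of the resolvent of a non-negative self-adjoint operator. Since $S_\beta$ (with Dirichlet boundary conditions on $B_{\sqrt{N}\delta}$) and $\tilde{S}_\beta$ are both non-negative self-adjoint, as established immediately prior in the paper, standard semigroup theory (e.g.~\cite[II.1.10]{EN00}) gives, for every $\lambda>0$,
\[
R(\lambda,-S_\beta)\, g \,=\, \int_0^\infty e^{-\lambda t}\, e^{-tS_\beta} g\,\mathrm{d}t,\qquad R(\lambda,-\tilde{S}_\beta)\, f\,=\,\int_0^\infty e^{-\lambda t}\, e^{-t\tilde{S}_\beta} f\,\mathrm{d}t,
\]
with the integrals understood as Bochner integrals valued in $L^2$. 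Applying the first identity with $g = f\rvert$ and commuting the extension-by-zero $\overline{\,\cdot\,}$ through the scalar integration yields
\[
\overline{R(\lambda,-S_\beta)(f\rvert)} - R(\lambda,-\tilde{S}_\beta)f \,=\, \int_0^\infty e^{-\lambda t}\Bigl(\overline{e^{-tS_\beta}(f\rvert)} - e^{-t\tilde{S}_\beta}f\Bigr)\mathrm{d}t.
\]

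Next I would take $L^2(\R^{(N-1)d})$-norms under the integral by Minkowski to obtain
\[
\norm*{\overline{R(\lambda,-S_\beta)(f\rvert)} - R(\lambda,-\tilde{S}_\beta)f}_{L^2(\R^{(N-1)d})}\,\le\,\int_0^\infty e^{-\lambda t}\,\norm*{\overline{e^{-tS_\beta}(f\rvert)} - e^{-t\tilde{S}_\beta}f}_{L^2(\R^{(N-1)d})}\mathrm{d}t.
\]
By \cref{lemma:SemigroupConvergence}, the integrand converges to $0$ as $\beta\to\infty$ for each fixed $t>0$, uniformly in $\norm*{f}_{L^2(\R^{(N-1)d})}\le1$. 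Since both $(e^{-tS_\beta})_{t\ge0}$ and $(e^{-t\tilde{S}_\beta})_{t\ge0}$ are contractions (as the operators are non-negative self-adjoint) and extension-by-zero is an isometry from $L^2(B_{\sqrt{N}\delta})$ into $L^2(\R^{(N-1)d})$, the integrand is dominated by $2e^{-\lambda t}\norm*{f}_{L^2(\R^{(N-1)d})}$, which belongs to $L^1((0,\infty))$.

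The conclusion then follows from the dominated convergence theorem applied in the $t$-variable, combined with taking the supremum over the unit ball $\{\norm*{f}_{L^2(\R^{(N-1)d})}\le 1\}$, which commutes with the pointwise bound. I do not anticipate any serious obstacle at this stage: the argument is essentially a soft corollary of \cref{lemma:SemigroupConvergence}, whose proof is the genuine technical work (and where the uniform control of Brownian hitting probabilities on carefully chosen annuli is the actual engine). The only point worth verifying carefully is compatibility of the Laplace transform formula with the Dirichlet realisation on the ball, which is immediate since the Feynman--Kac semigroup used to define $e^{-tS_\beta}$ is strongly continuous and contractive on $L^2(B_{\sqrt{N}\delta})$, so that the standard spectral-theoretic proof of the Laplace identity applies without modification.
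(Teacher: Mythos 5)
Your proposal is correct and follows essentially the same route as the paper: both express the resolvents as Laplace transforms of the semigroups, pass the norm under the integral, invoke \cref{lemma:SemigroupConvergence} for pointwise-in-$t$ convergence uniform over the unit ball, and dominate the integrand by $2e^{-\lambda t}$ using contractivity (from non-negativity of $S_\beta$, $\tilde{S}_\beta$) to apply dominated convergence. The additional remarks you make (extension-by-zero commuting with the Bochner integral and being an $L^2$-isometry) are correct and only make explicit what the paper leaves implicit.
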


We have therefore shown that $R(\lambda,-\tilde{S}_\beta)$ converges in a generalized norm sense to $R(\lambda,-S_\beta)$ for $\lambda\in\R$, $\lambda>0$ as $\beta \to \infty$. We now extend this convergence to all $\lambda \in \Sigma \coloneqq \C\setminus(-\infty,0]$.

\begin{prop}
\label{prop:ResolventConvergence}
For any $f \in C_0(\R;\C)$\footnote{The space of continuous functions which vanish at infinity equipped with the supremum norm.}, we have that
\begin{equation}
\lim_{\beta \to \infty} \lVert f(S_\beta)-f(\tilde{S}_\beta) \rVert =0 \,.
\label{eq:functionconvergence}
\end{equation}
It follows from this and~\cref{lemma:Resolvent1} that for any $\lambda\in\Sigma$, 
\begin{equation}
\label{eq:normresolvent}
\lim_{\beta \to \infty }\left|\norm*{R(\lambda,-S_\beta)}_{L^2(B_{\sqrt{N}\delta})}-\norm*{R(\lambda,-\tilde{S}_\beta)}_{L^2(\R^{(N-1)d})}\right| =0 \,. 
\end{equation}
\end{prop}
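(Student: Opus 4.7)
The plan is to deduce \eqref{eq:functionconvergence} from \cref{lemma:Resolvent1} by combining the continuous functional calculus with a Stone--Weierstrass density argument, and then to read off \eqref{eq:normresolvent} as an immediate consequence via the reverse triangle inequality. Since $S_\beta$ and $\tilde S_\beta$ are non-negative self-adjoint, their spectra lie in $[0,\infty)$, so $f(S_\beta)$ and $f(\tilde S_\beta)$ make sense for every $f \in C_0(\R;\C)$. To place both operators on the same Hilbert space I would define
\begin{equation}
\Phi_\beta(f)g \coloneqq \overline{f(S_\beta)(g\rvert)}, \qquad \tilde\Phi_\beta(f) \coloneqq f(\tilde S_\beta),
\end{equation}
as bounded operators on $L^2(\R^{(N-1)d})$. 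A short check, using that extension-by-zero followed by restriction is the identity on $L^2(B_{\sqrt N\delta})$, shows that $\Phi_\beta$ is a contractive algebra homomorphism from $C_0([0,\infty);\C)$ to the bounded operators on $L^2(\R^{(N-1)d})$, and the isometric nature of extension by zero gives $\|\Phi_\beta(f)\| = \|f(S_\beta)\|_{L^2(B_{\sqrt N\delta})}$. With this rephrasing, \cref{lemma:Resolvent1} becomes $\|\Phi_\beta(g_\lambda) - \tilde\Phi_\beta(g_\lambda)\| \to 0$ as $\beta \to \infty$ for every $\lambda > 0$, where $g_\lambda(x) \coloneqq 1/(\lambda + x)$.

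The first step is to extend this convergence from the single resolvents to the $\ast$-subalgebra $\mathcal A \subset C_0([0,\infty);\C)$ generated by $\{g_\lambda : \lambda > 0\}$. Since $\Phi_\beta, \tilde\Phi_\beta$ are contractive algebra homomorphisms, iterating the product rule
\begin{equation}
\|\Phi_\beta(fg) - \tilde\Phi_\beta(fg)\| \le \|f\|_\infty \|\Phi_\beta(g) - \tilde\Phi_\beta(g)\| + \|g\|_\infty \|\Phi_\beta(f) - \tilde\Phi_\beta(f)\|
\end{equation}
together with linearity shows $\|\Phi_\beta(P) - \tilde\Phi_\beta(P)\| \to 0$ for every $P \in \mathcal A$. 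The second step is to check that $\mathcal A$ separates points of $[0,\infty)$ (each $g_\lambda$ is strictly decreasing there) and vanishes nowhere (each $g_\lambda$ is strictly positive), and is closed under complex conjugation since its generators are real-valued; the complex Stone--Weierstrass theorem then yields that $\mathcal A$ is dense in $C_0([0,\infty);\C)$.

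The convergence \eqref{eq:functionconvergence} then follows by a $3\eps$ argument: given $f \in C_0(\R;\C)$, restrict to $[0,\infty)$, choose $P \in \mathcal A$ with $\|f - P\|_\infty < \eps/3$, and bound
\begin{equation}
\|\Phi_\beta(f) - \tilde\Phi_\beta(f)\| \le \|\Phi_\beta(f - P)\| + \|\Phi_\beta(P) - \tilde\Phi_\beta(P)\| + \|\tilde\Phi_\beta(P - f)\| \le \tfrac{2\eps}{3} + \|\Phi_\beta(P) - \tilde\Phi_\beta(P)\|,
\end{equation}
which is $< \eps$ for $\beta$ sufficiently large. For \eqref{eq:normresolvent}, observe that for every $\lambda \in \Sigma$ the function $g_\lambda\rvert_{[0,\infty)}$ belongs to $C_0([0,\infty);\C)$, so \eqref{eq:functionconvergence} applied to $f = g_\lambda$, combined with the reverse triangle inequality and the identifications $\|\Phi_\beta(g_\lambda)\| = \|R(\lambda, -S_\beta)\|_{L^2(B_{\sqrt N\delta})}$ and $\|\tilde\Phi_\beta(g_\lambda)\| = \|R(\lambda, -\tilde S_\beta)\|_{L^2(\R^{(N-1)d})}$, delivers the result. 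The main conceptual hurdle is the setup of $\Phi_\beta$ in the first paragraph---once the two operators are put on a common Hilbert space, the rest is a textbook application of Stone--Weierstrass, analogous to the standard derivation that norm-resolvent convergence implies $f(A_n) \to f(A)$ for $f \in C_0(\R)$ (cf.~Reed--Simon, vol.~I).
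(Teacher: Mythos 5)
Your proposal is correct and follows essentially the same route as the paper: both arguments start from the resolvent convergence of \cref{lemma:Resolvent1}, show that the functions for which the desired norm convergence holds form a closed $*$-subalgebra (via the same Leibniz product-rule estimate), invoke Stone--Weierstrass using the resolvent functions as generators, and then pass to $\lambda \in \Sigma$ by noting $g_\lambda$ lies in the relevant $C_0$ class. Your reorganisation as a contractive homomorphism $\Phi_\beta$ is a clean way to express the same computation the paper does in Claims 1 and 2.
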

\begin{proof}
We begin by demonstrating the convergence in \eqref{eq:normresolvent}~when $\lambda>0$. To this end, note that for $\lambda\notin\spec (-S_\beta)$ we have that 
\begin{equation}
\begin{aligned}
\label{eq:Resolvent}
\norm*{\overline{R(\lambda,-S_\beta)(\cdot\rvert)}}_{L^2(\R^{(N-1)d})}\,&=\,\sup_{\norm*{f}_{L^2(\R^{(N-1)d})}=1}\norm*{\overline{R(\lambda,-S_\beta)(f\rvert)}}_{L^2(\R^{(N-1)d})} \\
&=\,\sup_{\norm*{f}_{L^2(B_{\sqrt{N}\delta})}=1}\norm*{R(\lambda,-S_\beta)(f)}_{L^2(B_{\sqrt{N}\delta})}\\
&=\,\norm*{R(\lambda,-S_\beta)}_{L^2(B_{\sqrt{N}\delta})}\,. 
\end{aligned}
\end{equation}
By~\cref{lemma:Resolvent1}, for $\lambda>0$ we then have that 
\begin{equation}
\label{eq:Resolvent2}
\begin{aligned}
&\abs*{\norm*{R(\lambda,-S_\beta)}_{L^2(B_{\sqrt{N}\delta})} - \norm*{R(\lambda,-\tilde{S}_\beta)}_{L^2(\R^{(N-1)d})}}\\
&\qquad\le\, \norm*{\overline{R(\lambda,-S_\beta)(\cdot\rvert)} - R(\lambda,-\tilde{S}_\beta)}_{L^2(\R^{(N-1)d})} \to 0 \, ,
\end{aligned}
\end{equation}
as $\beta \to \infty$. 

The rest of the proof relies on the following two claims.\\

\noindent {\bf Claim 1.}~
Note that since the operators $S_\beta, \tilde{S}_\beta$ are self-adjoint operators $f(S_\beta),f(\tilde{S}_\beta)$ are well-defined as bounded operators on $L^2(\bar{B}_{\sqrt{N}\delta})$ and $L^2(\R^{(N-1)d})$, respectively, for all $f \in C_0(\R;\C)$ (see~\cite[Theorem 5.9]{S12}). Then, the set $F$, defined as 
\[
F\,\coloneqq\,
\bigg\{f\in C_0(\R; \C)\,:\,
\lim_{\beta \rightarrow\infty}\norm*{f(\tilde{S}_\beta)-\overline{f(S_\beta)(\cdot\rvert)}}_{L^2(\R^{(N-1)d})}=0\bigg\}\,, 
\]
is a $*$-subalgebra of $C_0(\R;\C)$, i.e.~it is closed under addition, involution, and multiplication. The closure under addition and involution can be checked in straightforward manner. To show that it is closed under multiplication, we pick some $f,g\in F$ and argue as follows
\[
\begin{aligned}
&\,\norm*{\overline{fg(S_\beta)(\cdot\rvert)}-(fg)(\tilde{S}_\beta)}_{L^2(\R^{(N-1)d})}\\
\le & \,\norm*{g(S_\beta)}_{L^2(\bar{B}_{\sqrt{N}\delta})}\norm*{\overline{f(S_\beta)(\cdot\rvert)}-f(\tilde{S}_\beta)}_{L^2(\R^{(N-1)d})}\\
&\,+\norm*{f(\tilde{S}_\beta)}_{L^2(\R^{(N-1)d})}\norm*{\overline{g(S_\beta)(\cdot\rvert)}-g(\tilde{S}_\beta)}_{L^2(\R^{(N-1)d})} \to 0
\end{aligned}
\]
as $\beta \to \infty$, since the operator norms of $g(S_\beta),\,f(\tilde{S}_\beta)$, are bounded by the uniform norms of $f,g$.
 \\

\noindent {\bf Claim 2.} We now assert that $F=C_0(\R;\C)$. 
We first note that $f_\lambda(\cdot)\coloneqq(\lambda-\cdot)^{-1}$ is contained in $F$ for each $\lambda>0$, by~\cref{lemma:Resolvent1}.  
Thus, $F$ vanishes nowhere and separates points, from which it follows that $F$ is dense in $C_0(\R;\C)$ by the Stone--Weierstrass theorem\footnote{We apply the standard Stone--Weierstrass theorem to the one-point compactification}. 
An $\eps/3$-argument shows that $F$ is closed in $C_0(\R;\C)$, thus proving the claim.
\\

We now conclude the proof of \cref{prop:ResolventConvergence}.
We have already shown~\eqref{eq:Resolvent} for $\lambda>0$. 
We are now left to show it for all $\lambda \in \C \setminus \R$ . 
For any such $\lambda$, the function $f_\lambda(\cdot)=(\lambda-\cdot)^{-1}$ is an element of $C_0(\R;\Sigma)$, and consequently of $F(\Sigma)$. 
The result thus follows. 
\end{proof}

\begin{prop}
\label{thm:SpectralConvergence} 
Both $\tilde{S}_\beta,S_\beta$ have pure point spectrum for all $\beta>0$ with a simple first eigenvalue. Furthermore, if $0\leq \tilde{\lambda}_{1,\beta}<\tilde{\lambda}_{2,\beta}\leq \tilde{\lambda}_{3,\beta}\leq   \dots$ (resp. $0<{\lambda}_{1,\beta}\leq{\lambda}_{2,\beta}\leq {\lambda}_{3,\beta} \leq \dots$) denote the eigenvalues of $\tilde{S}_\beta$ (resp. $S_\beta$), then for all $k \geq 1$ and $\eps>0$, there exists a $\bar{\beta}>0$ such that for all $\beta \geq \bar{\beta}$,  the set
\begin{equation}
\Lambda(k,\beta)\coloneqq \{\lambda\in \spec(S_\beta) :|\tilde{\lambda}_{k,\beta}-\lambda|\leq \eps\}\,,
\label{eq:spec1}
\end{equation}
is non-empty. 
Analogously, for all $k \geq 1$ and $\eps>0$, there exists a $\bar{\beta}>0$ such that for all $\beta \geq \bar{\beta}$, the set
\begin{equation}
\tilde{\Lambda}(k,\beta)\coloneqq\{\tilde{\lambda} \in \spec(\tilde{S}_\beta):|\lambda_{k,\beta}-\tilde{\lambda}|\leq \eps\}\,,
\label{eq:spec2}
\end{equation}
is non-empty.
\end{prop}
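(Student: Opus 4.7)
The plan is to split the proof into three parts: establishing the pure point spectra, the simplicity of the bottom eigenvalues, and finally the spectral convergence.

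\textbf{Pure point spectra and simple ground states.} For $S_\beta$ on $L^2(B_{\sqrt{N}\delta})$, the unitary conjugacy~\eqref{eq:conjugacy} identifies its spectrum with that of $-L_D$, which is discrete by paragraph~3 of~\cref{sub:semigroups_and_generators}. For $\tilde{S}_\beta$ on $L^2(\R^{(N-1)d})$, the confining property $V_\beta+\beta V_0\to\infty$ as $|y|\to\infty$ (which holds because $V_0$ diverges at infinity while $V_\beta$ is locally bounded) ensures that $\tilde{S}_\beta$ has compact resolvent by standard Schr\"odinger operator theory, so pure point spectrum follows. For both operators, the associated semigroup has a strictly positive integral kernel on the interior of its domain via the Feynman--Kac representation~\eqref{eq:Pn} (and its Dirichlet analogue for $S_\beta$), hence is positivity-improving; the Perron--Frobenius theorem for such semigroups then implies that the ground state is unique and strictly positive, so its eigenvalue is simple.

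\textbf{Spectral convergence.} I would prove non-emptiness of $\Lambda(k,\beta)$; the argument for $\tilde{\Lambda}(k,\beta)$ is symmetric. Given $\tilde{\lambda}=\tilde{\lambda}_{k,\beta}$ and a normalised eigenfunction $\tilde{e}_{k,\beta}$ of $\tilde{S}_\beta$, fix a smooth bump $\phi\in C_c(\R;[0,1])$ with $\phi(0)=1$ and $\mathrm{supp}\,\phi\subset(-1,1)$, and set $f_\beta(x):=\phi((x-\tilde{\lambda})/\eps)$. By the spectral theorem, $\norm*{f_\beta(\tilde{S}_\beta)}\ge\norm*{f_\beta(\tilde{S}_\beta)\tilde{e}_{k,\beta}}_{L^2(\R^{(N-1)d})}=1$, while if $\spec(S_\beta)\cap(\tilde{\lambda}-\eps,\tilde{\lambda}+\eps)=\emptyset$, then $\norm*{f_\beta(S_\beta)}=\sup_{\mu\in\spec(S_\beta)}|f_\beta(\mu)|=0$ by the functional calculus. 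Applying the functional-calculus convergence~\eqref{eq:functionconvergence} in~\cref{prop:ResolventConvergence} to $f_\beta$, and using that extension by zero is an isometry (so $\norm*{\overline{f_\beta(S_\beta)(\cdot\rvert)}}_{L^2(\R^{(N-1)d})}=\norm*{f_\beta(S_\beta)}_{L^2(B_{\sqrt{N}\delta})}$), we reach a contradiction for $\beta$ sufficiently large.

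\textbf{Main obstacle.} The central technical subtlety is that the test function $f_\beta$ depends on $\beta$ through $\tilde{\lambda}_{k,\beta}$, whereas~\cref{prop:ResolventConvergence} supplies convergence for each fixed $f\in C_0(\R;\C)$. To restore uniformity, one needs an a priori bound $\tilde{\lambda}_{k,\beta}\le M_k$ valid for all large $\beta$, which reduces the relevant test functions to the compact equicontinuous family $\{\phi((\cdot-\mu)/\eps):\mu\in[0,M_k]\}$; an inspection of the proof of~\cref{lemma:SemigroupConvergence} (the bound obtained there depends on $f$ only through $\norm*{f}_{L^\infty}$ and the location of its support) then yields convergence uniform in $\mu$ over this family. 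Such an upper bound in turn follows from the min--max principle applied to trial functions localised at the unique minimum of $U_N$ identified in~\cref{lemma:mincrti}: expanding $U_N$ to quadratic order there bounds $\tilde{\lambda}_{k,\beta}$ from above by the $k$-th eigenvalue of an auxiliary harmonic oscillator, uniformly in $\beta$.
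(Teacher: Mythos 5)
Your approach to the spectral-convergence part is a genuine alternative to the paper's. The paper works directly with the self-adjoint identity $\lVert R(-\lambda+i,-\tilde S_\beta)\rVert_{L^2(\R^{(N-1)d})}^{-1}=\mathrm{dist}(\lambda+i,\spec(\tilde S_\beta))$ and the norm-resolvent convergence in \cref{prop:ResolventConvergence}, whereas you construct bump functions in the continuous functional calculus and compare $\lVert f_\beta(S_\beta)\rVert$ with $\lVert f_\beta(\tilde S_\beta)\rVert$. Both routes are driven by the same norm-resolvent input, and both must confront the fact that the point at which \cref{prop:ResolventConvergence} is applied moves with $\beta$; you are right to flag this as the central obstacle, and your a priori bound $\tilde\lambda_{k,\beta}\le M_k$ via min--max with Hermite-type trial functions at scale $\beta^{-1/2}$, localised near the unique critical point of $U_N$, is a legitimate way to confine the test functions to a compact family. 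Your replacements for the two citations in the paper (compact resolvent from the confining potential $V_\beta+\beta V_0\to\infty$ in place of~\cite{Sim09}; Perron--Frobenius for the positivity-improving Feynman--Kac semigroup in place of~\cite{AB97}) are also correct and arguably more self-contained.

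There is, however, a genuine error in your proposed mechanism for upgrading pointwise convergence to convergence uniform over the compact family. You assert that ``an inspection of the proof of~\cref{lemma:SemigroupConvergence}'' shows the bound there depends on $f$ ``only through $\lVert f\rVert_{L^\infty}$ and the location of its support.'' This conflates two unrelated objects both denoted $f$: in \cref{lemma:SemigroupConvergence}, $f$ is an initial datum in $L^2(\R^{(N-1)d})$ on which the Feynman--Kac semigroups act, and the estimate is in terms of $\lVert f\rVert_{L^2}$; it makes no statement about functions $f\in C_0(\R;\C)$ applied to the operators via the spectral theorem. The correct route is the standard covering argument that is already used in the paper to close the set $F$: cover your compact family $\{\phi((\cdot-\mu)/\eps):\mu\in[0,M_k]\}$ by finitely many $\eta/3$-balls in $C_0(\R;\C)$, apply \cref{prop:ResolventConvergence} at the centres to produce a finite collection of thresholds $\bar\beta_j$, and absorb the off-centre error using $\lVert g(A)\rVert\le\lVert g\rVert_{L^\infty}$, valid for every self-adjoint $A$ and $g\in C_0(\R;\C)$. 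With that replacement your argument closes; as written, the step you propose does not.
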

\begin{proof}
We already know that $S_\beta$ has pure point spectrum with a simple first eigenvalue since its spectrum is the same as that of $-L_D$\footnote{Note that here we write $\lambda_{k,\beta,N}=\lambda_{k,\beta}$, emphasising the dependence on $\beta$ and suppressing the dependence on $N$}. For $\tilde{S}_\beta$, the discreteness of the spectrum follows from~\cite[Theorem 1]{Sim09}  and the simplicity of the first eigenvalue follows from the same arguments as in the proof of \cite[Proposition 2.1]{AB97}, noting that $\tilde{S}_\beta$ is essentially self-adjoint in $L^2(\R^{(N-1)d})$. For $\lambda\in \C\setminus \R$, $\norm*{R(-\lambda^*,-S_\beta)}_{L^2(\bar{B}_{\sqrt{N}\delta})}$
is equal to the reciprocal of the distance of $\lambda$ from the spectrum of $S$ (and similarly for $R(-\lambda^*,-\tilde{S}_\beta)$).  Hence, for $\lambda\in\spec(S_\beta)$, using~\cref{prop:ResolventConvergence} and the fact that $\spec (S)\subseteq \R$, observe that 
\begin{align}
\lim_{\beta \to \infty}\mathrm{dist}(\lambda+i,\spec(\tilde{S}_\beta))^{-1}\,&=\,
\lim_{\beta\to \infty}\norm*{R(-\lambda+i,-\tilde{S}_\beta)}_{L^2(\R^{(N-1)d})}\\
&=\,1, 
\end{align}
from which~\eqref{eq:spec1} follows after applying the triangle inequality. The proof of~\eqref{eq:spec2} is similar.
\end{proof}

\subsection{\texorpdfstring{$\beta$}{beta}-asymptotics}
We are now prepared to apply the results of~\cite{Simon1983}. 
We rewrite the operator $\tilde{S}_\beta$ in the following form,
\begin{equation}
\label{eq:Sn}
\tilde{S}_\beta\,=\,-\beta^{-1}\Delta + \beta h+ g , 
\end{equation}
where $h,g$ can be determined from $V_\beta,V_0$. Note that we will check that $h,g$ satisfy the assumptions of~\cite[Theorem 1.1]{Simon1983} which we write out below:
\begin{enumerate}
\item[(A1)] the functions $h,\,g,$ are smooth, 
\item[(A2)] $g$ is bounded from below, and $h\ge0$, 
\item[(A3)] $h$ has a unique zero at zero, and $h(y)>0$ for $x\neq0$, 
\item[(A4)] the matrix $A=\frac12(D^2h)(0)$ is strictly positive-definite. 
\end{enumerate} 
It is clear that assumptions (A1)-(A3) are satisfied. Before we verify (A4), we introduce some relevant quantitites. Let $\{a_i^2\}_{i=1}^{(N-1)d}$ denote the eigenvalues of $A$ in ascending order, and for $\underline{n}\in\N_0^{(N-1)d}$ define 
\begin{equation}
\label{eq:E_undern}
E_{\underline{n}}^N\,\coloneqq\,g(0) + \sum_{i=1}^{(N-1)d}(2n_i+1)a_i\,. 
\end{equation}
We now relabel the $E_{\underline{n}}^N$ by arranging them in ascending order as $E_1^N \leq E_2^N \leq E_3^N \cdots$.  We then have the following result.
\begin{prop}
The matrix $A$ is strictly positive-definite and all its eigenvalues are exactly equal to $(w''(0))^2$. It follows from this that $0=E_1^N<E_2^N=w''(0)$.
\end{prop}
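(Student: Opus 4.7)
The approach reduces everything to computing $D^2 U_N(0)$, after which the structure of $h$ near $y=0$ determines $A$ immediately, and the ground-state energy and first excitation follow from the Simon formula~\eqref{eq:E_undern}. I do not foresee any substantial obstacle; the argument is driven by the strong symmetry of the problem.

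\emph{Step 1: reduce $(D^2 h)(0)$ to $D^2 U_N(0)$.} Since $V_0\equiv 0$ on $B_{\sqrt{N}\delta}$, near $y=0$ we have $h = \tfrac14|\nabla U_N|^2$. Combined with $\nabla U_N(0)=0$ (from \cref{lemma:mincrti}), a direct second-derivative computation gives
\[
(D^2 h)(0) \;=\; \tfrac12\bigl(D^2 U_N(0)\bigr)^2,
\]
because the ``third-derivative times first-derivative'' term vanishes at $0$. Hence $A \;=\; \tfrac14\bigl(D^2 U_N(0)\bigr)^2$.

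\emph{Step 2: compute $D^2 U_N(0)$.} Since $W(x)=w(|x|)$ with $w(0)=w'(0)=0$, Taylor expansion gives $D^2 W(0) = w''(0)\,I_d$. Substituting into $H_N(x) = \tfrac{1}{2N}\sum_{i,j}W(x_i-x_j)$ and differentiating twice, one obtains
\[
(D^2 H_N)(0) \;=\; w''(0)\,(M\otimes I_d)\quad\text{on }\R^{Nd},\qquad M \;:=\; I_N - \tfrac{1}{N} J_N,
\]
where $J_N$ is the all-ones $N\times N$ matrix. But $M\otimes I_d$ is exactly the orthogonal projection $\Pi_N$ onto $\Gamma_N^\perp$, so restricting to $\Gamma_N^\perp$ gives $D^2 U_N(0) = w''(0)\,I_{(N-1)d}$, which is strictly positive-definite by \cref{ass:W}. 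Combined with Step~1, this yields $A = \tfrac{(w''(0))^2}{4}\,I_{(N-1)d}$, which is strictly positive-definite with every $a_i = \tfrac{w''(0)}{2}$.

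\emph{Step 3: assemble $E_1^N$ and $E_2^N$.} We have $g(0) = -\tfrac12\Delta U_N(0) = -\tfrac{(N-1)d}{2}\,w''(0)$. Substituting into~\eqref{eq:E_undern}, the ground state $\underline{n}=\underline{0}$ gives
\[
E_1^N \;=\; g(0) + (N-1)d\cdot\tfrac{w''(0)}{2} \;=\; 0,
\]
where the sum of the $a_i$ cancels $g(0)$ exactly. Raising one index $n_i$ from $0$ to $1$ adds $2a_i = w''(0)$ to the ground-state energy, and all such excitations are degenerate because $A$ is a scalar multiple of the identity; hence $E_2^N = w''(0)$. The only structural observation needed is that the full rotational/permutation symmetry of the problem, together with $w(0)=w'(0)=0$, forces $D^2 U_N(0)$ to be a scalar multiple of the identity on $\Gamma_N^\perp$, after which the exact cancellation $g(0)+\sum_i a_i = 0$ is automatic.
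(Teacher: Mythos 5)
Your proof is correct and takes essentially the same route as the paper: compute $D^2 U_N(0)$ from $D^2 H_N(0)$, deduce $A$, and then read off $E_1^N$ and $E_2^N$ from the Simon formula~\eqref{eq:E_undern}. The one place where you streamline the argument is Step~2: you observe directly that $(D^2 H_N)(0) = w''(0)\,(M\otimes I_d)$ with $M\otimes I_d = \Pi_N$, the orthogonal projection onto $\Gamma_N^\perp$, so that the restriction to $\Gamma_N^\perp$ is $w''(0)\,I_{(N-1)d}$ with no further computation. The paper instead identifies $(D^2 H_N)(0)$ as a block-circulant matrix and reads its spectrum off from that structure; both arguments are equally valid, and yours is arguably the more transparent one since it bypasses the circulant machinery.

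One thing worth flagging: your computation gives $A = \tfrac{1}{4}\bigl(D^2 U_N(0)\bigr)^2 = \tfrac{(w''(0))^2}{4}\,I_{(N-1)d}$, so the eigenvalues of $A$ are $\tfrac{(w''(0))^2}{4}$, not $(w''(0))^2$ as the proposition literally states. Equivalently, $a_i = \tfrac{w''(0)}{2}$ for every $i$, which is what the paper's own proof derives and what makes the cancellation $g(0) + \sum_i a_i = 0$ come out right. So the discrepancy is a typo in the statement of the proposition, not an error in your argument: the conclusion $0 = E_1^N < E_2^N = w''(0)$ requires exactly the value $a_i = \tfrac{w''(0)}{2}$ that you (and the paper's proof) obtain.
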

\begin{proof}
Notice that
\begin{equation}
A\,=\,\frac12 (D^2 h)(0)\,=\, \frac14(D^2U_N)^2 (0)\, .
\end{equation}
It follows that the eigenvalues of $A$ can be obtained from the squares of the eigenvalues of $(D^2U_N)(0)$. Since $(D^2U_N)(0)$ is just the restriction of $(D^2H_N)(0)$ to $\Gamma_N^\perp$ and $\Gamma_N$ is exactly the eigenspace of the eigenvalue $0$, it follows that the eigenvalues of $(D^2U_N)(0)$ are the same as those of $(D^2H_N)(0)$ excluding $0$. 

One can now check that $(D^2H_N)(0)$ is a block circulant matrix with diagonal blocks given by
\begin{equation}
\frac{N-1}{N}w''(0)\mathrm{Id}
\end{equation}
and off-diagonal blocks given by 
\begin{equation}
-\frac{1}{N}w''(0)\mathrm{Id} \, .
\end{equation}
Using the properties of circulant matrices, one can check that its spectrum consists of the eigenvalue $0$ with multiplicity $d$ and the eigenvalue $w''(0)$ having multiplicity $(N-1)d$. It follows from this and the previous discussion that $a_i= \frac{w''(0)}{2}$ for all $i=1,\dots,(N-1)d$.  Note further that $g(0)=-\frac12\Delta U_N(0)$. We thus have that
\begin{equation}
E_1^N\,=\, -\frac12\Delta U_N(0) +\frac{w''(0)(N-1)d}{2} =0\,,
\end{equation}
where we have used the fact that the $\Delta U_N(0)$ is just the sum of the eigenvalues of $(D^2U_N)(0)$. By similar arguments, we have that
\begin{equation}
E_2^N \,=\, w''(0)\,.
\end{equation}
This completes the proof.
\end{proof}
We now complete the proof of~\cref{thm:simon}.
\begin{prop}
For all $k \geq1$, we have that
\begin{equation}
\lim_{\beta \to \infty}\tilde{\lambda}_{k,\beta}\,=\,E_k^N \,.
\label{eq:Simon1}
\end{equation}
Furthermore, for $k=1,2$, we have that
\begin{equation}
\lim_{\beta \to \infty}\lambda_{k,\beta}\,=\,E_k^N\, .
\label{eq:Simon2}
\end{equation}
\end{prop}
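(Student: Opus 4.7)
The plan is to establish \eqref{eq:Simon1} by invoking Simon's semiclassical asymptotic result~\cite[Theorem 1.1]{Simon1983} applied to $\tilde{S}_\beta$, and then to transfer the information to $S_\beta$ for $k=1,2$ using the decay estimate from \cref{thm:mathieu} together with the spectral comparison provided by \cref{thm:SpectralConvergence} and a min-max argument.

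First I would verify that the operator $\tilde{S}_\beta = -\beta^{-1}\Delta + \beta h + g$, with $h = \tfrac14 |\nabla U_N|^2 + V_0$ and $g = -\tfrac12 \Delta U_N$, fits into the framework of \cite{Simon1983}. Smoothness of $h$ and $g$ is immediate from the smoothness of $U_N$ and $V_0$; boundedness of the second derivatives of $W$ yields the lower bound on $g$; non-negativity of $h$ is obvious; the uniqueness of the zero of $h$ at $0$ follows from \cref{lemma:mincrti} (which identifies $0$ as the sole critical point of $U_N$) together with the positivity of $V_0$ outside $\bar{B}_{\sqrt{N}\delta}$; and (A4) has already been established. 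After identifying the semiclassical parameter with $\hbar = \beta^{-1}$ and rescaling, Simon's theorem produces $\tilde\lambda_{k,\beta} \to E_k^N$ for every $k \geq 1$, establishing \eqref{eq:Simon1}.

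For \eqref{eq:Simon2} with $k=1$, the convergence $\lambda_{1,\beta}\to 0 = E_1^N$ is an immediate consequence of \cref{thm:mathieu}, since the choice of $\delta$ satisfying \eqref{eq:deltaCondition} forces $\beta^{-1}\ln \lambda_{1,\beta}$ to be bounded above by a strictly negative constant in the limit. For $k=2$, I argue as follows. For the lower bound, the extension by zero of any $u \in H^1_0(B_{\sqrt{N}\delta})$ to $\R^{(N-1)d}$ lies in the form domain of $\tilde{S}_\beta$, and since $V_0 \equiv 0$ on $\bar{B}_{\sqrt{N}\delta}$ the two quadratic forms agree on such functions. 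The min-max principle therefore yields $\tilde\lambda_{k,\beta} \leq \lambda_{k,\beta}$ for every $k$, so that $\liminf_{\beta\to\infty} \lambda_{2,\beta} \geq w''(0)$. For the matching upper bound, \cref{thm:SpectralConvergence} applied with $k=2$ furnishes, for every $\epsilon>0$ and all sufficiently large $\beta$, an eigenvalue $\mu_\beta \in \spec(S_\beta)$ with $|\mu_\beta - \tilde\lambda_{2,\beta}| \leq \epsilon$; since $\lambda_{1,\beta} \to 0 < w''(0)$, this $\mu_\beta$ must differ from $\lambda_{1,\beta}$ for large $\beta$, and so $\lambda_{2,\beta} \leq \mu_\beta$. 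Sending first $\beta\to \infty$ and then $\epsilon\to 0$ gives $\limsup_{\beta\to\infty}\lambda_{2,\beta} \leq w''(0)$, completing the proof.

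The principal technical obstacle is the correct invocation of Simon's theorem: \cite{Simon1983} is stated for semiclassical Schr\"odinger operators on all of Euclidean space, so one needs to check that after the rescaling $\hbar = \beta^{-1}$ the resulting operator $-\hbar^2 \Delta + h + \hbar g$ satisfies the growth and confinement hypotheses. The coercivity is supplied by $V_0(y) \to \infty$ as $|y|\to \infty$, which is precisely why the approximating family was constructed this way, while the lower-order perturbation $\hbar g$ is controlled by the boundedness of $\Delta U_N$.
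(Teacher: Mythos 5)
Your proof is correct, and for the $k=2$ case it takes a genuinely different route from the paper. The paper establishes that $\lambda_{2,\beta}$ is bounded away from zero by a careful analysis of the spectral projections $\mathbf{1}_{A_\eps}(S_\beta)$ and $\mathbf{1}_{A_\eps}(\tilde S_\beta)$: it proves these converge in operator norm, compares their ranks, and concludes that exactly one eigenvalue of $S_\beta$ lies near zero (yielding the separation estimate \eqref{eq:separation}); a contradiction argument then pins $\lambda_{2,\beta}$ near $E_2^N$. You instead use domain monotonicity: the zero extension of any $u\in H^1_0(B_{\sqrt N\delta})$ lies in the form domain of $\tilde S_\beta$, the quadratic forms agree there because $V_0\equiv 0$ on the ball, and the min-max principle immediately gives $\tilde\lambda_{k,\beta}\le\lambda_{k,\beta}$ for all $k$, hence $\liminf_{\beta\to\infty}\lambda_{2,\beta}\ge w''(0)$ with no rank bookkeeping. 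For the matching upper bound both proofs invoke the two-sided spectral comparison of \cref{thm:SpectralConvergence} together with $\lambda_{1,\beta}\to0$, though the paper packages this inside the contradiction scheme while you use it directly. Your argument is shorter and more transparent; what the paper's projection argument buys in exchange is the explicit uniform separation \eqref{eq:separation}, which is more than is needed for the stated result but could be useful if one wanted to track higher eigenvalues of $S_\beta$. The treatment of \eqref{eq:Simon1} via \cite[Theorem 1.1]{Simon1983} and of the $k=1$ case of \eqref{eq:Simon2} via \cref{thm:mathieu} coincides with the paper.
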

\begin{proof}
The proof of~\eqref{eq:Simon1}~is a direct consequence of \cite[Theorem 1.1]{S82}. We proceed to the proof of~\eqref{eq:Simon2}. The proof in the $k=1$ case follows directly from~\cref{thm:mathieu} (clearly $E_1^N=0$).

We now proceed to the $k=2$ case.  Note that there exists some $\delta>0$ such that the $E_1^N,E_2^N$ are separated by at least $\delta>0$. 
Then, for any $\eps\in (0,\delta/6)$, we can use~\cref{prop:ResolventConvergence}, \cref{thm:mathieu}, and the first equality in~\eqref{eq:Simon1}, to argue that there exists a $\bar{\beta}<\infty$ such that for all $\beta \geq \bar{\beta}$, 
\begin{align}
|\lambda_{k,\beta}-\tilde{\lambda}|\,\leq \, \eps, \quad &|\tilde{\lambda}_{k,\beta}-\lambda|\,\leq \,\eps , \quad \forall (\lambda,\tilde{\lambda}) \in \Lambda(k,\beta)\times \tilde{\Lambda}(k,\beta), \, k=1,2\label{eq:lest1}\\
&\left|\tilde{\lambda}_{2,\beta}-E_2^N\right|\leq \,\eps, 
\label{eq:lest2}\\
|\lambda_{1,\beta}|\leq \eps \quad&\,|\tilde{\lambda}_{1,\beta}|\leq \eps\, .
\label{eq:improvedtilde}
\end{align}

We start by showing that for $\beta\geq \bar{\beta}$ $\lambda_{m,\beta}\notin \Lambda(1,\beta)$ for all $m\geq 2$. We note by~\cite[Theorem 5.9]{S12} that for all bounded, measurable $f:\R \to \C$, the operators $f(S_\beta),f(\tilde{S}_\beta)$ are well-defined as bounded operators and can be expressed as
\begin{align}
\int_{\bar{B}_{\sqrt{N}\delta}}\psi f(S_\beta)\varphi \, \dd x \coloneqq &\, \int_{\R} f(\lambda) \, \dd\mu_{\beta,\psi,\varphi}(\lambda) \\
\int_{\R^{(N-1)d}}\tilde{\psi} f(\tilde{S}_\beta)\tilde{\varphi} \, \dd x \coloneqq &\, \int_{\R} f(\lambda) \, \dd\tilde{\mu}_{\beta,\tilde{\psi},\tilde{\varphi}}(\lambda) 
\end{align} 
for $\psi,\varphi\in L^2(\bar{B}_{\sqrt{N}\delta})\,\tilde{\psi},\tilde{\varphi}\in L^2(\R^{(N-1)d})$, where $\mu_{\beta,\psi,\varphi},\tilde{\mu}_{\beta,\tilde{\psi},\tilde{\varphi}}$ are complex Borel measures supported on the spectrum of $S_\beta$ and $\tilde{S}_\beta$, respectively. We now consider the operators $\mathbf{1}_{A_\eps}(S_\beta),\mathbf{1}_{A_\eps}(\tilde{S}_\beta)$, where $A_\eps=[-3\eps,3\eps]$  and show that they converge to each other in generalised norm sense. To this end, we consider continuous and compactly supported functions $f \leq \mathbf{1}_{A_\eps}\leq g$ such that $\mathrm{supp}\,(g-f) \subset (-4\eps, -2\eps)\bigcup(-2\eps, 4\eps)$. We then have that
\allowdisplaybreaks
\begin{align}
&\, \lVert \overline{\mathbf{1}_{A_\eps}(S_\beta)}(\cdot\rvert)-\mathbf{1}_{A_\eps}(\tilde{S}_\beta)(\cdot) \rVert_{L^2(\R^{(N-1)d})}\\
=& \, \sup_{\norm{\psi}_{L^2(\R^{(N-1)d})},\norm{\psi}_{L^2(\R^{(N-1)d})}\leq 1}\int_{\bar{B}_{\sqrt{N}\delta}} \psi \rvert\times\overline{\mathbf{1}_{A_\eps}(S_\beta)\varphi\rvert} \dd x -\int_{\bar{B}_{\sqrt{N}\delta}} \psi \mathbf{1}_{A_\eps}(\tilde{S}_\beta)\varphi \dd x\\
=&\, \sup_{\norm{\psi}_{L^2(\R^{(N-1)d})},\norm{\psi}_{L^2(\R^{(N-1)d})}\leq 1} \int_{\R} \mathbf{1}_{A_\eps}(\lambda) \, \dd\mu_{\beta,\psi \rvert,\varphi\rvert}(\lambda) - \int_{\R} \mathbf{1}_{A_\eps}(\lambda) \, \dd\tilde{\mu}_{\beta,\psi ,\varphi}(\lambda) \\
\leq & \, \sup_{\norm{\psi}_{L^2(\R^{(N-1)d})},\norm{\psi}_{L^2(\R^{(N-1)d})}\leq 1} \int_{\R} g(\lambda) \, \dd\mu_{\beta,\psi \rvert,\varphi\rvert}(\lambda) - \int_{\R} f(\lambda) \, \dd\tilde{\mu}_{\beta,\psi ,\varphi}(\lambda) \\
=&\, \sup_{\norm{\psi}_{L^2(\R^{(N-1)d})},\norm{\psi}_{L^2(\R^{(N-1)d})}\leq 1} \int_{\R} g(\lambda) \, \dd\mu_{\beta,\psi \rvert,\varphi\rvert}(\lambda)- \int_{\R} g(\lambda) \, \dd\tilde{\mu}_{\beta,\psi ,\varphi}(\lambda)  \\&\,+ \int_{\R} (g-f)(\lambda) \, \dd\tilde{\mu}_{\beta,\psi ,\varphi}(\lambda) \\
\leq &\, \lVert \overline{g(S_\beta)}(\cdot\rvert)-g(\tilde{S}_\beta)(\cdot) \rVert_{L^2(\R^{(N-1)d})}\\ &\,+ \sup_{\norm{\psi}_{L^2(\R^{(N-1)d})},\norm{\psi}_{L^2(\R^{(N-1)d})}\leq 1} \int_{\R} (g-f)(\lambda) \, \dd\tilde{\mu}_{\beta,\psi ,\varphi}(\lambda)\, .\label{eq:opnormconv}
\end{align}
Note that using~\cref{prop:ResolventConvergence}, the first term on the right hand side of~\eqref{eq:opnormconv} goes to $0$ as $\beta \to \infty$. For the second term, we note that by~\eqref{eq:lest1},~\eqref{eq:lest2}, and~\eqref{eq:improvedtilde}, for $\beta \geq \bar{\beta}$ $\spec (\tilde{S}_\beta)\cap \mathrm{supp}\, (g-f)= \emptyset$. Thus, by~\cite[Proposition 5.10]{S12}, the second term on the right hand side of~\eqref{eq:opnormconv} is $0$.

We now note that $\tilde{P}_{\beta,\eps} \coloneqq \mathbf{1}_{A_\eps}(\tilde{S}_\beta)$ (resp. $P_{\beta,\eps} \coloneqq \mathbf{1}_{A_\eps}(S_\beta)$) is nothing but the orthogonal projection onto the part of the spectrum of $\tilde{S}_\beta$ (resp. $S_\beta$) contained in $A_\eps$ -- see for instance \cite[Problem 2, Chapter VII]{RS1}. For $\beta \geq \bar{\beta}$ by~\eqref{eq:lest1} and ~\eqref{eq:lest2}, $A_\eps \bigcap \spec (\tilde{S}_\beta)=\{\tilde{\lambda}_{1,\beta}\}$ which has multiplicity $1$ and so $\tilde{P}_{\beta,\eps}$ has rank $1$. It follows from the operator norm convergence established in~\eqref{eq:opnormconv} that $P_{\beta,\eps}$ must also have rank $1$, for $\beta$ sufficiently large. Note that we can find a large $\bar{\beta}$\footnote{we abuse notation here by reusing $\bar{\beta}$} such that, for all $\beta\geq \bar{\beta}$,
\begin{equation}
 \lVert \overline{P_{\beta,\eps}}(\cdot\rvert)-\tilde{P}_{\beta,\eps}(\cdot) \rVert_{L^2(\R^{(N-1)d})} <1\,.
\end{equation}
This must imply that $\mathrm{rank}(P_{\beta,\eps})\leq\mathrm{rank}(\tilde{P}_{\beta,\eps})$. Assume by contradiction that this is not the case. Then, there must exist a non-trivial unit norm element $\varphi \in L^2(\bar{B}_{\sqrt{N}\delta})\subset L^2(\R^{(N-1)d})$ such that $\varphi \in \mathrm{ker}(\tilde{P}_{\beta,\eps})\bigcap \mathrm{im}(P_{\beta,\eps})$ (if not the map $P_{\beta,\eps}:\mathrm{im}(P_{\beta,\eps}) \to \mathrm{im}(\tilde{P}_{\beta,\eps})$ is an injection and so $\mathrm{rank}(P_{\beta,\eps})\leq\mathrm{rank}(\tilde{P}_{\beta,\eps})$.) The existence of such an element would however imply that 
\begin{align}
 &\,\lVert \overline{P_{\beta,\eps}}(\cdot\rvert)-\tilde{P}_{\beta,\eps}(\cdot) \rVert_{L^2(\R^{(N-1)d})}\geq 1\,, 
\end{align}
which is a contradiction. Thus, for $\beta \geq \bar{\beta}$, $\mathrm{rank}(P_{\beta,\eps})\leq 1$. But, we already know from~\eqref{eq:improvedtilde} that for all $\beta\geq \bar{\beta}$ $|\lambda_{1,\beta}|\leq \eps$ and so  $\mathrm{rank}(P_{\beta,\eps})=1$. As a consequence, we have that the set $A_\eps$ cannot contain anymore eigenvalues of $\tilde{S}_\beta$ and in fact
\begin{equation}
|\lambda_{m,\beta}|>3 \eps\, ,
\label{eq:separation}
\end{equation}
for all $m\geq 2$ from which it must follow that $\lambda_{m,\beta}\notin \Lambda(1,\beta)$ for all $m\geq 2$. 

We now argue that 
\begin{equation}
|\lambda_{2,\beta}-E_2^N|\leq 3 \eps
\label{eq:final}
\end{equation}
 for all $\beta \geq \bar{\beta}$. Assume by contradiction that this is not the case, i.e.~for some $\beta \geq \bar{\beta}$ we have that $|\lambda_{2,\beta}-E_2^N|> 3 \eps$.  We know that $\Lambda(2,\beta)$ contains $\lambda_{m',\beta}$ for some $m'\geq 2$ (the case $m'= 1$ can be ruled out by~\eqref{eq:lest2} and~\eqref{eq:improvedtilde}). Note also that $m'\neq 2$. Indeed, if $m'=2$ then we would have
\begin{align}
&\,\left|\lambda_{2,\beta}-E_2^N\right|\\
\leq &\left|\lambda_{2,\beta}-\tilde{\lambda}_{2,\beta}\right|+\left|\tilde{\lambda}_{2,\beta}-E_2^N\right| \leq 2\eps\, ,
\end{align}
which is false by assumption. Thus, $m'>2$. This, implies that
\begin{equation}
 \left|\lambda_{m',\beta}-E_2^N\right| \leq 2\eps \, ,
\end{equation}
which would imply that $\lambda_{m',\beta}<\lambda_{2,\beta}$ which is clearly a contradiction. Thus,~\eqref{eq:final} holds true. Since $\eps>0$ is arbitrary, this establishes~\eqref{eq:Simon2} for $k=2$.
\end{proof}

\section{Proof of \texorpdfstring{\cref{thm:multiscale}}{multiscale convergence result}}

\begin{proof}[Proof of~\cref{thm:multiscale}]

i) For any $f \in C(B_{\star,\sqrt{N}\delta})$, we have
\begin{align}
\E\left[f(\hat{Y}_t)|Y_0=y\right]=& \,  \E\left[f(\hat{Y}_t)\mathbf{1}_{t<\tau_\delta}|Y_0=y\right] + \E\left[f(\hat{Y}_t)\mathbf{1}_{t\geq \tau_\delta}|Y_0=y\right]\\
=&\,  (P_t f)(y) +f(\star)\E \left[\mathbf{1}_{t\geq \tau_\delta}\right]\\
=& \, (P_t f)(y) +f(\star) -  f(\star) (P_t 1)(y)  \,, 
\end{align}
where we have simply used the definition of $P_t$ introduced in \cref{sub:semigroups_and_generators} and the fact that $f(\hat{Y}_{\tau_\delta})=f(\star)$. Using the heat kernel representation introduced in \cref{sub:semigroups_and_generators}, we have
\begin{align}
 \E\left[f(\hat{Y}_t)|Y_0=y\right]=& \, e^{-\lambda_{1,\beta,N}t}\left(\int_{\bar{B}_{\sqrt{N}\delta}}f e_1 \,\mathrm{d}p_N \right)e_1(y) + (P_t \mathsf{Q}_{\geq 2}f)(y)\\
 &\, - f(\star) (P_t \mathsf{Q}_{\geq 2}1)(y) \label{eq:preidentity}\\
 &\, + f(\star)\left(1- e^{-\lambda_{1,\beta,N}t}\left(\int_{\bar{B}_{\sqrt{N}\delta}} e_1 \,\mathrm{d}p_N \right)e_1(y) \right)\, .
\end{align} 
Using the explicit expression for the QSD introduced in \cref{sub:the_quasi_ergodic_and_quasi_stationary_distributions}, we can rewrite the above expression as 
\begin{align}
\E\left[f(\hat{Y}_t)|Y_0=y\right] =& \, \gamma_{\beta,N}(y) e^{-\lambda_{1,\beta,N}t}\left(\int_{\bar{B}_{\sqrt{N}\delta}}f \,\mathrm{d}q_N \right) + (1-\gamma_{\beta,N}(y) e^{-\lambda_{1,\beta,N}t}) f(\star)\\
& \, +   (P_t \mathsf{Q}_{\geq 2}(f-f(\star)))(y)\, ,
\end{align}
where $\gamma_{\beta,N}(y)$ is defined as 
\begin{equation}
 \gamma_{\beta,N}(y):=e_1(y)\left(\int_{\bar{B}_{\sqrt{N}\delta}}e_1 \, \mathrm{d}p_N \right) \, .
\end{equation} 
The identity~\eqref{eq:multiscaleidentity} then follows from~\eqref{eq:preidentity} and the fact that
\[
\E\left[f(\hat{Y}_t)|Y_0 \sim \nu_N\right]=\int_{\bar{B}_{\sqrt{N}\delta}} \E\left[f(\hat{Y}_t)|Y_0=y\right]\, \mathrm{d}\nu_N(y) \, .
\]
\medskip

ii) To prove the statements in the theorem, we first control the right hand side of~\eqref{eq:multiscaleidentity}. We now assume that $\nu_N$ is absolutely continuous with respect to $p_N$. We then have the following bound
\begin{align}
  &\int_{\bar{B}_{\sqrt{N}\delta}}(P_t \mathsf{Q}_{\geq 2}(f-f(\star)))\, \mathrm{d}\nu_N\\
  =& \, \int_{\bar{B}_{\sqrt{N}\delta}}(f-f(\star))P_t \mathsf{Q}_{\geq 2} \frac{\mathrm{d}\nu_N}{\mathrm{d}p_N}\, \mathrm{d}p_N \\
  \leq & e^{-\lambda_{2,\beta,N}t}\lVert f-f(\star) \rVert_{L^2(\bar{B}_{\sqrt{N}\delta};p_N)} \left \lVert \frac{\mathrm{d}\nu_N}{\mathrm{d}p_N} \right \rVert_{L^{2}(\bar{B}_{\sqrt{N}\delta};p_N)}\, 
\end{align} 
where in the last step we have applied the Cauchy--Schwartz inequality and have used the heat kernel representation in~\eqref{eq:heatkernel} along with the definition of the projection $\mathsf{Q}_{\geq 2}$ in~\eqref{eq:projection}. If we now take the a supremum over all $f \in C(B_{\star,\sqrt{N}\delta})$ with $|f|\leq 1$, we can apply the Riesz--Markov--Kakutani representation theorem (since $B_{\star,\sqrt{N}\delta}$ is a compact Hausdorff space) to argue that~\eqref{eq:TVbound} holds true. 

If we now choose $f\in \mathrm{Lip}_1(B_{\star,\sqrt{N}\delta})$, we have
\begin{align}
&\lVert f-f(\star) \rVert_{L^2(\bar{B}_{\sqrt{N}\delta};p_N)} \left \lVert \frac{\mathrm{d}\nu_N}{\mathrm{d}p_N} \right \rVert_{L^{2}(\bar{B}_{\sqrt{N}\delta};p_N)}\\
 \leq & \, e^{-\lambda_{2,\beta,N}t} \sup_{\bar{B}_{\sqrt{N}\delta}}d_\star(\cdot,\star) \left \lVert \frac{\mathrm{d}\nu_N}{\mathrm{d}p_N} \right \rVert_{L^{2}(\bar{B}_{\sqrt{N}\delta};p_N)} \\
 \leq & \, \sqrt{N}\delta e^{-\lambda_{2,\beta,N} t} \, ,
\end{align}
where we have used~\eqref{eq:heatkernel} again and $d_\star$ is as defined in~\eqref{eq:starmetric}. Using the dual formulation of the $W_1$ distance (see~\eqref{eq:W1}) we obtain~\eqref{eq:W1bound}.

\medskip

(iii) We now move on to the proof of~\eqref{eq:alphaconvergence}. 
To this end, we employ~\cite[Theorem 2.4]{DF78}, which tells us if the ODE
\begin{equation}
\dot{y}(t)=-\nabla U_N(y(t))\, ,\quad y(0)\in B_{\sqrt{N}\delta} 
\label{eq:ode}
\end{equation}
only has solutions which converge to $0$, then
\begin{equation}
e_1 \to 1 \, , \textrm{uniformly on compact subsets of } B_{\sqrt{N}\delta}.
\end{equation}
For any $0<\delta'<\delta$, we then have
\begin{align}
& \,\left|\int_{\bar{B}_{\sqrt{N}\delta}}(e_1-1) \, \mathrm{d}p_N\right| \\
\leq & \,  \left|\int_{\bar{B}_{\sqrt{N}\delta'}}(e_1 -1)\, \mathrm{d}p_N\right|  + \left|\int_{\bar{B}_{\sqrt{N}\delta'}\setminus \bar{B}_{\sqrt{N}\delta'}}(e_1-1) \, \mathrm{d}p_N\right|\\
\leq & \, \lVert e_1-1 \rVert_{L^\infty(\bar{B}_{\sqrt{N}\delta'})}p_N(\bar{B}_{\sqrt{N}\delta'}) + \left(\lVert e_1 \rVert_{L^2(\bar{B}_{\sqrt{N}\delta};p_N)}+1\right)p_N^{\frac12}(\bar{B}_{\sqrt{N}\delta}\setminus \bar{B}_{\sqrt{N}\delta'}) \\
=& \, \lVert e_1-1 \rVert_{L^\infty(\bar{B}_{\sqrt{N}\delta'})}p_N(\bar{B}_{\sqrt{N}\delta'}) + 2 p_N^{\frac12}(\bar{B}_{\sqrt{N}\delta}\setminus \bar{B}_{\sqrt{N}\delta'}) \, ,
\end{align}
where for the penultimate identity we have applied the Cauchy--Schwarz inequality for the second term. As $\beta \to \infty$, the first term on the right hand side goes to $0$  due to the convergence of $e_1$ while the second term goes to $0$ because $p_N$ converges weakly to $\delta_0$ due to the fact that $U_N$ has a unique minimum at $0$ (see~\cref{lemma:mincrti}). 
A similar argument can be applied to the term $\int_{\bar{B}_{\sqrt{N}\delta}}(e_1-1) \, \mathrm{d}\nu_N$ with the only difference being that we first choose $\delta'$ close to $\delta$ so that $\nu_N(\bar{B}_{\sqrt{N}\delta}\setminus \bar{B}_{\sqrt{N}\delta'})$ is arbitrarily small (this is possible since $\nu_N\ll p_N \ll \mathrm{d}y$) and then carry out the same argument as above.

Now, as shown in~\cref{lemma:mincrti} (see~\eqref{eq:1criticalpoint}), $-\langle\nabla U_N(y),y \rangle <0$ for all $y\neq 0$ and $\nabla U_N(0)=0$. Thus, the ODE~\eqref{eq:ode} only has solutions which converge to $0$. The result then follows.

\medskip

iv) Finally, we prove that the empirical measure of the droplet state, characterised by the QSD $q_N$, has a second moment which scales as $\beta^{-1}$. 
For $Y \sim q_N$, we compute 
\begin{align}
\mathbb{E}\left[\int_{\R^d}|x|^2 \, \mathrm{d}\mu^{(N),Y}\right]=&\, \frac{1}{N}\int_{\bar{B}_{\sqrt{N}\delta}} |y|^2 \, \mathrm{d}q_N \\
=& \, \frac{1}{N\displaystyle\int_{\bar{B}_{\sqrt{N}\delta}}e_1 \, \mathrm{d}p_N}\int_{\bar{B}_{\sqrt{N}\delta}} |y|^2 e_1 \, \mathrm{d}p_N \\
\leq & \, \frac{1}{N\displaystyle\int_{\bar{B}_{\sqrt{N}\delta}}e_1 \, \mathrm{d}p_N}\left(\int_{\bar{B}_{\sqrt{N}\delta}} |y|^4 \, \mathrm{d}p_N\right)^{\frac12} \, ,\label{eq:sizeestimatemiddle}
\end{align}
where we have applied the Cauchy--Schwarz inequality.  Let us now consider the integral in the numerator by itself:
\begin{align}
& \, \int_{\bar{B}_{\sqrt{N}\delta}} |y|^4 \, \mathrm{d}p_N\\
=&\, \frac{1}{\displaystyle\int_{\bar{B}_{\sqrt{N}\delta}}  e^{-\beta U_N(y)}\, \mathrm{d}y}\int_{\bar{B}_{\sqrt{N}\delta}}|y|^4  e^{-\beta U_N(y)}\, \mathrm{d}y \\
\leq & \,\frac{1}{\displaystyle\int_{\bar{B}_{\sqrt{N}\delta}}  e^{-\beta U_N(y)}\, \mathrm{d}y}\int_{\bar{B}_{\sqrt{N}\delta}}|y|^4  e^{-\beta c_w |y|^2}\, \mathrm{d}y \, ,
\end{align}
where in the last step we have used the result of~\cref{lemma:depth} and~\eqref{eq:quadraticlowerbound}. Rescaling using the map $\beta^{\frac12} y \mapsto y$, we have
\begin{align}
\, \int_{\bar{B}_{\sqrt{N}\delta}} |y|^4 \, \mathrm{d}p_N \leq \frac{1}{\beta^{2+\frac{d}{2}}\displaystyle\int_{\bar{B}_{\sqrt{N}\delta}}  e^{-\beta U_N(y)}\, \mathrm{d}y}\int_{\Gamma_N^\perp}|y|^4  e^{- c_w |y|^2}\, \mathrm{d}y \, .
\end{align}
Putting the above estimate together with~\eqref{eq:sizeestimatemiddle}, we obtain
\begin{equation}
\frac{\beta}{N}\int_{\bar{B}_{\sqrt{N}\delta}} |y|^2 \, \mathrm{d}q_N \leq \frac{1}{N\left(\displaystyle\int_{\bar{B}_{\sqrt{N}\delta}}e_1 \, \mathrm{d}p_N\right)\left(\beta^{\frac d2}\displaystyle\int_{\bar{B}_{\sqrt{N}\delta}}  e^{-\beta U_N(y)}\, \mathrm{d}y\right)^{\frac12}}\,.
\end{equation}
We have already shown that first integral in the denominator converges to $1$ as $\beta \to \infty$. For the second integral, we note that $\lim_{\beta \to \infty}\beta^{\frac d2}\displaystyle\int_{\bar{B}_{\sqrt{N}\delta}}  e^{-\beta U_N(y)}\, \mathrm{d}y  >0$ by a Laplace approximation argument and the fact that $U_N(0)=0$ is the unique minimum of $U_N$ in $\bar{B}_{\sqrt{N}\delta}$ and $D^2 U_N (0)>0$ (see~\cref{ass:W,lemma:mincrti}). Taking the limit superior, ~\eqref{eq:dropletsize} follows.
\end{proof}

\paragraph{\bf Acknowledgments}
ZPA and ME have been supported by Germany’s Excellence Strategy – The Berlin Mathematics Research Center MATH+ (EXC-2046/1, project ID:
390685689), via project EF45-5. ZPA additionally acknowledges support by the Saxony State Ministry of Science, and thanks both ScaDS.AI and the Max Planck Institute for Mathematics in the Sciences for hosting him during part of this research. 
Furthermore, ME thanks the DFG CRC 1114 and the Einstein Foundation for support. The research of RSG was partially supported by the DFG through the SPP 2410/1 ``Hyperbolic Balance Laws in Fluid Mechanics: Complexity, Scales, Randomness''.

\bibliographystyle{myalpha}
\bibliography{ref}

\newcommand{\etalchar}[1]{$^{#1}$}
\providecommand{\bysame}{\leavevmode\hbox to3em{\hrulefill}\thinspace}
\providecommand{\MR}{\relax\ifhmode\unskip\space\fi MR }
\providecommand{\MRhref}[2]{%
  \href{http://www.ams.org/mathscinet-getitem?mr=#1}{#2}
}
\providecommand{\href}[2]{#2}
\begin{thebibliography}{CLOMR24}

\bibitem[AB97]{AB97}
W.~Arendt and C.~J.~K. Batty, \emph{The spectral function and principal
  eigenvalues for {S}chr\"odinger operators}, Potential Anal. \textbf{7}
  (1997), no.~1, 415--436.

\bibitem[AD08]{AD08}
W.~Arendt and D.~Daners, \emph{Varying domains: stability of the {D}irichlet
  and the {P}oisson problem}, Discrete Contin. Dyn. Syst. \textbf{21} (2008),
  no.~1, 21--39.

\bibitem[AM25]{AM25}
Z.~P. Adams and J.~MacLaurin, \emph{The isochronal phase of stochastic {PDE}
  and integral equations: metastability and other properties}, J. Differential
  Equations \textbf{414} (2025), 773--816.

\bibitem[Bar57]{B57}
M.~S. Bartlett, \emph{On theoretical models for competitive and predatory
  biological systems}, Biometrika \textbf{44} (1957), 27--42.

\bibitem[Bar60]{B60}
\bysame, \emph{Stochastic population models in ecology and epidemiology},
  Methuen's Monographs on Applied Probability and Statistics, Methuen \& Co.,
  Ltd., London; John Wiley \& Sons, Inc., New York, 1960.

\bibitem[Bau14]{Bau14}
F.~Baudoin, \emph{Diffusion processes and stochastic calculus}, EMS Textbooks
  in Mathematics, European Mathematical Society (EMS), Z\"{u}rich, 2014.

\bibitem[BD06]{BD06}
M.~Biegert and D.~Daners, \emph{Local and global uniform convergence for
  elliptic problems on varying domains}, J. Differential Equations \textbf{223}
  (2006), no.~1, 1--32.

\bibitem[Big22]{LBG}
D.~A. Bignamini, \emph{Transition semigroups associated to nonlinear stochastic
  equations}, IRIS UniMore, Italy, 2022, Thesis (Ph.D.)--Universita` di Modena
  e Reggio Emilia (Italy).

\bibitem[CCL{\etalchar{+}}09]{C09}
P.~Cattiaux, P.~Collet, A.~Lambert, S.~Mart\'{\i}nez, S.~M\'{e}l\'{e}ard, and
  J.~San~Mart\'{\i}n, \emph{Quasi-stationary distributions and diffusion models
  in population dynamics}, Ann. Probab. \textbf{37} (2009), no.~5, 1926--1969.

\bibitem[CCY19]{CCY19}
J.~A. Carrillo, K.~Craig, and Y.~Yao, \emph{Aggregation-diffusion equations:
  dynamics, asymptotics, and singular limits}, Active particles. {V}ol. 2.
  {A}dvances in theory, models, and applications, Model. Simul. Sci. Eng.
  Technol., Birkh\"{a}user/Springer, Cham, 2019, pp.~65--108.

\bibitem[CDP19]{CDP19}
J.~A. Carrillo, M.~G. Delgadino, and F.~S. Patacchini, \emph{Existence of
  ground states for aggregation-diffusion equations}, Anal. Appl. (Singap.)
  \textbf{17} (2019), no.~3, 393--423.

\bibitem[CGPS20]{CGPS20}
J.~A. Carrillo, R.~S. Gvalani, G.~A. Pavliotis, and A.~Schlichting,
  \emph{Long-time behaviour and phase transitions for the {M}c{K}ean-{V}lasov
  equation on the torus}, Arch. Ration. Mech. Anal. \textbf{235} (2020), no.~1,
  635--690.

\bibitem[CLOMR24]{C024}
M.~M. Castro, J.~S.~W. Lamb, G.~Olic\'{o}n-M\'{e}ndez, and M.~Rasmussen,
  \emph{Existence and uniqueness of quasi-stationary and quasi-ergodic measures
  for absorbing {M}arkov chains: a {B}anach lattice approach}, Stochastic
  Process. Appl. \textbf{173} (2024), Paper No. 104364, 19.

\bibitem[CMSM13]{CMSM13}
P.~Collet, S.~Mart\'inez, and J.~San~Mart\'in, \emph{Quasi-stationary
  distributions}, Probability and its Applications (New York), Springer,
  Heidelberg, 2013, Markov chains, diffusions and dynamical systems.

\bibitem[CP90]{CP90}
J.~Carr and R.~Pego, \emph{Invariant manifolds for metastable patterns in
  {$u_t=\epsilon^2u_{xx}-f(u)$}}, Proc. Roy. Soc. Edinburgh Sect. A
  \textbf{116} (1990), no.~1-2, 133--160.

\bibitem[CP10]{CP10}
L.~Chayes and V.~Panferov, \emph{The {M}c{K}ean-{V}lasov equation in finite
  volume}, J. Stat. Phys. \textbf{138} (2010), no.~1-3, 351--380.

\bibitem[CV16]{CV16}
N.~Champagnat and D.~Villemonais, \emph{Exponential convergence to
  quasi-stationary distribution and {$Q$}-process}, Probab. Theory Related
  Fields \textbf{164} (2016), no.~1-2, 243--283.

\bibitem[CV21]{CV21}
\bysame, \emph{Lyapunov criteria for uniform convergence of conditional
  distributions of absorbed {M}arkov processes}, Stochastic Process. Appl.
  \textbf{135} (2021), 51--74.

\bibitem[CV23]{CV17a}
\bysame, \emph{General criteria for the study of quasi-stationarity}, Electron.
  J. Probab. \textbf{28} (2023), Paper No. 22, 84.

\bibitem[Dan08]{D08}
D.~Daners, \emph{Domain perturbation for linear and semi-linear boundary value
  problems}, Handbook of differential equations: stationary partial
  differential equations. {V}ol. {VI}, Handb. Differ. Equ.,
  Elsevier/North-Holland, Amsterdam, 2008, pp.~1--81.

\bibitem[DF78]{DF78}
A.~Devinatz and A.~Friedman, \emph{Asymptotic behavior of the principal
  eigenfunction for a singularly perturbed {D}irichlet problem}, Indiana Univ.
  Math. J. \textbf{27} (1978), no.~1, 143--157.

\bibitem[DMV18]{DMV18}
P.~Del~Moral and D.~Villemonais, \emph{Exponential mixing properties for time
  inhomogeneous diffusion processes with killing}, Bernoulli \textbf{24}
  (2018), no.~2, 1010--1032.

\bibitem[DPL10]{DPL10}
G.~Da~Prato and A.~Lunardi, \emph{On the {D}irichlet semigroup for
  {O}rnstein-{U}hlenbeck operators in subsets of {H}ilbert spaces}, J. Funct.
  Anal. \textbf{259} (2010), no.~10, 2642--2672.

\bibitem[EN00]{EN00}
K.-J. Engel and R.~Nagel, \emph{One-parameter semigroups for linear evolution
  equations}, Graduate Texts in Mathematics, vol. 194, Springer-Verlag, New
  York, 2000, With contributions by S. Brendle, M. Campiti, T. Hahn, G.
  Metafune, G. Nickel, D. Pallara, C. Perazzoli, A. Rhandi, S. Romanelli and R.
  Schnaubelt.

\bibitem[HK19]{HK19}
A.~Hening and M.~Kolb, \emph{Quasistationary distributions for one-dimensional
  diffusions with singular boundary points}, Stochastic Processes and their
  Applications \textbf{129} (2019), no.~5, 1659--1696.

\bibitem[HN05]{HN05}
B.~Helffer and F.~Nier, \emph{Hypoelliptic estimates and spectral theory for
  {F}okker-{P}lanck operators and {W}itten {L}aplacians}, Lecture Notes in
  Mathematics, vol. 1862, Springer-Verlag, Berlin, 2005.

\bibitem[HN06]{HN06}
\bysame, \emph{Quantitative analysis of metastability in reversible diffusion
  processes via a {W}itten complex approach: the case with boundary}, M\'{e}m.
  Soc. Math. Fr. (N.S.) (2006), no.~105, vi+89.

\bibitem[Kat95]{K13}
T.~Kato, \emph{Perturbation theory for linear operators}, Classics in
  Mathematics, Springer-Verlag, Berlin, 1995, Reprint of the 1980 edition.

\bibitem[Mat95]{M95}
P.~Mathieu, \emph{Spectra, exit times and long time asymptotics in the
  zero-white-noise limit}, Stochastics Stochastics Rep. \textbf{55} (1995),
  no.~1-2, 1--20.

\bibitem[MV12]{MV2012}
S.~M\'{e}l\'{e}ard and D.~Villemonais, \emph{Quasi-stationary distributions and
  population processes}, Probab. Surv. \textbf{9} (2012), 340--410.

\bibitem[Nas58]{Nash1958}
J.~Nash, \emph{Continuity of solutions of parabolic and elliptic equations},
  Amer. J. Math. \textbf{80} (1958), 931--954.

\bibitem[OR07]{OR07}
F.~Otto and M.~G. Reznikoff, \emph{Slow motion of gradient flows}, J.
  Differential Equations \textbf{237} (2007), no.~2, 372--420.

\bibitem[Pin85]{P85}
R.~G. Pinsky, \emph{On the convergence of diffusion processes conditioned to
  remain in a bounded region for large time to limiting positive recurrent
  diffusion processes}, Ann. Probab. \textbf{13} (1985), no.~2, 363--378.

\bibitem[RS75]{RS75}
M.~Reed and B.~Simon, \emph{Methods of modern mathematical physics. {II}.
  {F}ourier analysis, self-adjointness}, Academic Press [Harcourt Brace
  Jovanovich, Publishers], New York-London, 1975.

\bibitem[RS80]{RS1}
\bysame, \emph{Methods of modern mathematical physics. {I}}, second ed.,
  Academic Press, Inc. [Harcourt Brace Jovanovich, Publishers], New York, 1980,
  Functional analysis.

\bibitem[Sch12]{S12}
K.~Schm\"udgen, \emph{Unbounded self-adjoint operators on {H}ilbert space},
  Graduate Texts in Mathematics, vol. 265, Springer, Dordrecht, 2012.

\bibitem[Sim82]{S82}
B.~Simon, \emph{Schrödinger semigroups}, Bull. Amer. Math. Soc. (N.S.)
  \textbf{7} (1982), no.~3, 447 -- 526.

\bibitem[Sim83]{Simon1983}
\bysame, \emph{Semiclassical analysis of low lying eigenvalues. {I}.
  {N}ondegenerate minima: asymptotic expansions}, Ann. Inst. H. Poincar\'{e}
  Sect. A (N.S.) \textbf{38} (1983), no.~3, 295--308.

\bibitem[Sim09]{Sim09}
\bysame, \emph{Schr\"{o}dinger operators with purely discrete spectrum},
  Methods Funct. Anal. Topology \textbf{15} (2009), no.~1, 61--66.

\bibitem[Szn91]{Szn91}
A.-S. Sznitman, \emph{Topics in propagation of chaos}, \'{E}cole d'\'{E}t\'{e}
  de {P}robabilit\'{e}s de {S}aint-{F}lour {XIX}---1989, Lecture Notes in
  Math., vol. 1464, Springer, Berlin, 1991, pp.~165--251.

\bibitem[vDP13]{VDP13}
E.~A. van Doorn and P.~K. Pollett, \emph{Quasi-stationary distributions for
  discrete-state models}, European J. Oper. Res. \textbf{230} (2013), no.~1,
  1--14.

\bibitem[Wit82]{W82}
E.~Witten, \emph{Supersymmetry and {M}orse theory}, J. Differential Geometry
  \textbf{17} (1982), no.~4, 661--692 (1983).

\bibitem[Wri31]{W31}
S.~Wright, \emph{Evolution in mendelian populations}, Genetics \textbf{16}
  (1931), no.~2, 97--159.

\bibitem[Wu98]{L98}
L.~Wu, \emph{Uniqueness of {S}chr\"{o}dinger operators restricted in a domain},
  J. Funct. Anal. \textbf{153} (1998), no.~2, 276--319.

\bibitem[Yag47]{Y47}
A.~M. Yaglom, \emph{Certain limit theorems of the theory of branching random
  processes}, Doklady Akad. Nauk SSSR (N.S.) \textbf{56} (1947), 795--798.

\bibitem[ZLS14]{ZLS14}
J.~Zhang, S.~Li, and R.~Song, \emph{Quasi-stationarity and quasi-ergodicity of
  general {M}arkov processes}, Sci. China Math. \textbf{57} (2014), no.~10,
  2013--2024.

\end{thebibliography}

\end{document}